\newcommand{\zed}{\mathbb{Z}}
\newcommand{\C}{\mathbb{C}}
\newcommand{\qb}[2]{\genfrac{[}{]}{0pt}{}{#1}{#2}}
\newcommand{\rot}{\mathrm{rot}}
\theoremstyle{plain}
\newtheorem{theorem}{Theorem}[section]
\newtheorem{lemma}[theorem]{Lemma}
\newtheorem{corollary}[theorem]{Corollary}
\theoremstyle{definition}
\newtheorem{definition}[theorem]{Definition}
\newtheorem{question}[theorem]{Question}
\theoremstyle{remark}
\newtheorem{remark}[theorem]{Remark}
\numberwithin{equation}{section}
\begin{document}

\title[Kauffman-Vogel and Murakami-Ohtsuki-Yamada Polynomials]{On the Kauffman-Vogel and the Murakami-Ohtsuki-Yamada Graph Polynomials}

\author{Hao Wu}

\address{Department of Mathematics, The George Washington University, Monroe Hall, Room 240, 2115 G Street, NW, Washington DC 20052}

\email{haowu@gwu.edu}

\subjclass[2000]{Primary 57M25}

\keywords{Kauffman polynomial, HOMFLY-PT polynomial}

\begin{abstract}
This paper consists of three parts.

First, we generalize the Jaeger Formula to express the Kauffman-Vogel graph polynomial as a state sum of the Murakami-Ohtsuki-Yamada graph polynomial. 

Then, we demonstrate that reversing the orientation and the color of a MOY graph along a simple circuit does not change the $\mathfrak{sl}(N)$ Murakami-Ohtsuki-Yamada polynomial or the $\mathfrak{sl}(N)$ homology of this MOY graph. In fact, reversing the orientation and the color of a component of a colored link only changes the $\mathfrak{sl}(N)$ homology by an overall grading shift. 

Finally, as an application of the first two parts, we prove that the $\mathfrak{so}(6)$ Kauffman polynomial is equal to the $2$-colored $\mathfrak{sl}(4)$ Reshetikhin-Turaev link polynomial, which implies that the $2$-colored $\mathfrak{sl}(4)$ link homology categorifies the $\mathfrak{so}(6)$ Kauffman polynomial.
\end{abstract}

\maketitle

\tableofcontents

\section{The Jaeger Formula of the Kauffman-Vogel Polynomial}\label{sec-Jaeger}

\subsection{The Kauffman and the HOMFLY-PT link polynomials} The Kauffman polynomial $P(K)(q,a)$ defined in \cite{Kauffman} is an invariant of unoriented framed link in $S^3$. Here, we use the following normalization of the Kauffman polynomial.
\begin{equation}\label{Kauffman-skein}
\begin{cases}
P(\setlength{\unitlength}{.75pt}
\begin{picture}(20,20)(-10,7)
\put(0,10){\circle{15}}
\end{picture}) = \frac{a-a^{-1}}{q-q^{-1}} +1 \\
P(\setlength{\unitlength}{.75pt}
\begin{picture}(20,20)(-10,7)
\put(-10,0){\line(1,1){20}}

\put(-2,12){\line(-1,1){8}}

\put(2,8){\line(1,-1){8}}

\end{picture}) - P(\setlength{\unitlength}{.75pt}
\begin{picture}(20,20)(-10,7)
\put(10,0){\line(-1,1){20}}

\put(2,12){\line(1,1){8}}

\put(-2,8){\line(-1,-1){8}}

\end{picture}) = (q-q^{-1})(P(\setlength{\unitlength}{.75pt}
\begin{picture}(20,20)(-10,7)
\qbezier(-10,0)(0,10)(-10,20)

\qbezier(10,0)(0,10)(10,20)
\end{picture}) - P(\setlength{\unitlength}{.75pt}
\begin{picture}(20,20)(-10,7)
\qbezier(-10,0)(0,10)(10,0)

\qbezier(-10,20)(0,10)(10,20)
\end{picture})) \\
P(\setlength{\unitlength}{.75pt}
\begin{picture}(20,20)(-10,7)
\put(-10,0){\line(1,1){12}}

\put(-2,12){\line(-1,1){8}}

\qbezier(2,12)(10,20)(10,10)

\qbezier(2,8)(10,0)(10,10)
\end{picture}) = a P(\setlength{\unitlength}{.75pt}
\begin{picture}(15,20)(-10,7)
\qbezier(-10,0)(10,10)(-10,20)
\end{picture})
\end{cases}
\end{equation}
The $\mathfrak{so}(N)$ Kauffman polynomial $P_{N}(K)(q)$ is defined to be the specialization 
\begin{equation}\label{Kauffman-N-def}
P_{N}(K)(q)= P(K)(q,q^{N-1}).
\end{equation}

The HOMFLY-PT polynomial $R(K)(q,a)$ defined in \cite{HOMFLY,PT} is an invariant of oriented framed link in $S^3$. Here, we use the following normalization of the HOMFLY-PT polynomial.
\begin{equation}\label{HOMFLY-skein}
\begin{cases}
R() = \frac{a-a^{-1}}{q-q^{-1}} \\
R(\setlength{\unitlength}{.75pt}
\begin{picture}(20,20)(-10,7)
\put(-10,0){\vector(1,1){20}}

\put(-2,12){\vector(-1,1){8}}

\put(2,8){\line(1,-1){8}}

\end{picture}) - R(\setlength{\unitlength}{.75pt}
\begin{picture}(20,20)(-10,7)
\put(10,0){\vector(-1,1){20}}

\put(2,12){\vector(1,1){8}}

\put(-2,8){\line(-1,-1){8}}

\end{picture}) = (q-q^{-1})R(\setlength{\unitlength}{.75pt}
\begin{picture}(20,20)(-10,7)
\put(-10,20){\vector(-1,1){0}}

\put(10,20){\vector(1,1){0}}

\qbezier(-10,0)(0,10)(-10,20)

\qbezier(10,0)(0,10)(10,20)
\end{picture}) \\
R() = a R()
\end{cases}
\end{equation}
The $\mathfrak{sl}(N)$ HOMFLY-PT polynomial $R_{N}(K)(q)$ is defined to be the specialization 
\begin{equation}\label{HOMFLY-N-def}
R_{N}(K)(q)= R(K)(q,q^{N}).
\end{equation}

It is easy to renormalize $P(K)(q,a)$ and $R(K)(q,a)$ to make them invariant under Reidemeister move (I) too. 

\subsection{The Jaeger Formula} The Jaeger Formula can be found in, for example, \cite{Ferrand, Kauffman-book}. Here, we give it a slightly different formulation.

Given an unoriented link diagram $D$, we call a segment of the link between two adjacent crossings an edge of this diagram $D$. An edge orientation of $D$ is an orientation of all the edges of $D$. We say that an edge orientation of $D$ is balanced if, at every crossing, two edges point inward and two edges point outward. Up to rotation, there are four possible balanced edge orientations near a crossing. (See Figure \ref{balanced-orientation-crossing-fig}.)

\begin{figure}[ht]
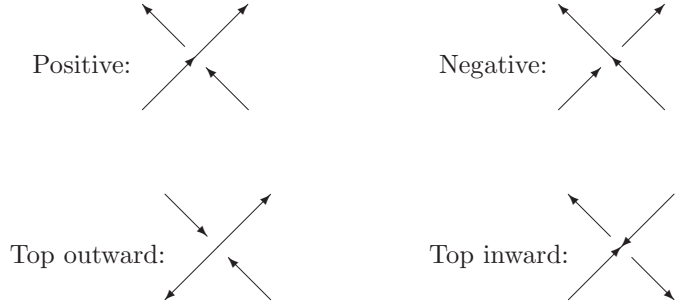

\[
\xymatrix{
\text{Positive: }\input{boc-+} && \text{Negative: }\input{boc--} \\
\text{Top outward:} \input{boc-out} && \text{Top inward:} \input{boc-in}
}
\]
\caption{Balanced edge orientations near a crossing}\label{balanced-orientation-crossing-fig} 

\end{figure}

Denote by $\widetilde{\mathcal{O}}(D)$ the set of all balanced edge orientations of $D$. Equipping $D$ with $\varrho \in \widetilde{\mathcal{O}}(D)$, we get an edge-oriented diagram $D_\varrho$. We say that $\varrho$ is admissible if $D_\varrho$ does not contain a top inward crossing. We denote by $\mathcal{O}(D)$ the subset of $\widetilde{\mathcal{O}}(D)$ consisting of all admissible balanced edge orientations of $D$.

\begin{figure}[ht]
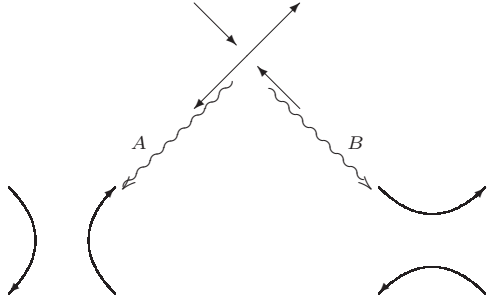

\[
\xymatrix{
& \input{boc-out} \ar@{~>}[dl]_{A} \ar@{~>}[dr]^{B} & \\
\input{res-NS-out} && \input{res-EW-out}
}
\]
\caption{Resolutions of a top outward crossing}\label{res-top-out-fig} 

\end{figure}

For $\varrho \in \mathcal{O}(D)$, we allow the two resolutions in Figure \ref{res-top-out-fig} at each top outward crossing of $D_\varrho$. A resolution $\varsigma$ of $D_\varrho$ is a choice of $A$ or $B$ resolution of every top outward crossing of $D_\varrho$. Denote by $\Sigma(D_\varrho)$ the set of all resolutions of $D_\varrho$. 

For each $\varsigma \in \Sigma(D_\varrho)$ and each top outward crossing $c$ of $D_\varrho$, we define a local weight 
\begin{equation}\label{local-weight-crossing-Jaeger}
[D_\varrho,\varsigma;c]= \begin{cases}
q-q^{-1} & \text{if } \varsigma \text{ applies } A \text{ to } c, \\
-q+q^{-1} & \text{if } \varsigma \text{ applies } B \text{ to } c.
\end{cases}
\end{equation}
The total weight $[D_\varrho,\varsigma]$ of the resolution $\varsigma$ is defined to be
\begin{equation}\label{weight-link-Jaeger}
[D_\varrho,\varsigma]= \prod_c [D_\varrho,\varsigma;c],
\end{equation}
where $c$ runs through all top outward crossings of $D_\varrho$.

For $\varsigma \in \Sigma(D_\varrho)$, denote by $D_{\varrho,\varsigma}$ the oriented link diagram (in the usual sense) obtained by applying $\varsigma$ to $D_\varrho$. As an immersed curve in $\mathbb{R}^2$, $D_{\varrho,\varsigma}$ has a rotation number $\rot(D_{\varrho,\varsigma})$ (which is also known as the Whitney index or the degree of the Gauss map.)

The following is our formulation of the Jaeger Formula, which is easily shown to be equivalent to the Jaeger Formula given in \cite{Ferrand, Kauffman-book}.

\begin{equation}\label{eq-Jaeger-formula}
P(D)(q,a^2q^{-1}) = \sum_{\varrho \in \mathcal{O}(D)} \sum_{\varsigma \in \Sigma(D_\varrho)} (a^{-1}q)^{\rot(D_{\varrho,\varsigma})} [D_\varrho,\varsigma] R(D_{\varrho,\varsigma})(q,a).
\end{equation}

Plugging $a=q^N$ into the above formula, we get
\begin{equation}\label{eq-Jaeger-formula-N}
P_{2N}(D)(q) = \sum_{\varrho \in \mathcal{O}(D)} \sum_{\varsigma \in \Sigma(D_\varrho)} q^{-(N-1)\rot(D_{\varrho,\varsigma})} [D_\varrho,\varsigma] R_N(D_{\varrho,\varsigma})(q).
\end{equation}

\subsection{The Kauffman-Vogel polynomial, the Murakami-Ohtsuki-Yamada polynomial and the Jaeger Formula}\label{subsec-Jaeger-graph} The first objective of the present paper is to generalize the Jaeger Formula \eqref{eq-Jaeger-formula} to express the Kauffman-Vogel polynomial as a state sum in terms of the Murakami-Ohtsuki-Yamada polynomial of (uncolored) oriented knotted $4$-valent graphs.

A knotted $4$-valent graph is an immersion of an abstract $4$-valent graph into $\mathbb{R}^2$ whose only singularities are finitely many crossings away from vertices. Here, a crossing is a transversal double point with one intersecting branch specified as upper and the other as lower. Two knotted $4$-valent graph are equivalent if they are isotopic to each other via a rigid vertex isotopy. (See \cite[Section 1]{KV} for the definition of rigid vertex isotopies.)

\begin{figure}[ht]
\[
\input{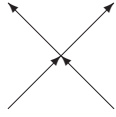}
\]
\caption{Vertex of an oriented knotted $4$-valent graph}\label{oriented-vertex-fig}

\end{figure} 

We say that a knotted $4$-valent graph is oriented if the underlying abstract $4$-valent graph is oriented in such a way that, up to rotation, very vertex in the knotted $4$-valent graph looks like the one in Figure \ref{oriented-vertex-fig}. We say that a knotted $4$-valent graph is unoriented if the underlying abstract $4$-valent graph is unoriented. Note that some orientations of the underlying abstract $4$-valent graph do not give rise to orientations of the knotted $4$-valent graph.

The Kauffman-Vogel polynomial $P(D)(q,a)$ defined in \cite{KV} is an invariant of unoriented knotted $4$-valent graphs under regular rigid vertex isotopy. It is defined by the skein relations \eqref{Kauffman-skein} of the Kauffman polynomial plus the following additional relation.
\begin{equation}\label{Kauffman-skein-vertex}
P(\setlength{\unitlength}{.75pt}
\begin{picture}(20,20)(-10,7)
\put(0,10){\line(1,1){10}}

\put(-10,0){\line(1,1){10}}

\put(0,10){\line(-1,1){10}}

\put(10,0){\line(-1,1){10}}

\end{picture})= - P() + q P() +q^{-1} P() = - P() + q^{-1} P() + q P().
\end{equation}
The $\mathfrak{so}(N)$ Kauffman-Vogel polynomial $P_{N}(D)(q)$ is defined to be the specialization 
\begin{equation}\label{Kauffman-Vogel-2N-def}
P_{N}(D)(q)= P(D)(q,q^{N-1}).
\end{equation}

The Murakami-Ohtsuki-Yamada polynomial\footnote{For oriented knotted $4$-valent graphs, the Murakami-Ohtsuki-Yamada polynomial was first defined by Kauffman and Vogel \cite{KV}. Murakami, Ohtsuki and Yamada \cite{MOY} generalized it to knotted MOY graphs and used it to recover the Reshetikhin-Turaev $\mathfrak{sl}(N)$ polynomial of links colored by wedge powers of the defining representation.} $R(D)(q,a)$ of oriented knotted $4$-valent graphs is an invariant under regular rigid vertex isotopy. It is defined by the skein relations \eqref{HOMFLY-skein} of the HOMFLY-PT polynomial plus the following additional relation.
\begin{equation}\label{HOMFLY-skein-vertex}
R(\setlength{\unitlength}{.75pt}
\begin{picture}(20,20)(-10,7)
\put(0,10){\vector(1,1){10}}

\put(-10,0){\vector(1,1){10}}

\put(0,10){\vector(-1,1){10}}

\put(10,0){\vector(-1,1){10}}

\end{picture}) = - R() + q R() = - R() + q^{-1} R().
\end{equation}
The $\mathfrak{sl}(N)$ Murakami-Ohtsuki-Yamada polynomial $R_{N}(D)(q)$ is defined to be the specialization 
\begin{equation}\label{MOY-uncolored-N-def}
R_{N}(D)(q)= R(D)(q,q^{N}).
\end{equation}

From now on, we will refer to the Kauffman-Vogel polynomial as the KV polynomial and the Murakami-Ohtsuki-Yamada polynomial as the MOY polynomial.

Given a knotted $4$-valent graph $D$, we call a segment of $D$ between two adjacent vertices or crossings an edge. (An edge can have a crossing and a vertex as its end points. Note that an edge of the underlying abstract $4$-valent graph may be divided into several edges in $D$ by crossings.) An edge orientation of $D$ is an orientation of all the edges of $D$. We say that an edge orientation of $D$ is balanced if, at every crossing and every vertex, two edges point inward and two edges point outward. As before, up to rotation, there are four possible balanced edge orientations near a crossing. (See Figure \ref{balanced-orientation-crossing-fig}.) Up to rotation, there are two possible balanced edge orientations near a vertex. (See Figure \ref{balanced-orientation-vertex-fig}.)

\begin{figure}[ht]
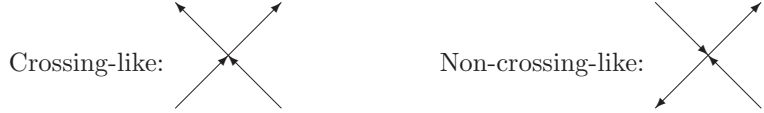

\[
\xymatrix{
\text{Crossing-like: }\input{vertex-oriented} && \text{Non-crossing-like: }\input{vertex-non-crossing} 
}
\]
\caption{Balanced edge orientations near a vertex}\label{balanced-orientation-vertex-fig} 

\end{figure}

Denote by $\widetilde{\mathcal{O}}(D)$ the set of all balanced edge orientations of $D$. Equipping $D$ with $\varrho \in \widetilde{\mathcal{O}}(D)$, we get an edge-oriented diagram $D_\varrho$. We say that $\varrho$ is admissible if $D_\varrho$ does not contain a top inward crossing. We denote by $\mathcal{O}(D)$ the subset of $\widetilde{\mathcal{O}}(D)$ consisting of all admissible balanced edge orientations of $D$.

For $\varrho \in \mathcal{O}(D)$, we allow the two resolutions in Figure \ref{res-top-out-fig} at each top outward crossing of $D_\varrho$ and the two resolutions in Figure \ref{res-non-crossing-vertex-fig} at each non-crossing-like vertex. A resolution $\varsigma$ of $D_\varrho$ is a choice of $A$ or $B$ resolution of every top outward crossing of $D_\varrho$ and $L$ or $R$ resolution of every non-crossing-like vertex. Denote by $\Sigma(D_\varrho)$ the set of all resolutions of $D_\varrho$. 

\begin{figure}[ht]
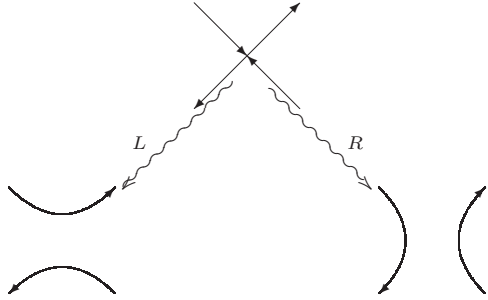

\[
\xymatrix{
& \input{vertex-non-crossing}  \ar@{~>}[dl]_{L} \ar@{~>}[dr]^{R} & \\
 \input{res-EW-out} && \input{res-NS-out}
}
\]
\caption{Resolutions of a non-crossing-like vertex}\label{res-non-crossing-vertex-fig} 

\end{figure}

Fix a $\varsigma \in \Sigma(D_\varrho)$. For each top outward crossing $c$ of $D_\varrho$, the local weight $[D_\varrho,\varsigma;c]$ is defined as in \eqref{local-weight-crossing-Jaeger}. For each non-crossing-like vertex $v$, we define a local weight $[D_\varrho,\varsigma;v]$ by the following equation.
\begin{equation}\label{local-weight-vertex-Jaeger}
[D_\varrho,\varsigma;v]= \begin{cases}
q & \text{if } \varsigma \text{ applies } L \text{ to } v, \\
q^{-1} & \text{if } \varsigma \text{ applies } R \text{ to } v.
\end{cases}
\end{equation}
The total weight $[D_\varrho,\varsigma]$ of the resolution $\varsigma$ is defined to be
\begin{equation}\label{weight-graph-Jaeger}
[D_\varrho,\varsigma]= \left(\prod_c [D_\varrho,\varsigma;c]\right) \cdot \left(\prod_v [D_\varrho,\varsigma;v]\right),
\end{equation}
where $c$ runs through all top outward crossings of $D_\varrho$ and $v$ runs through all non-crossing-like vertices of $D_\varrho$. 

The following theorem is our generalization of the Jaeger Formula to knotted $4$-valent graphs.

\begin{theorem}\label{thm-Jaeger-formula-graph}
\begin{equation}\label{eq-Jaeger-formula-graph}
P(D)(q,a^2q^{-1}) = \sum_{\varrho \in \mathcal{O}(D)} \sum_{\varsigma \in \Sigma(D_\varrho)} (a^{-1}q)^{\rot(D_{\varrho,\varsigma})} [D_\varrho,\varsigma] R(D_{\varrho,\varsigma})(q,a).
\end{equation}

Consequently, for $N\geq 1$,
\begin{equation}\label{eq-Jaeger-formula-N-graph}
P_{2N}(D)(q) = \sum_{\varrho \in \mathcal{O}(D)} \sum_{\varsigma \in \Sigma(D_\varrho)} q^{-(N-1)\rot(D_{\varrho,\varsigma})} [D_\varrho,\varsigma] R_N(D_{\varrho,\varsigma})(q).
\end{equation}
\end{theorem}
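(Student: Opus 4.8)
The plan is to deduce the second identity \eqref{eq-Jaeger-formula-N-graph} from the first by a specialization, and to prove \eqref{eq-Jaeger-formula-graph} by induction on the number of vertices of $D$. The specialization is immediate: putting $a = q^N$ in \eqref{eq-Jaeger-formula-graph} turns the left-hand side into $P(D)(q,q^{2N-1}) = P_{2N}(D)(q)$ by \eqref{Kauffman-Vogel-2N-def}, turns each weight $(a^{-1}q)^{\rot(D_{\varrho,\varsigma})}$ into $q^{-(N-1)\rot(D_{\varrho,\varsigma})}$, leaves the combinatorial weights $[D_\varrho,\varsigma]$ untouched, and turns $R(D_{\varrho,\varsigma})(q,a)$ into $R_N(D_{\varrho,\varsigma})(q)$ by \eqref{MOY-uncolored-N-def}. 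For the main identity \eqref{eq-Jaeger-formula-graph}, write $J(D)$ for its right-hand side. When $D$ has no vertex it is an unoriented link diagram, the objects $\mathcal{O}(D)$, $\Sigma(D_\varrho)$ and $[D_\varrho,\varsigma]$ reduce to the ones in the link version, every $D_{\varrho,\varsigma}$ is a genuine oriented link diagram, and \eqref{eq-Jaeger-formula-graph} is exactly the Jaeger Formula \eqref{eq-Jaeger-formula}; this is the base case. For the induction step, I pick a vertex $v$ of $D$ and let $D_{\times}$, $D_0$, $D_\infty$ be obtained from $D$ by replacing $v$ with, respectively, the crossing and the two planar smoothings appearing in \eqref{Kauffman-skein-vertex}, each of which has one fewer vertex. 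By the induction hypothesis $J$ agrees with $P(\,\cdot\,)(q,a^2q^{-1})$ on $D_{\times}$, $D_0$, $D_\infty$; since the coefficients $-1$, $q$, $q^{-1}$ of \eqref{Kauffman-skein-vertex} do not involve $a$, the function $P(\,\cdot\,)(q,a^2q^{-1})$ obeys that relation as well, so it suffices to show that $J$ also obeys \eqref{Kauffman-skein-vertex}. As the four diagrams agree outside a small disk about $v$, this is a local identity.

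To prove the local identity, I would fix an orientation and a resolution of the common exterior; this forces the edge orientation at the four edge-ends meeting $v$. If that local pattern is not ``two in, two out'', none of the four diagrams admits a compatible balanced edge orientation and all four contributions vanish; otherwise it is either crossing-like or non-crossing-like (Figures \ref{balanced-orientation-crossing-fig} and \ref{balanced-orientation-vertex-fig}). In the crossing-like case, $v$ is a genuine oriented $4$-valent vertex of $D_{\varrho,\varsigma}$, the crossing of $D_{\times}$ at $v$ is consistently oriented (positive or negative), exactly one of $D_0$, $D_\infty$ is compatible with the pattern — namely the one whose smoothing is the oriented Seifert smoothing of that crossing — and the other contributes $0$; moreover the vertex at $v$, the crossing at $v$, and that compatible smoothing all share the same oriented Seifert smoothing, so the three relevant resolved diagrams have equal rotation numbers and equal weights $[D_\varrho,\varsigma]$ (no new top-outward crossing or non-crossing-like vertex is produced at $v$). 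Cancelling the common factor $(a^{-1}q)^{\rot(D_{\varrho,\varsigma})}\,[D_\varrho,\varsigma]$, the identity becomes precisely the appropriate form of the MOY vertex relation \eqref{HOMFLY-skein-vertex}. In the non-crossing-like case, the vertex of $D$ at $v$ must be resolved, and its two resolutions yield exactly the two local pictures present in $D_0$ and $D_\infty$; meanwhile the crossing of $D_{\times}$ at $v$ is either top-inward — making the edge orientation inadmissible, so $D_{\times}$ contributes $0$ — or top-outward, whose two resolutions again yield those same two pictures. Since any two of these diagrams carrying the same picture at $v$ coincide as oriented diagrams, they have equal rotation numbers and equal MOY polynomials, so the identity collapses to a short numerical check balancing the vertex weights $q$, $q^{-1}$ of \eqref{local-weight-vertex-Jaeger}, the top-outward weights $q-q^{-1}$, $-q+q^{-1}$ of \eqref{local-weight-crossing-Jaeger}, and the coefficients of \eqref{Kauffman-skein-vertex}. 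This completes the induction and the proof.

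The step I expect to be the main obstacle is this local verification. One must treat both types of edge-orientation pattern at $v$ and, within each, track the sign and figure conventions carefully — which crossing is positive, which resolution of a top-outward crossing is $A$ or $B$ (Figure \ref{res-top-out-fig}), which resolution of a non-crossing-like vertex is $L$ or $R$ (Figure \ref{res-non-crossing-vertex-fig}) — so that the coefficients match up. The conceptual point that makes it work is that replacing a vertex, a crossing, and the corresponding oriented smoothing at one spot leaves the Seifert smoothing of the underlying immersed curve, and hence the rotation number, unchanged, while the rotation-number shift of the competing smoothing is exactly absorbed by the $(q-q^{-1})$-weights attached to the top-outward resolutions. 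Once the conventions are pinned down, each case is a routine computation.
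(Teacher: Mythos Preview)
Your proposal is correct and follows essentially the same approach as the paper's proof: induction on the number of vertices, with the base case being the link-level Jaeger Formula \eqref{eq-Jaeger-formula}, and the inductive step reduced to checking that the state sum $J(D)$ obeys the Kauffman--Vogel vertex relation \eqref{Kauffman-skein-vertex}, carried out by a local case analysis on the balanced edge-orientation pattern at the chosen vertex. The paper organizes this analysis by partitioning $\mathcal{O}(D)$, $\mathcal{O}(\widehat{D})$, $\mathcal{O}(D^A)$, $\mathcal{O}(D^B)$ into subsets indexed by the six possible orientation patterns at $v$ (four crossing-like rotations and two non-crossing-like rotations) and writing down explicit bijections between them, whereas you phrase it as fixing the exterior data first; the content is the same, and in each case the crossing-like patterns are handled via the MOY relation \eqref{HOMFLY-skein-vertex} while the non-crossing-like patterns reduce to the numerical identity $q^{\pm 1} = -(\pm(q-q^{-1})) + q^{\pm 1}\cdot 1$ among the local weights.
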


\begin{remark}
Murakami, Ohtsuki and Yamada \cite{MOY} established a state sum formula for the $\mathfrak{sl}(N)$ MOY polynomial $R_N$. Combining that with \eqref{eq-Jaeger-formula-N-graph}, we get a state sum formula for the $\mathfrak{so}(2N)$ KV polynomial. Specially, note that the $\mathfrak{sl}(1)$ MOY polynomial of a $4$-valent graph $D$ \textbf{embedded} in $\mathbb{R}^2$ is simply given by
\begin{equation}\label{eq-sl-1}
R_1(D) = \begin{cases}
1 & \text{if } D \text{ has no vertex}, \\
0 & \text{otherwise}.
\end{cases}
\end{equation}
Using \eqref{eq-sl-1} and \eqref{eq-Jaeger-formula-N-graph}, it is straightforward to recover the formula of the $\mathfrak{so}(2)$ KV polynomial of a planar $4$-valent graph given by Carpentier \cite[Theorem 4]{Carpentier1} and Caprau, Tipton \cite[Theorem 4]{Caprau-Tipton}. 

We would also like to point out that the concept of balanced edge orientation is implicit in \cite{Carpentier2}, in which Carpentier gave an alternative proof of \cite[Theorem 4]{Carpentier1}.
\end{remark}

\begin{proof}[Proof of Theorem \ref{thm-Jaeger-formula-graph}]
We prove Theorem \ref{thm-Jaeger-formula-graph} by inducting on the number of vertices in $D$. The proof comes down to a straightforward but rather lengthy tabulation of all admissible balanced edge orientations of $D$.

If $D$ contains no vertex, then \eqref{eq-Jaeger-formula-graph} becomes \eqref{eq-Jaeger-formula}, which is known to be true. Assume that \eqref{eq-Jaeger-formula-graph} is true if $D$ has at most $n-1$ vertices. Now let $D$ be a knotted $4$-valent graph with $n$ vertices. 

\begin{figure}[ht]
\[
\xymatrix{
&& \input{vertex-unoriented-D} \ar@{~>}[d] \ar@{~>}[dll] \ar@{~>}[drr] && \\
\input{cross-NE-hatD} && \input{res-NS-DA} && \input{res-EW-DB}
}
\]
\caption{}\label{D-hatD-DA-DB-fig}

\end{figure}

Choose a vertex $v$ of $D$. Define $\widehat{D}$, $D^A$ and $D^B$ to be the knotted $4$-valent graphs obtained from $D$ by replacing $v$ by the local configurations in Figure \ref{D-hatD-DA-DB-fig}. By skein relation \eqref{Kauffman-skein-vertex}, we have 
\begin{equation}\label{eq-D-hatD-DA-DB}
P(D)=-P(\widehat{D})+q P(D^A) +q^{-1} P(D^B).
\end{equation}
Note that each of $\widehat{D}$, $D^A$ and $D^B$ has only $n-1$ vertices. So \eqref{eq-Jaeger-formula-graph} is true for $\widehat{D}$, $D^A$ and $D^B$. Thus, by \eqref{eq-D-hatD-DA-DB}, to prove \eqref{eq-Jaeger-formula-graph} for $D$, we only need to check that 
\begin{eqnarray}
\label{eq-Jaeger-D-hatD-DA-DB} && \sum_{\varrho \in \mathcal{O}(D)} \sum_{\varsigma \in \Sigma(D_\varrho)} (a^{-1}q)^{\rot(D_{\varrho,\varsigma})} [D_\varrho,\varsigma] R(D_{\varrho,\varsigma})(q,a)\\
& = &  -\sum_{\varrho \in \mathcal{O}(\widehat{D})} \sum_{\varsigma \in \Sigma(\widehat{D}_\varrho)} (a^{-1}q)^{\rot(\widehat{D}_{\varrho,\varsigma})} [\widehat{D}_\varrho,\varsigma] R(\widehat{D}_{\varrho,\varsigma})(q,a) \nonumber \\
&& + q \sum_{\varrho \in \mathcal{O}(D^A)} \sum_{\varsigma \in \Sigma(D^A_\varrho)} (a^{-1}q)^{\rot(D^A_{\varrho,\varsigma})} [D^A_\varrho,\varsigma] R(D^A_{\varrho,\varsigma})(q,a) \nonumber \\
&& +q^{-1} \sum_{\varrho \in \mathcal{O}(D^B)} \sum_{\varsigma \in \Sigma(D^B_\varrho)} (a^{-1}q)^{\rot(D^B_{\varrho,\varsigma})} [D^B_\varrho,\varsigma] R(D^B_{\varrho,\varsigma})(q,a)\nonumber.
\end{eqnarray}

According to the orientations of the four edges of $D$ incidental at $v$, we divide $\mathcal{O}(D)$ into six disjoint subsets
\begin{equation}\label{O-D-Subsets}
\mathcal{O}(D)=\mathcal{O}(D;\setlength{\unitlength}{.75pt}
\begin{picture}(20,20)(-10,7)
\put(0,10){\vector(1,1){10}}

\put(-10,0){\vector(1,1){10}}

\put(0,10){\vector(-1,1){10}}

\put(10,0){\vector(-1,1){10}}

\put(4,8){\tiny{$v$}}

\end{picture})\sqcup\mathcal{O}(D;\setlength{\unitlength}{.75pt}
\begin{picture}(20,20)(-10,7)
\put(0,10){\vector(1,1){10}}

\put(-10,0){\vector(1,1){10}}

\put(-10,20){\vector(1,-1){10}}

\put(0,10){\vector(1,-1){10}}

\put(4,8){\tiny{$v$}}

\end{picture}) \sqcup\mathcal{O}(D;\setlength{\unitlength}{.75pt}
\begin{picture}(20,20)(-10,7)
\put(10,20){\vector(-1,-1){10}}

\put(0,10){\vector(-1,-1){10}}

\put(-10,20){\vector(1,-1){10}}

\put(0,10){\vector(1,-1){10}}

\put(4,8){\tiny{$v$}}

\end{picture}) \sqcup\mathcal{O}(D;\setlength{\unitlength}{.75pt}
\begin{picture}(20,20)(-10,7)
\put(10,20){\vector(-1,-1){10}}

\put(0,10){\vector(-1,-1){10}}

\put(0,10){\vector(-1,1){10}}

\put(10,0){\vector(-1,1){10}}

\put(4,8){\tiny{$v$}}

\end{picture}) \sqcup\mathcal{O}(D;\setlength{\unitlength}{.75pt}
\begin{picture}(20,20)(-10,7)
\put(0,10){\vector(1,1){10}}

\put(0,10){\vector(-1,-1){10}}

\put(-10,20){\vector(1,-1){10}}

\put(10,0){\vector(-1,1){10}}

\put(4,8){\tiny{$v$}}

\end{picture}) \sqcup\mathcal{O}(D;\setlength{\unitlength}{.75pt}
\begin{picture}(20,20)(-10,7)
\put(10,20){\vector(-1,-1){10}}

\put(-10,0){\vector(1,1){10}}

\put(0,10){\vector(-1,1){10}}

\put(0,10){\vector(1,-1){10}}

\put(4,8){\tiny{$v$}}

\end{picture}),
\end{equation}  
where $\ast$ in $\mathcal{O}(D;\ast)$ specifies the edge orientation near $v$. Note that, depending on $D$, some of these subsets may be empty. Using similar notations, we have the following partitions of $\mathcal{O}(\widehat{D})$, $\mathcal{O}(D^A)$ and $\mathcal{O}(D^B)$.
\begin{eqnarray}
\label{O-hatD-Subsets}&& \mathcal{O}(\widehat{D})= \mathcal{O}(\widehat{D};\setlength{\unitlength}{.75pt}
\begin{picture}(20,20)(-10,7)
\put(-10,0){\vector(1,1){20}}

\put(-2,12){\vector(-1,1){8}}

\put(2,8){\line(1,-1){8}}

\end{picture}) \sqcup\mathcal{O}(\widehat{D};\setlength{\unitlength}{.75pt}
\begin{picture}(20,20)(-10,7)
\put(-10,0){\vector(1,1){20}}

\put(-10,20){\line(1,-1){8}}

\put(2,8){\vector(1,-1){8}}

\end{picture}) \sqcup\mathcal{O}(\widehat{D};\setlength{\unitlength}{.75pt}
\begin{picture}(20,20)(-10,7)
\put(10,20){\vector(-1,-1){20}}

\put(-10,20){\line(1,-1){8}}

\put(2,8){\vector(1,-1){8}}

\end{picture}) \sqcup\mathcal{O}(\widehat{D};\setlength{\unitlength}{.75pt}
\begin{picture}(20,20)(-10,7)
\put(10,20){\vector(-1,-1){20}}
\put(-2,12){\vector(-1,1){8}}

\put(2,8){\line(1,-1){8}}
\end{picture}) \sqcup\mathcal{O}(\widehat{D};\setlength{\unitlength}{.75pt}
\begin{picture}(20,20)(-10,7)
\put(0,10){\vector(-1,-1){10}}

\put(0,10){\vector(1,1){10}}

\put(-10,20){\vector(1,-1){8}}

\put(10,0){\vector(-1,1){8}}
\end{picture}), \\
\label{O-DA-Subsets}&& \mathcal{O}(D^A) = \mathcal{O}(D^A;\setlength{\unitlength}{.75pt}
\begin{picture}(20,20)(-10,7)

\put(-10,20){\vector(-1,1){0}}

\put(10,20){\vector(1,1){0}}

\qbezier(-10,0)(0,10)(-10,20)

\qbezier(10,0)(0,10)(10,20)
\end{picture}) \sqcup \mathcal{O}(D^A;\setlength{\unitlength}{.75pt}
\begin{picture}(20,20)(-10,7)

\put(-10,0){\vector(-1,-1){0}}

\put(10,0){\vector(1,-1){0}}

\qbezier(-10,0)(0,10)(-10,20)

\qbezier(10,0)(0,10)(10,20)
\end{picture}) \sqcup \mathcal{O}(D^A;\setlength{\unitlength}{.75pt}
\begin{picture}(20,20)(-10,7)

\put(-10,0){\vector(-1,-1){0}}

\put(10,20){\vector(1,1){0}}

\qbezier(-10,0)(0,10)(-10,20)

\qbezier(10,0)(0,10)(10,20)
\end{picture}) \sqcup \mathcal{O}(D^A;\setlength{\unitlength}{.75pt}
\begin{picture}(20,20)(-10,7)

\put(-10,20){\vector(-1,1){0}}

\put(10,0){\vector(1,-1){0}}

\qbezier(-10,0)(0,10)(-10,20)

\qbezier(10,0)(0,10)(10,20)
\end{picture}),  \\
\label{O-DB-Subsets}&& \mathcal{O}(D^B) = \mathcal{O}(D^B;\setlength{\unitlength}{.75pt}
\begin{picture}(20,20)(-10,7)
\put(10,20){\vector(1,1){0}}

\put(10,0){\vector(1,-1){0}}

\qbezier(-10,0)(0,10)(10,0)

\qbezier(-10,20)(0,10)(10,20)
\end{picture}) \sqcup\mathcal{O}(D^B;\setlength{\unitlength}{.75pt}
\begin{picture}(20,20)(-10,7)
\put(-10,20){\vector(-1,1){0}}

\put(-10,0){\vector(-1,-1){0}}

\qbezier(-10,0)(0,10)(10,0)

\qbezier(-10,20)(0,10)(10,20)
\end{picture}) \sqcup\mathcal{O}(D^B;\setlength{\unitlength}{.75pt}
\begin{picture}(20,20)(-10,7)
\put(10,20){\vector(1,1){0}}

\put(-10,0){\vector(-1,-1){0}}

\qbezier(-10,0)(0,10)(10,0)

\qbezier(-10,20)(0,10)(10,20)
\end{picture}) \sqcup\mathcal{O}(D^B;\setlength{\unitlength}{.75pt}
\begin{picture}(20,20)(-10,7)
\put(-10,20){\vector(-1,1){0}}

\put(10,0){\vector(1,-1){0}}

\qbezier(-10,0)(0,10)(10,0)

\qbezier(-10,20)(0,10)(10,20)
\end{picture}).
\end{eqnarray}

First, let us consider the subset $\mathcal{O}(D;)$. There are obvious bijections 
\begin{eqnarray*}
\mathcal{O}(D;) & \xrightarrow{\varphi} & \mathcal{O}(\widehat{D};), \\
\mathcal{O}(D;) & \xrightarrow{\psi} & \mathcal{O}(D^A;)
\end{eqnarray*}
that preserve the orientations of corresponding edges. Moreover, for each $\varrho \in \mathcal{O}(D;)$, there are obvious bijections
\begin{eqnarray*}
\Sigma(D_\varrho) \xrightarrow{\varphi_\varrho} \Sigma(\widehat{D}_{\varphi(\varrho)}), \\
\Sigma(D_\varrho) \xrightarrow{\psi_\varrho} \Sigma(D^A_{\psi(\varrho)})
\end{eqnarray*}
such that, for any $\varsigma \in \Sigma(D_\varrho)$, $\varsigma$, $\varphi_\varrho(\varsigma)$ and $\psi_\varrho(\varsigma)$ are identical outside the parts shown in Figure \ref{D-hatD-DA-DB-fig}. Note that the four edges at $v$ are oriented in a crossing-like way. So $\varsigma$ (resp. $\varphi_\varrho(\varsigma)$ and $\psi_\varrho(\varsigma)$) does not change the part of $D_\varrho$ (resp. $\widehat{D}_{\varphi(\varrho)}$ and $D^A_{\psi(\varrho)}$) shown in Figure \ref{D-hatD-DA-DB-fig}. This implies that 
\begin{equation}\label{weight-D-hatD-DA}
[D_\varrho,\varsigma] =[\widehat{D}_{\varphi(\varrho)},\varphi_\varrho(\varsigma)] =[D^A_{\psi(\varrho)},\psi_\varrho(\varsigma)].
\end{equation}
It is also easy to see that
\begin{equation}\label{rot-D-hatD-DA}
\rot(D_{\varrho,\varsigma}) =\rot(\widehat{D}_{\varphi(\varrho),\varphi_\varrho(\varsigma)}) = \rot(D^A_{\psi(\varrho),\psi_\varrho(\varsigma)}).
\end{equation}
By the skein relation \eqref{HOMFLY-skein-vertex}, we know that
\begin{equation}\label{HOMFLY-D-hatD-DA}
R(D_{\varrho,\varsigma}) = -R(\widehat{D}_{\varphi(\varrho),\varphi_\varrho(\varsigma)}) +q R(D^A_{\psi(\varrho),\psi_\varrho(\varsigma)}).
\end{equation}
Combining equations \eqref{weight-D-hatD-DA}, \eqref{rot-D-hatD-DA} and \eqref{HOMFLY-D-hatD-DA}, we get
\begin{eqnarray}
\label{eq-Jaeger-D-hatD-DA-DB-u} && \sum_{\varrho \in \mathcal{O}(D;)} \sum_{\varsigma \in \Sigma(D_\varrho)} (a^{-1}q)^{\rot(D_{\varrho,\varsigma})} [D_\varrho,\varsigma] R(D_{\varrho,\varsigma})(q,a)\\
& = &  -\sum_{\varrho \in \mathcal{O}(\widehat{D};)} \sum_{\varsigma \in \Sigma(\widehat{D}_\varrho)} (a^{-1}q)^{\rot(\widehat{D}_{\varrho,\varsigma})} [\widehat{D}_\varrho,\varsigma] R(\widehat{D}_{\varrho,\varsigma})(q,a) \nonumber \\
&& + q \sum_{\varrho \in \mathcal{O}(D^A;)} \sum_{\varsigma \in \Sigma(D^A_\varrho)} (a^{-1}q)^{\rot(D^A_{\varrho,\varsigma})} [D^A_\varrho,\varsigma] R(D^A_{\varrho,\varsigma})(q,a). \nonumber
\end{eqnarray}

One can similarly deduce that
\begin{eqnarray}
\label{eq-Jaeger-D-hatD-DA-DB-r} && \sum_{\varrho \in \mathcal{O}(D;)} \sum_{\varsigma \in \Sigma(D_\varrho)} (a^{-1}q)^{\rot(D_{\varrho,\varsigma})} [D_\varrho,\varsigma] R(D_{\varrho,\varsigma})(q,a)\\
& = &  -\sum_{\varrho \in \mathcal{O}(\widehat{D};)} \sum_{\varsigma \in \Sigma(\widehat{D}_\varrho)} (a^{-1}q)^{\rot(\widehat{D}_{\varrho,\varsigma})} [\widehat{D}_\varrho,\varsigma] R(\widehat{D}_{\varrho,\varsigma})(q,a) \nonumber \\
&& + q^{-1} \sum_{\varrho \in \mathcal{O}(D^B;)} \sum_{\varsigma \in \Sigma(D^B_\varrho)} (a^{-1}q)^{\rot(D^B_{\varrho,\varsigma})} [D^B_\varrho,\varsigma] R(D^B_{\varrho,\varsigma})(q,a), \nonumber
\end{eqnarray}
\begin{eqnarray}
\label{eq-Jaeger-D-hatD-DA-DB-d} && \sum_{\varrho \in \mathcal{O}(D;)} \sum_{\varsigma \in \Sigma(D_\varrho)} (a^{-1}q)^{\rot(D_{\varrho,\varsigma})} [D_\varrho,\varsigma] R(D_{\varrho,\varsigma})(q,a)\\
& = &  -\sum_{\varrho \in \mathcal{O}(\widehat{D};)} \sum_{\varsigma \in \Sigma(\widehat{D}_\varrho)} (a^{-1}q)^{\rot(\widehat{D}_{\varrho,\varsigma})} [\widehat{D}_\varrho,\varsigma] R(\widehat{D}_{\varrho,\varsigma})(q,a) \nonumber \\
&& + q \sum_{\varrho \in \mathcal{O}(D^A;)} \sum_{\varsigma \in \Sigma(D^A_\varrho)} (a^{-1}q)^{\rot(D^A_{\varrho,\varsigma})} [D^A_\varrho,\varsigma] R(D^A_{\varrho,\varsigma})(q,a), \nonumber
\end{eqnarray}
\begin{eqnarray}
\label{eq-Jaeger-D-hatD-DA-DB-l} && \sum_{\varrho \in \mathcal{O}(D;)} \sum_{\varsigma \in \Sigma(D_\varrho)} (a^{-1}q)^{\rot(D_{\varrho,\varsigma})} [D_\varrho,\varsigma] R(D_{\varrho,\varsigma})(q,a)\\
& = &  -\sum_{\varrho \in \mathcal{O}(\widehat{D};)} \sum_{\varsigma \in \Sigma(\widehat{D}_\varrho)} (a^{-1}q)^{\rot(\widehat{D}_{\varrho,\varsigma})} [\widehat{D}_\varrho,\varsigma] R(\widehat{D}_{\varrho,\varsigma})(q,a) \nonumber \\
&& + q^{-1} \sum_{\varrho \in \mathcal{O}(D^B;)} \sum_{\varsigma \in \Sigma(D^B_\varrho)} (a^{-1}q)^{\rot(D^B_{\varrho,\varsigma})} [D^B_\varrho,\varsigma] R(D^B_{\varrho,\varsigma})(q,a). \nonumber
\end{eqnarray}

Now we consider $\mathcal{O}(D;)$. There are obvious bijections
\begin{eqnarray*}
\mathcal{O}(D;) & \xrightarrow{\varphi} & \mathcal{O}(\widehat{D};), \\
\mathcal{O}(D;) & \xrightarrow{\psi^A} & \mathcal{O}(D^A;), \\
\mathcal{O}(D;) & \xrightarrow{\psi^B} & \mathcal{O}(D^B;)
\end{eqnarray*}
that preserve the orientations of corresponding edges. 

Given a $\varrho \in \mathcal{O}(D;)$, there are partitions
\begin{eqnarray}
\label{partition-D-out} \Sigma(D_\varrho) & = & \Sigma^R(D_\varrho) \sqcup \Sigma^L(D_\varrho), \\
\label{partition-hatD-out} \Sigma(\widehat{D}_{\varphi(\varrho)}) & = & \Sigma^A(\widehat{D}_{\varphi(\varrho)}) \sqcup \Sigma^B(\widehat{D}_{\varphi(\varrho)})
\end{eqnarray}
according to what local resolution is applied to $v$ and the corresponding crossing in $\widehat{D}$. There are bijections
\begin{eqnarray*}
\Sigma^R(D_\varrho) & \xrightarrow{\varphi_\varrho^A} & \Sigma^A(\widehat{D}_{\varphi(\varrho)}), \\
\Sigma^L(D_\varrho) & \xrightarrow{\varphi_\varrho^B} & \Sigma^B(\widehat{D}_{\varphi(\varrho)}), \\
\Sigma^R(D_\varrho) & \xrightarrow{\psi_\varrho^A} & \Sigma(D_{\psi^A(\varrho)}^A), \\
\Sigma^L(D_\varrho) & \xrightarrow{\psi_\varrho^B} & \Sigma(D_{\psi^B(\varrho)}^B)
\end{eqnarray*}
such that the corresponding resolutions are identical outside the parts shown in Figure \ref{D-hatD-DA-DB-fig}. 

For a $\varsigma \in \Sigma^R(D_\varrho)$, it is easy to see that $D_{\varrho,\varsigma} = \widehat{D}_{\varphi(\varrho), \varphi_\varrho^A(\varsigma)} = D_{\psi^A(\varrho), \psi_\varrho^A(\varsigma)}^A$. So 
\begin{eqnarray}
\label{eq-rot-D-hatD-DA-out} \rot(D_{\varrho,\varsigma}) & = & \rot(\widehat{D}_{\varphi(\varrho), \varphi_\varrho^A(\varsigma)}) = \rot(D_{\psi^A(\varrho), \psi_\varrho^A(\varsigma)}^A), \\
\label{eq-HOMFLY-D-hatD-DA-out} R(D_{\varrho,\varsigma}) & = & R(\widehat{D}_{\varphi(\varrho), \varphi_\varrho^A(\varsigma)}) = R(D_{\psi^A(\varrho), \psi_\varrho^A(\varsigma)}^A).
\end{eqnarray}
One can also easily check that the weights satisfy
\begin{equation}\label{eq-weight-D-hatD-DA-out}
[D_{\varrho},\varsigma] = \frac{q^{-1}}{q-q^{-1}}[\widehat{D}_{\varphi(\varrho)}, \varphi_\varrho^A(\varsigma)] = q^{-1}[D_{\psi^A(\varrho)}^A, \psi_\varrho^A(\varsigma)].
\end{equation}
So
\begin{equation}\label{eq-weight-D-hatD-DA-out-2}
[D_{\varrho},\varsigma] = -[\widehat{D}_{\varphi(\varrho)}, \varphi_\varrho^A(\varsigma)] +q[D_{\psi^A(\varrho)}^A, \psi_\varrho^A(\varsigma)].
\end{equation}

Combining equations \eqref{eq-rot-D-hatD-DA-out}, \eqref{eq-HOMFLY-D-hatD-DA-out} and \eqref{eq-weight-D-hatD-DA-out-2}, we get
\begin{eqnarray}
\label{eq-sum-D-hatD-DA-out} && \sum_{\varsigma \in \Sigma^R(D_\varrho)} (a^{-1}q)^{\rot(D_{\varrho,\varsigma})} [D_\varrho,\varsigma] R(D_{\varrho,\varsigma})(q,a)\\
& = & -\sum_{\varsigma \in \Sigma^A(\widehat{D}_{\varphi(\varrho)})} (a^{-1}q)^{\rot(\widehat{D}_{{\varphi(\varrho)},\varsigma})} [\widehat{D}_{\varphi(\varrho)},\varsigma] R(\widehat{D}_{\varphi(\varrho),\varsigma})(q,a) \nonumber \\
&& + q \sum_{\varsigma \in \Sigma(D^A_{\psi^A(\varrho)})} (a^{-1}q)^{\rot(D^A_{\psi^A(\varrho),\varsigma})} [D^A_{\psi^A(\varrho)},\varsigma] R(D^A_{\psi^A(\varrho),\varsigma})(q,a). \nonumber
\end{eqnarray}
Similarly, one gets
\begin{eqnarray}
\label{eq-sum-D-hatD-DB-out} && \sum_{\varsigma \in \Sigma^L(D_\varrho)} (a^{-1}q)^{\rot(D_{\varrho,\varsigma})} [D_\varrho,\varsigma] R(D_{\varrho,\varsigma})(q,a)\\
& = & -\sum_{\varsigma \in \Sigma^B(\widehat{D}_{\varphi(\varrho)})} (a^{-1}q)^{\rot(\widehat{D}_{\varphi(\varrho),\varsigma})} [\widehat{D}_{\varphi(\varrho)},\varsigma] R(\widehat{D}_{\varphi(\varrho),\varsigma})(q,a) \nonumber \\
&& + q^{-1} \sum_{\varsigma \in \Sigma(D^B_{\psi^B(\varrho)})} (a^{-1}q)^{\rot(D^B_{\psi^B(\varrho),\varsigma})} [D^B_{\psi^B(\varrho)},\varsigma] R(D^B_{\psi^B(\varrho),\varsigma})(q,a). \nonumber
\end{eqnarray}
Equations \eqref{eq-sum-D-hatD-DA-out} and \eqref{eq-sum-D-hatD-DB-out} imply that
\begin{eqnarray}
\label{eq-Jaeger-D-hatD-DA-DB-out} && \sum_{\varrho \in \mathcal{O}(D;)} \sum_{\varsigma \in \Sigma(D_\varrho)} (a^{-1}q)^{\rot(D_{\varrho,\varsigma})} [D_\varrho,\varsigma] R(D_{\varrho,\varsigma})(q,a)\\
& = &  -\sum_{\varrho \in \mathcal{O}(\widehat{D};)} \sum_{\varsigma \in \Sigma(\widehat{D}_\varrho)} (a^{-1}q)^{\rot(\widehat{D}_{\varrho,\varsigma})} [\widehat{D}_\varrho,\varsigma] R(\widehat{D}_{\varrho,\varsigma})(q,a) \nonumber \\
&& + q \sum_{\varrho \in \mathcal{O}(D^A;)} \sum_{\varsigma \in \Sigma(D^A_\varrho)} (a^{-1}q)^{\rot(D^A_{\varrho,\varsigma})} [D^A_\varrho,\varsigma] R(D^A_{\varrho,\varsigma})(q,a) \nonumber \\
&& +q^{-1} \sum_{\varrho \in \mathcal{O}(D^B;)} \sum_{\varsigma \in \Sigma(D^B_\varrho)} (a^{-1}q)^{\rot(D^B_{\varrho,\varsigma})} [D^B_\varrho,\varsigma] R(D^B_{\varrho,\varsigma})(q,a)\nonumber.
\end{eqnarray}
A similar argument shows that
\begin{eqnarray}
\label{eq-Jaeger-D-hatD-DA-DB-in} && \sum_{\varrho \in \mathcal{O}(D;)} \sum_{\varsigma \in \Sigma(D_\varrho)} (a^{-1}q)^{\rot(D_{\varrho,\varsigma})} [D_\varrho,\varsigma] R(D_{\varrho,\varsigma})(q,a)\\
& = &  q \sum_{\varrho \in \mathcal{O}(D^A;)} \sum_{\varsigma \in \Sigma(D^A_\varrho)} (a^{-1}q)^{\rot(D^A_{\varrho,\varsigma})} [D^A_\varrho,\varsigma] R(D^A_{\varrho,\varsigma})(q,a) \nonumber \\
&& +q^{-1} \sum_{\varrho \in \mathcal{O}(D^B;)} \sum_{\varsigma \in \Sigma(D^B_\varrho)} (a^{-1}q)^{\rot(D^B_{\varrho,\varsigma})} [D^B_\varrho,\varsigma] R(D^B_{\varrho,\varsigma})(q,a)\nonumber.
\end{eqnarray}

From partitions \eqref{O-D-Subsets}, \eqref{O-hatD-Subsets}, \eqref{O-DA-Subsets} and \eqref{O-DB-Subsets}, we know that equations \eqref{eq-Jaeger-D-hatD-DA-DB-u}, \eqref{eq-Jaeger-D-hatD-DA-DB-r}, \eqref{eq-Jaeger-D-hatD-DA-DB-d}, \eqref{eq-Jaeger-D-hatD-DA-DB-l}, \eqref{eq-Jaeger-D-hatD-DA-DB-out} and \eqref{eq-Jaeger-D-hatD-DA-DB-in} imply that \eqref{eq-Jaeger-D-hatD-DA-DB} is true. This proves that \eqref{eq-Jaeger-formula-graph} is true for $D$. So we have completed the induction and proved \eqref{eq-Jaeger-formula-graph}. Plugging $a=q^N$ into equation \eqref{eq-Jaeger-formula-graph}, we get \eqref{eq-Jaeger-formula-N-graph}.
\end{proof}

\section{Color and Orientation in the $\mathfrak{sl}(N)$ MOY Polynomial} \label{sec-MOY}

In Section \ref{sec-Jaeger}, we only discussed a very special case of the MOY graph polynomial. In this section, we review the $\mathfrak{sl}(N)$ MOY polynomial in its full generality and prove that it is invariant under certain changes of color and orientation. In fact, such invariance holds for the colored $\mathfrak{sl}(N)$ homology too.

\subsection{The $\mathfrak{sl}(N)$ MOY graph polynomial}  In this subsection, We review the $\mathfrak{sl}(N)$ MOY graph polynomial defined \cite{MOY}. Our notations and normalizations are slightly different from that used in \cite{MOY}.

\begin{figure}[ht]
\[
\xymatrix{
\input{MOY-vertex-1} && \input{MOY-vertex-2}
}
\]
\caption{}\label{fig-MOY-vertex}
\end{figure}
 
\begin{definition}\label{def-MOY}
A MOY coloring of an oriented trivalent graph is a function from the set of edges of this graph to the set of non-negative integers such that every vertex of the colored graph is of one of the two types in Figure \ref{fig-MOY-vertex}. 

A MOY graph is an oriented trivalent graph equipped with a MOY coloring \textbf{embedded} in the plane. 

A knotted MOY graph is an oriented trivalent graph equipped with a MOY coloring \textbf{immersed} in the plane such that 
\begin{itemize}
	\item the set of singularities consists of finitely many transversal double points away from vertices,
	\item at each of these transversal double points, we specify the upper- and the lower- branches (which makes it a crossing.)
\end{itemize}
\end{definition}

Fix a positive integer $N$. Define $\mathcal{N}= \{2k-N+1|k=0,1,\dots, N-1\}$ and denote by $\mathcal{P}(\mathcal{N})$ the power set of $\mathcal{N}$.

Let $\Gamma$ be a MOY graph. Denote by $E(\Gamma)$ the set of edges of $\Gamma$, by $V(\Gamma)$ the set of vertices of $\Gamma$ and by $\mathsf{c}:E(\Gamma) \rightarrow \zed_{\geq 0}$ the color function of $\Gamma$. That is, for every edge $e$ of $\Gamma$, $\mathsf{c}(e) \in \zed_{\geq 0}$ is the color of $e$. 

\begin{definition}\label{MOY-state-def}
 A state of $\Gamma$ is a function $\varphi: E(\Gamma) \rightarrow \mathcal{P}(\mathcal{N})$ such that
\begin{enumerate}[(i)]
	\item for every edge $e$ of $\Gamma$, $\#\varphi(e) = \mathsf{c}(e)$,
	\item for every vertex $v$ of $\Gamma$, as depicted in Figure \ref{fig-MOY-vertex}, we have $\varphi(e)=\varphi(e_1) \cup \varphi(e_2)$. 
\end{enumerate}
Note that (i) and (ii) imply that $\varphi(e_1) \cap \varphi(e_2)=\emptyset$.

Denote by $\mathcal{S}_N(\Gamma)$ the set of states of $\Gamma$.
\end{definition}

Define a function $\pi:\mathcal{P}(\mathcal{N}) \times \mathcal{P}(\mathcal{N}) \rightarrow \zed_{\geq 0}$ by
\begin{equation}\label{eq-def-pi}
\pi (A_1, A_2) = \# \{(a_1,a_2) \in A_1 \times A_2 ~|~ a_1>a_2\} \text{ for } A_1,~A_2 \in \mathcal{P}(\mathcal{N}).
\end{equation}

Let $\varphi$ be a state of $\Gamma$. For a vertex $v$ of $\Gamma$ (as depicted in Figure \ref{fig-MOY-vertex}), the weight of $v$ with respect to $\varphi$ is defined to be 
\begin{equation}\label{eq-weight-vertex}
\mathrm{wt}(v;\varphi) = \frac{\mathsf{c}(e_1)\mathsf{c}(e_2)}{2} - \pi(\varphi(e_1),\varphi(e_2)).
\end{equation}

Next, replace each edge $e$ of $\Gamma$ by $\mathsf{c}(e)$ parallel edges, assign to each of these new edges a different element of $\varphi(e)$ and, at every vertex, connect each pair of new edges assigned the same element of $\mathcal{N}$. This changes $\Gamma$ into a collection $\mathcal{C}_\varphi$ of embedded oriented circles, each of which is assigned an element of $\mathcal{N}$. By abusing notation, we denote by $\varphi(C)$ the element of $\mathcal{N}$ assigned to $C\in \mathcal{C}_\varphi$. Note that: 
\begin{itemize}
	\item There may be intersections between different circles in $\mathcal{C}_\varphi$. But, each circle in $\mathcal{C}_\varphi$ is embedded, that is, it has no self-intersection or self-tangency.
	\item There may be more than one way to do this. But if we view $\mathcal{C}_\varphi$ as a virtual link and the intersection points between different elements of $\mathcal{C}_\varphi$ are virtual crossings, then the above construction is unique up to purely virtual regular Reidemeister moves.
\end{itemize}
The rotation number $\mathrm{rot}(\varphi)$ of $\varphi$ is then defined to be
\begin{equation}\label{eq-rot-state}
\mathrm{rot}(\varphi) = \sum_{C\in \mathcal{C}_\varphi} \varphi(C) \mathrm{rot}(C).
\end{equation}
Note that the sum $\sum_{C\in \mathcal{C}_\varphi} \mathrm{rot}(C)$ is independent of the choice of $\varphi \in \mathcal{S}_N(\Gamma)$. We call this sum the rotation number of $\Gamma$. That is,
\begin{equation}\label{eq-rot-gamma}
\mathrm{rot}(\Gamma) := \sum_{C\in \mathcal{C}_\varphi} \mathrm{rot}(C).
\end{equation}

\begin{definition}\label{def-MOY-graph-poly}\cite{MOY}
The $\mathfrak{sl}(N)$ MOY graph polynomial of $\Gamma$ is defined to be
\begin{equation}\label{MOY-bracket-def}
\left\langle \Gamma \right\rangle_N := \begin{cases}
\sum_{\varphi \in \mathcal{S}_N(\Gamma)} \left(\prod_{v \in V(\Gamma)} q^{\mathrm{wt}(v;\varphi)}\right) q^{\mathrm{rot}(\varphi)} & \text{if } 0\leq \mathsf{c}(e) \leq N ~\forall ~e \in E(\Gamma), \\
0 & \text{otherwise}.
\end{cases}
\end{equation}

For a knotted MOY graph $D$, define the $\mathfrak{sl}(N)$ MOY polynomial $\left\langle D \right\rangle_N$ of $D$ by applying the following skein sum at every crossing of $D$. \vspace{-2pc}
\begin{equation}\label{MOY-skein-general-+}
\left\langle \setlength{\unitlength}{1pt}
\begin{picture}(40,40)(-20,0)

\put(-20,-20){\vector(1,1){40}}

\put(20,-20){\line(-1,1){15}}

\put(-5,5){\vector(-1,1){15}}

\put(-11,15){\tiny{$_m$}}

\put(9,15){\tiny{$_n$}}

\end{picture} \right\rangle_N = \sum_{k=\max\{0,m-n\}}^{m} (-1)^{m-k} q^{k-m}\left\langle \input{square-m-n-k-left-poly}\right\rangle_N,
\end{equation}
\begin{equation}\label{MOY-skein-general--}
\left\langle \setlength{\unitlength}{1pt}
\begin{picture}(40,40)(-20,0)

\put(20,-20){\vector(-1,1){40}}

\put(-20,-20){\line(1,1){15}}

\put(5,5){\vector(1,1){15}}

\put(-11,15){\tiny{$_m$}}

\put(9,15){\tiny{$_n$}}

\end{picture} \right\rangle_N = \sum_{k=\max\{0,m-n\}}^{m} (-1)^{k-m} q^{m-k}\left\langle \input{square-m-n-k-left-poly}\right\rangle_N. \vspace{2pc}
\end{equation}
\end{definition}

\begin{theorem}\cite{MOY}
$\left\langle D \right\rangle_N$ is invariant under Reidemeister (II), (III) moves and changes under Reidemeister (I) moves only by a factor of $\pm q^k$, which depends on the color of the edge involved in the Reidemeister (I) move.
\end{theorem}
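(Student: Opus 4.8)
\emph{Proof proposal.} The statement is the classical Murakami--Ohtsuki--Yamada invariance theorem, here in the normalization in which the bracket of a planar MOY graph carries the extra factor $q^{\mathrm{rot}(\varphi)}$. The plan is to follow the original strategy of \cite{MOY}: first extract from the state-sum definition \eqref{MOY-bracket-def} a short list of local identities among the $\mathfrak{sl}(N)$ MOY graph polynomials of \emph{planar} MOY graphs, and then verify the Reidemeister moves by expanding every crossing via \eqref{MOY-skein-general-+}--\eqref{MOY-skein-general--} and collapsing the resulting $\zed[q,q^{-1}]$-linear combination of planar MOY graphs using those identities. Note first that $\langle\Gamma\rangle_N$ is a planar isotopy invariant: the vertex weights $\mathrm{wt}(v;\varphi)$ of \eqref{eq-weight-vertex} are combinatorial, and the rotation number of each circle of $\mathcal{C}_\varphi$ --- hence $\mathrm{rot}(\varphi)$ --- is unchanged under planar isotopy.

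The first block of work is to establish the local relations directly from \eqref{MOY-bracket-def}: (a) the value of an unknotted circle colored $m$ is a power of $q$ times the Gaussian binomial $\qb{N}{m}$ --- a state is a choice of $m$-element subset $A\subseteq\mathcal{N}$, there are no vertices, and expanding the edge into $m$ concentric circles gives $\mathrm{rot}(\varphi)=\pm\sum_{a\in A}a$, so the state sum is $\sum_{|A|=m}q^{\pm\sum_{a\in A}a}$, which is a Gaussian binomial in $q^{2}$ up to an overall power of $q$; (b) removal of a bigon whose two edges colored $m$ and $n$ merge to an edge colored $m+n$ produces the factor $\qb{m+n}{m}$; (c) the two ways of composing a pair of trivalent vertices with total color $\ell+m+n$ give the same polynomial (associativity); and (d) the family of square (``bigon'') relations trading a square face for a $\zed[q,q^{-1}]$-combination of simpler graphs. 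Each of (b)--(d) is proved by a bijective argument: restrict a state of the more complicated side to the part common to both sides, observe that the fibre of this restriction is indexed by the admissible ways of distributing subsets of $\mathcal{N}$ over the deleted edges, and check that the contributions of $\mathrm{wt}(v;\varphi)$ and $\mathrm{rot}(\varphi)$ on the two sides differ exactly by the claimed monomial in $q$. Because \eqref{MOY-bracket-def} is an explicit formula, no confluence or well-definedness issue arises here.

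With the local relations in hand, the Reidemeister verifications are bookkeeping. For move (II) one resolves both crossings, obtaining a double sum (one summation index per crossing, as in \eqref{MOY-skein-general-+}--\eqref{MOY-skein-general--}) of planar MOY graphs, each carrying a bigon or square face; applying (b)--(d) forces all but one term to cancel, leaving the two parallel strands. Move (III) is handled the same way after a preliminary move-(II)-type reduction combined with associativity (c); this is the longest computation but introduces nothing new. For move (I), resolving a single positive (respectively negative) curl on a strand colored $m$ produces $\sum_{k}(-1)^{m-k}q^{\pm(k-m)}$ times a planar graph that, after a bigon removal, is the straight strand carrying a small circle (or is the straight strand itself); assembling the vertex-weight contribution, the circle evaluation from (a), and the state-dependent shift of $\mathrm{rot}(\varphi)$ (each of the $m$ sub-circles along the strand acquires rotation $\pm1$), the whole expression collapses, via a finite $q$-binomial identity, to $\pm q^{k_{0}}$ times the straight strand, with the exponent $k_{0}$ and the sign determined by $m$ and $N$.

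The main obstacle is the derivation of the general square relation (d) for arbitrary colors: setting up the right state bijection and verifying that the discrepancies in $\mathrm{wt}(v;\varphi)$ and in $\mathrm{rot}(\varphi)$ reassemble into the correct Gaussian-binomial coefficients requires careful combinatorics with the function $\pi$ of \eqref{eq-def-pi} and with partitions of subsets of $\mathcal{N}$. Once (d) and its degenerate specializations are secured, the rest is routine. The extra factor $q^{\mathrm{rot}(\varphi)}$ poses no real difficulty: each local relation can be arranged, by planar isotopy, so that the resolved circles on its two sides have matching rotation numbers, so that $q^{\mathrm{rot}(\varphi)}$ simply goes along for the ride in moves (II) and (III); in move (I) the curl produces a genuine, state-dependent change of $\mathrm{rot}(\varphi)$, which is precisely one of the ingredients that makes the expansion over $k$ collapse to the single monomial $\pm q^{k_{0}}$. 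The full computation in the standard normalization is carried out in \cite{MOY}; the only extra input here is the consistent tracking of $q^{\mathrm{rot}(\varphi)}$ throughout.
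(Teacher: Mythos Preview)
The paper does not give its own proof of this statement: the theorem is quoted directly from \cite{MOY} and left without argument. Your proposal is a reasonable outline of the original Murakami--Ohtsuki--Yamada proof strategy, so in that sense it is ``the same approach'' --- but be aware that there is nothing in the present paper to compare against beyond the citation.
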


As pointed out in \cite{MOY}, if $D$ is a link diagram colored by positive integers, then $\left\langle D \right\rangle_N$ is the Reshetikhin-Turaev $\mathfrak{sl}(N)$ polynomial of the link colored by corresponding wedge powers of the defining representation of $\mathfrak{sl}(N;\C)$.

\begin{figure}[ht]
\vspace{-1pc}
\[
\xymatrix{
\input{vertex-oriented} \ar@{<~>}[rr]<-3ex> && \input{wide-edge}
}
\]
\caption{}\label{4-valent-to-MOY-fig}

\end{figure}

Let $D$ be an oriented knotted $4$-valent graph as defined in Subsection \ref{subsec-Jaeger-graph}. We color all edges of $D$ by $1$ and modify its vertices as in Figure \ref{4-valent-to-MOY-fig}. This gives us a MOY graph, which we identify with $D$. Thus, $\left\langle D\right\rangle_N$ is now defined for any oriented knotted $4$-valent graph $D$. Moreover, it was established in \cite{MOY} that
\begin{equation}\label{MOY-skein-special}
\begin{cases}
\left\langle \right\rangle_N = \frac{q^N-q^{-N}}{q-q^{-1}} \\
\left\langle \right\rangle_N - \left\langle \right\rangle_N = (q-q^{-1})\left\langle \right\rangle_N \\
\left\langle \right\rangle_N  = -q^{-N} \left\langle \right\rangle_N \\
\left\langle \right\rangle_N = \left\langle \right\rangle_N + q^{-1} \left\langle \right\rangle_N = \left\langle  \right\rangle_N + q \left\langle \right\rangle_N
\end{cases}
\end{equation}
(Note that our normalizetion of $\left\langle D\right\rangle_N$ is different from that in \cite{MOY}. Please refer to \cite[Theorem 14.2]{Wu-color} to see how the skein relations in \cite{MOY} translate to our normalization.)

Comparing skein relation \eqref{MOY-skein-special} to skein relations \eqref{HOMFLY-skein} and \eqref{HOMFLY-skein-vertex}, one can see that
\begin{equation}\label{eq-MOY-HOMFLY}
\left\langle D \right\rangle_N = (-1)^m R_N(\overline{D}),
\end{equation}
where $D$ is an oriented knotted $4$-valent graph, $m$ is the number of crossings in $D$, and $\overline{D}$ is the oriented knotted $4$-valent graph obtained from $D$ by switching the upper- and the lower-branches at every crossing of $D$.

\subsection{Reversing the orientation and the color along a simple circuit} In the remainder of this section, we fix a positive integer $N$. 

Let $\Gamma$ be a MOY graph and $\Delta$ a simple circuit of $\Gamma$. That is, $\Delta$ is a subgraph of $\Gamma$ such that
\begin{enumerate}[(i)]
	\item $\Delta$ is a (piecewise smoothly) embedded circle in $\mathbb{R}^2$;
	\item the orientations of all edges of $\Delta$ coincide with the same orientation of this embedded circle. 
\end{enumerate}
We call the color change $k \leadsto N-k$ a reversal of color. It is easy to see that, if we reverse both the orientation and the color of the edges along $\Delta$, then we get another MOY graph $\Gamma'$. We have the following theorem.

\begin{theorem}\label{thm-oc-reverse}
\begin{equation}\label{eq-oc-reverse}
\left\langle \Gamma \right\rangle_N = \left\langle \Gamma' \right\rangle_N
\end{equation}
\end{theorem}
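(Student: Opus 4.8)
The plan is to prove \eqref{eq-oc-reverse} term-by-term in the state sum \eqref{MOY-bracket-def}, using only the two elementary facts that $\mathcal{N}$ is symmetric under $x\mapsto -x$ and that $\sum_{x\in\mathcal{N}}x=0$. First I would record the local structure of the reversal. Since $\Delta$ is a coherently oriented embedded circle, at every vertex $v$ of $\Gamma$ lying on $\Delta$ the circuit must enter and leave $v$ along the unique pair of edges consisting of the ``thick'' edge (color $m+n$ in Figure \ref{fig-MOY-vertex}) together with exactly one of the two ``thin'' edges; reversing orientation and color along $\Delta$ indeed yields a MOY graph $\Gamma'$ drawn on the very same underlying picture, only the arrows and colors on the $\Delta$-edges being altered. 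One also checks that all edge colors of $\Gamma$ lie in $\{0,\dots,N\}$ if and only if the same holds for $\Gamma'$, so both sides of \eqref{eq-oc-reverse} vanish simultaneously in the degenerate case and we may assume otherwise.

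Next I would define a bijection $\Phi\colon\mathcal{S}_N(\Gamma)\to\mathcal{S}_N(\Gamma')$ by $\Phi(\varphi)(e)=\varphi(e)$ for $e\notin\Delta$ and $\Phi(\varphi)(e)=\mathcal{N}\setminus\varphi(e)$ for $e\in\Delta$. Well-definedness is checked vertex by vertex: if $e$ is the thick edge at $v$ and $e_1,e_2$ the thin ones with, say, $e,e_1\in\Delta$ and $e_2\notin\Delta$, then $\varphi(e)=\varphi(e_1)\sqcup\varphi(e_2)$ together with $\varphi(e_2)\subseteq\varphi(e)$ gives $\mathcal{N}\setminus\varphi(e_1)=(\mathcal{N}\setminus\varphi(e))\sqcup\varphi(e_2)$, which is exactly the MOY state condition at $v$ in $\Gamma'$ (whose incoming edge at $v$ is now $e_1$); the three remaining configurations are handled identically. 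Since complementing along $\Delta$ twice returns to $\Gamma$, $\Phi$ is a bijection with inverse the analogous construction for $\Gamma'$.

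The crux is to show, for each $\varphi$, that $\prod_{v}q^{\mathrm{wt}(v;\varphi)}\,q^{\rot(\varphi)}=\prod_{v}q^{\mathrm{wt}(v;\Phi(\varphi))}\,q^{\rot(\Phi(\varphi))}$; write $\sigma(X):=\sum_{x\in X}x$. Drawing the edges of $\Gamma$ as smooth arcs meeting at corners at the vertices, I would decompose $2\pi\,\rot(\gamma_a)$ (where $\gamma_a$ is the embedded multicurve of strands labelled $a\in\mathcal{N}$) as a sum of turning angles along edges plus exterior angles at vertices, so that $\rot(\varphi)=\sum_a a\,\rot(\gamma_a)$ splits over edges and vertices. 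Off $\Delta$ nothing changes. Along an edge $e\in\Delta$ the $a$-weighted edge contributions to $\rot(\varphi)$ and to $\rot(\Phi(\varphi))$ differ only by an orientation reversal, so summing over $a\in\mathcal{N}$ and using $\sigma(\mathcal{N}\setminus\varphi(e))=-\sigma(\varphi(e))$ (because $\sigma(\mathcal{N})=0$) shows these are equal. Hence the entire discrepancy is supported at the vertices of $\Delta$ and becomes a purely local computation: at such a $v$ (with $A_2:=\varphi(e_2)$, $e_2$ the thin edge not on $\Delta$) one verifies, using $\pi(A_1,A_2)+\pi(A_2,A_1)=|A_1|\,|A_2|$ and $\pi(X,\mathcal{N})=\tfrac12\bigl(\sigma(X)+|X|(N-1)\bigr)$, that $\mathrm{wt}(v;\varphi)-\mathrm{wt}(v;\Phi(\varphi))=\tfrac12\sigma(A_2)$; and separately, using that reversing a strand's orientation negates its exterior angle and that the three exterior angles at the geometrically fixed corner $v$ differ by $\pm\pi$, one finds that the vertex part of $\rot(\varphi)-\rot(\Phi(\varphi))$ at $v$ equals $-\tfrac12\sigma(A_2)$. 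These cancel, the total weights agree, and \eqref{eq-oc-reverse} follows by summing over $\Phi$.

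The main obstacle is this last local step: carrying out the exterior-angle bookkeeping at the vertices of $\Delta$. One must rotate each vertex of $\Gamma'$ into the template of Figure \ref{fig-MOY-vertex} to decide which outgoing edge plays the role of $e_1$ and which of $e_2$ in the weight formula, keep all turning angles in $(-\pi,\pi)$, and treat the four configurations (a vertex of either type in Figure \ref{fig-MOY-vertex}, with either thin edge lying on $\Delta$). This is a finite but tedious case check, comparable in nature to the tabulation in the proof of Theorem \ref{thm-Jaeger-formula-graph}.
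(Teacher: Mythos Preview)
Your proposal is correct and follows essentially the same approach as the paper: both define the bijection on states by complementing $\varphi$ along $\Delta$, localize the rotation number (the paper via curvature integrals on half-edges assigned to vertex neighborhoods, you via edge turning angles plus vertex exterior angles), and then verify the key local identity that the change in $\mathrm{wt}(v;\cdot)$ equals $\pm\tfrac12\sigma(\varphi(e_2))$ and is exactly cancelled by the change in the local rotation contribution at $v$. The only difference is organizational---you dispose of the edge contributions first and then isolate the vertex exterior angles, whereas the paper absorbs the half-edge curvature into the vertex term from the start---but the finite case check at the vertices of $\Delta$ and the combinatorial identities used (e.g.\ $\sum\mathcal{N}=0$, $\pi(A_1,A_2)+\pi(A_2,A_1)=|A_1||A_2|$) are the same.
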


\begin{proof}
We prove equation \eqref{eq-oc-reverse} using a localized formulation of the state sum \eqref{MOY-bracket-def}. 

Cut each edge of $\Gamma$ at one point in its interior. This divides $\Gamma$ into a collection of neighborhoods of its vertices, each of which is a vertex with three adjacent half-edges. (See Figure \ref{fig-MOY-vertex-angles}, where $e$, $e_1$ and $e_2$ are the three half-edges.)

\begin{figure}[ht]
\vspace{-2pc}
\[
\xymatrix{
v = \input{MOY-vertex-2-angles} && \hat{v} = \input{MOY-vertex-1-angles} 
}\vspace{2pc}
\]
\caption{}\label{fig-MOY-vertex-angles}
\end{figure}

Let $\varphi \in \mathcal{S}_N(\Gamma)$. For a vertex of $\Gamma$, if it is of the form $v$ in Figure \ref{fig-MOY-vertex-angles}, we denote by $\alpha$ the directed angle from $e_1$ to $e$ and by $\beta$ the directed angle from $e_2$ to $e$. We define
\begin{eqnarray}
\label{rot-def-local-v} \rot(v;\varphi) 
& = & \frac{1}{2\pi} \int_{e}\kappa ds  \cdot \sum \varphi(e) +\frac{1}{2\pi}\left(\alpha + \int_{e_1}\kappa ds\right) \cdot \sum \varphi(e_1) \\
&& + \frac{1}{2\pi}\left(\beta + \int_{e_2}\kappa ds\right) \cdot \sum \varphi(e_2), \nonumber
\end{eqnarray}
where $\kappa$ is the signed curvature of a plane curve and $\sum A:=\sum_{a\in A} a$ for a subset $A$ of $\mathcal{N}= \{2k-N+1|k=0,1,\dots, N-1\}$.

If the vertex is of the form $\hat{v}$ in Figure \ref{fig-MOY-vertex-angles}, we denote by $\hat{\alpha}$ the directed angle from $e$ to $e_1$ and by $\hat{\beta}$ the directed angle from $e$ to $e_2$. We define
\begin{eqnarray}
\label{rot-def-local-v-prime}  \rot(\hat{v};\varphi) 
& = & \frac{1}{2\pi}\int_{e}\kappa ds \cdot \sum \varphi(e) + \frac{1}{2\pi} \left(\hat{\alpha}+\int_{e_1}\kappa ds\right) \cdot \sum \varphi(e_1)   \\
&&  + \frac{1}{2\pi}\left(\hat{\beta}+\int_{e_2}\kappa ds\right) \cdot \sum \varphi(e_2) . \nonumber
\end{eqnarray}

Using the Gauss-Bonnet Theorem, one can easily check that 
\begin{equation} \label{eq-rot-sum}
\rot(\varphi) = \sum_{v \in V(\Gamma)} \rot(v;\varphi).
\end{equation}
So, by Definition \ref{def-MOY-graph-poly}, we have
\begin{equation} \label{eq-MOY-local}
\left\langle \Gamma \right\rangle_N = \sum_{\varphi \in \mathcal{S}_N(\Gamma)}\prod_{v \in V(\Gamma)} q^{\mathrm{wt}(v;\varphi)+\rot(v;\varphi)}.
\end{equation}

Since $\Gamma'$ is obtained from $\Gamma$ by reversing the orientation and the color of the edges along $\Delta$, there are natural bijections between $V(\Gamma)$ and $V(\Gamma')$ and between $E(\Gamma)$ and $E(\Gamma')$. Basically, every vertex corresponds to itself and every edge corresponds to itself (with reversed color and orientation if the edge belongs to $\Delta$.) For a vertex $v$ of $\Gamma$, we denote by $v'$ the vertex of $\Gamma'$ corresponding to $v$. For an edge $e$ of $\Gamma$, we denote by $e'$ the edge of $\Gamma'$ corresponding to $e$. Given a $\varphi \in \mathcal{S}_N(\Gamma)$, we define $\varphi': E(\Gamma') \rightarrow \mathcal{P(N)}$ by
\begin{equation}\label{eq-varphi-prime-def}
\varphi'(e') = \begin{cases}
\varphi(e) & \text{if } e \notin E(\Delta); \\
\mathcal{N} \setminus \varphi(e) & \text{if } e \in E(\Delta).
\end{cases}
\end{equation}
It is easy to see that $\varphi' \in \mathcal{S}_N(\Gamma')$ and that $\varphi \mapsto \varphi'$ is a bijection from $\mathcal{S}_N(\Gamma)$ to $\mathcal{S}_N(\Gamma')$.

We claim that, for all $v \in V(\Gamma)$ and $\varphi \in \mathcal{S}_N(\Gamma)$,
\begin{equation}\label{eq-local-state-match}
\mathrm{wt}(v;\varphi)+ \rot(v;\varphi) = \mathrm{wt}(v';\varphi') + \rot(v';\varphi').
\end{equation}
From equation \eqref{eq-MOY-local}, one can see that equation \eqref{eq-local-state-match} implies Theorem \ref{thm-oc-reverse}.

To prove equation \eqref{eq-local-state-match}, we need to consider how the change $\Gamma \leadsto \Gamma'$ affects the vertex $v$. If $v$ is not a vertex of $\Delta$, then none of the three edges incidental at $v$ is changed. So equation \eqref{eq-local-state-match} is trivially true. If $v$ is a vertex of $\Delta$, then exactly two edges incidental at $v$ are changed, and one of these changed edge must be the edge $e$ in Figure \ref{fig-MOY-vertex-angles} (for $v$ in either form.) So, counting the choices of the form of $v$ and the choices of the other changed edge, there are four possible ways to change $v$ if $v$ is a vertex of $\Delta$. (See Figure \ref{rotation-numbers-oc-reverse-index-fig} below.) The proofs of \eqref{eq-local-state-match} in these four cases are very similar. So we only give the details for the case in Figure \ref{fig-MOY-vertex-change} and leave the other cases to the reader.

\begin{figure}[ht]
\vspace{-2pc}
\[
\xymatrix{
v = \input{MOY-vertex-2-angles} &\ar@{~>}[rr]<-5ex> && & v' = \input{MOY-vertex-2-angles-changed}
}\vspace{2pc}
\]
\caption{}\label{fig-MOY-vertex-change}
\end{figure}

First, let us consider $\mathrm{wt}(v;\varphi)$ and $\mathrm{wt}(v';\varphi')$. 
\begin{eqnarray*}
\mathrm{wt}(v';\varphi') & = & \frac{n(N-m-n)}{2} -\pi(\varphi'(e_2'),\varphi'(e')) \\
& = & \frac{n(N-m-n)}{2} - (n(N-m-n) - \pi(\varphi'(e'),\varphi'(e_2'))) \\
& = & \pi(\mathcal{N}\setminus\varphi(e),\varphi(e_2)) - \frac{n(N-m-n)}{2}.
\end{eqnarray*}
Note that $\pi(\varphi(e_1),\varphi(e_2)) + \pi(\mathcal{N}\setminus\varphi(e),\varphi(e_2)) = \pi(\mathcal{N}\setminus\varphi(e_2),\varphi(e_2))$. The above implies
\begin{eqnarray*}
&& \mathrm{wt}(v';\varphi') - \mathrm{wt}(v;\varphi) \\
& = & \pi(\mathcal{N}\setminus\varphi(e),\varphi(e_2)) - \frac{n(N-m-n)}{2} - \frac{mn}{2} + \pi(\varphi(e_1),\varphi(e_2)) \\
& = & \pi(\mathcal{N}\setminus\varphi(e_2),\varphi(e_2)) - \frac{n(N-n)}{2}
\end{eqnarray*}
Write $\varphi(e_2) =\{j_1,\dots,j_n\} \subset \mathcal{N}$, where $j_1<j_2<\cdots <j_n$. Then
\[ 
\pi(\mathcal{N}\setminus\varphi(e_2),\varphi(e_2)) = \sum_{l=1}^n [\frac{1}{2}(N-1-j_l)-(n-l)] =  \frac{n(N-n)}{2} - \frac{1}{2}\sum \varphi(e_2).
\]
Altogether, we get
\begin{equation}\label{eq-weight-change}
\mathrm{wt}(v';\varphi') = \mathrm{wt}(v;\varphi) - \frac{1}{2}\sum \varphi(e_2).
\end{equation}

Now we compare $\rot(v;\varphi)$ to $\rot(v';\varphi')$. As before, denote by $\alpha$ the directed angle from $e_1$ to $e$ and by $\beta$ the directed angle from $e_2$ to $e$. Also denote by $\gamma$ the directed angle from $e_2'$ to $e_1'$. Note that the directed angle from $e'$ to $e_1'$ is $-\alpha$. Since $\sum\mathcal{N}=0$, we have $\sum (\mathcal{N} \setminus A) = - \sum A$ for any $A\subset \mathcal{N}$. Moreover, note that reversing the orientation of a plane curve changes the sign of its curvature $\kappa$.
\begin{eqnarray*}
\rot(v';\varphi') 
& = & \frac{1}{2\pi} \int_{e_1'}\kappa ds \cdot \sum \varphi'(e_1')  +\frac{1}{2\pi}\left(-\alpha + \int_{e'}\kappa ds\right) \cdot \sum \varphi'(e')  \\
&&  + \frac{1}{2\pi}\left(\gamma + \int_{e_2'}\kappa ds\right) \cdot \sum \varphi'(e_2') \\
& = & \frac{1}{2\pi} \int_{e_1}\kappa ds \cdot \sum \varphi(e_1) +\frac{1}{2\pi}\left(\alpha + \int_{e}\kappa ds\right) \cdot \sum \varphi(e)  \\
&&  + \frac{1}{2\pi}\left(\gamma + \int_{e_2}\kappa ds\right) \cdot \sum \varphi(e_2) \\
& = & \frac{1}{2\pi} \left( \alpha+ \int_{e_1}\kappa ds\right) \cdot \sum \varphi(e_1) +\frac{1}{2\pi}\int_{e}\kappa ds \cdot \sum \varphi(e)  \\
&&  + \frac{1}{2\pi}\left(\alpha+\gamma + \int_{e_2}\kappa ds\right) \cdot \sum \varphi(e_2) \\
& = & \rot(v;\varphi) + \frac{\alpha-\beta+\gamma}{2\pi} \cdot \sum \varphi(e_2).
\end{eqnarray*}
Note that $\alpha-\beta+\gamma =\pi$. The above shows that
\begin{equation}\label{rotation-local-change}
\rot(v';\varphi') = \rot(v;\varphi) + \frac{1}{2} \sum \varphi(e_2).
\end{equation}

Equation \eqref{eq-local-state-match} follows easily from equations \eqref{eq-weight-change} and \eqref{rotation-local-change}. 
\end{proof}

\subsection{The colored $\mathfrak{sl}(N)$ homology} Theorem \ref{thm-oc-reverse} is also true for the colored $\mathfrak{sl}(N)$ homology for MOY graphs defined in \cite{Wu-color}. More precisely, reversing the orientation and the color along a simple circuit in a MOY graph does not change the homotopy type of the matrix factorization associated to this MOY graph. To prove this, we first recall some basic properties of the matrix factorizations associated to MOY graphs.

We denote by $C_N(\Gamma)$ the $\zed_2\oplus\zed$-graded matrix factorization associated a MOY graph $\Gamma$ defined in \cite[Definition 5.5]{Wu-color} and by $\hat{C}_N(D)$ the $\zed_2\oplus\zed\oplus\zed$-graded unnormalized chain complex associated to a knotted MOY graph $D$ defined in \cite[Definitions 11.4 and 11.16]{Wu-color}. Recall that:
\begin{itemize}
	\item The $\zed_2$-grading of $C_N(\Gamma)$ and $\hat{C}_N(D)$ comes from the definition of matrix factorizations and is trivial on the homology $H_N(\Gamma)$ and $\hat{H}_N(D)$ of $C_N(\Gamma)$ and $\hat{C}_N(D)$. (See \cite[Theorems 1.3 and 14.7]{Wu-color}.)
	\item The $\zed$-grading of $C_N(\Gamma)$ comes from the polynomial grading of the base ring and is called the quantum grading. The homology $H_N(\Gamma)$ inherits this quantum grading.
	\item One $\zed$-grading of $\hat{C}_N(D)$ is the quantum grading induced by the quantum grading of matrix factorizations of MOY graphs. The other $\zed$-grading of $\hat{C}_N(D)$ is the homological grading. $\hat{H}_N(D)$ inherits both of these gradings.
\end{itemize}
Also note that, for a MOY graph $\Gamma$, $C_N(\Gamma)= \hat{C}_N(\Gamma)$.

\begin{theorem}\cite{Wu-color}\label{thm-MOY-calculus}
\begin{enumerate}
  \item $\hat{C}_N( \bigcirc_m ) \simeq \C\{\qb{N}{m}\}$, where $\bigcirc_m$ is a circle colored by $m$. \vspace{-1pc}
  \item $\hat{C}_N( \setlength{\unitlength}{1pt}
\begin{picture}(50,50)(-80,20)

\put(-60,10){\vector(0,1){10}}

\put(-60,20){\vector(-1,1){20}}

\put(-60,20){\vector(1,1){10}}

\put(-50,30){\vector(-1,1){10}}

\put(-50,30){\vector(1,1){10}}

\put(-75,3){\tiny{$i+j+k$}}

\put(-55,21){\tiny{$j+k$}}

\put(-80,42){\tiny{$i$}}

\put(-60,42){\tiny{$j$}}

\put(-40,42){\tiny{$k$}}

\end{picture}) \simeq \hat{C}_N(  \setlength{\unitlength}{1pt}
\begin{picture}(50,50)(40,20)

\put(60,10){\vector(0,1){10}}

\put(60,20){\vector(1,1){20}}

\put(60,20){\vector(-1,1){10}}

\put(50,30){\vector(1,1){10}}

\put(50,30){\vector(-1,1){10}}

\put(45,3){\tiny{$i+j+k$}}

\put(38,21){\tiny{$i+j$}}

\put(80,42){\tiny{$k$}}

\put(60,42){\tiny{$j$}}

\put(40,42){\tiny{$i$}}

\end{picture})$. \vspace{-1pc}
	\item $\hat{C}_N( \input{v-vector-m+n-bubble-slide}) \simeq  \hat{C}_N(\setlength{\unitlength}{.75pt}
\begin{picture}(55,80)(-20,40)
\put(0,0){\vector(0,1){80}}
\put(5,75){\tiny{$_{m+n}$}}
\end{picture})\{\qb{m+n}{n}\} $. 
	\item $\hat{C}_N( \setlength{\unitlength}{.75pt}
\begin{picture}(60,80)(-30,40)
\put(0,0){\vector(0,1){30}}
\put(0,30){\vector(0,1){20}}
\put(0,50){\vector(0,1){30}}

\put(-1,40){\line(1,0){2}}

\qbezier(0,30)(25,20)(25,30)
\qbezier(0,50)(25,60)(25,50)
\put(25,50){\vector(0,-1){20}}

\put(5,75){\tiny{$_{m}$}}
\put(5,5){\tiny{$_{m}$}}
\put(-30,38){\tiny{$_{m+n}$}}
\put(14,60){\tiny{$_{n}$}}
\end{picture}) \simeq  \hat{C}_N( \setlength{\unitlength}{.75pt}
\begin{picture}(40,80)(-20,40)
\put(0,0){\vector(0,1){80}}
\put(5,75){\tiny{$_{m}$}}
\end{picture})\{\qb{N-m}{n}\}$. \vspace{1pc}
  \item $\hat{C}_N( \input{decomp-III-1-slide}) \simeq \hat{C}_N( \setlength{\unitlength}{.75pt}
\begin{picture}(60,60)(-30,30)

\put(-20,0){\vector(0,1){60}}

\put(20,60){\vector(0,-1){60}}

\put(-25,30){\tiny{$_1$}}

\put(22,30){\tiny{$_m$}}
\end{picture}) \oplus  \hat{C}_N( \setlength{\unitlength}{.75pt}
\begin{picture}(60,60)(100,30)

\put(110,0){\vector(1,1){20}}

\put(130,20){\vector(1,-1){20}}

\put(130,40){\vector(0,-1){20}}

\put(130,40){\vector(-1,1){20}}

\put(150,60){\vector(-1,-1){20}}

\put(105,0){\tiny{$_1$}}

\put(105,55){\tiny{$_1$}}

\put(152,0){\tiny{$_m$}}

\put(152,55){\tiny{$_m$}}

\put(132,30){\tiny{$_{m-1}$}}

\end{picture})\{[N-m-1]\}.$  \vspace{-1pc}
  \item $\hat{C}_N( \input{decomp-IV-1-slide}) \simeq  \hat{C}_N( \setlength{\unitlength}{.75pt}
\begin{picture}(85,90)(-30,45)

\put(-20,0){\vector(0,1){45}}

\put(-20,45){\vector(0,1){45}}

\put(20,0){\vector(0,1){45}}

\put(20,45){\vector(0,1){45}}

\put(20,45){\vector(-1,0){40}}

\put(-27,20){\tiny{$_1$}}

\put(23,20){\tiny{$_{m+l-1}$}}

\put(-27,65){\tiny{$_l$}}

\put(23,65){\tiny{$_m$}}

\put(-5,38){\tiny{$_{l-1}$}}

\end{picture}) \{\qb{m-1}{n}\} \oplus  \hat{C}_N( \setlength{\unitlength}{.75pt}
\begin{picture}(60,90)(110,45)

\put(110,0){\vector(2,3){20}}

\put(150,0){\vector(-2,3){20}}

\put(130,30){\vector(0,1){30}}

\put(130,60){\vector(-2,3){20}}

\put(130,60){\vector(2,3){20}}

\put(117,20){\tiny{$_1$}}

\put(140,20){\tiny{$_{m+l-1}$}}

\put(117,65){\tiny{$_l$}}

\put(140,65){\tiny{$_m$}}

\put(133,42){\tiny{$_{m+l}$}}
\end{picture}) \{\qb{m-1}{n-1}\}.$ \vspace{1pc}
  \item $\hat{C}_N(\input{decomp-V-1-slide}) \simeq \bigoplus_{j=\max\{m-n,0\}}^m  \hat{C}_N( \input{decomp-V-2-slide})\{\qb{l}{k-j}\}$.  \vspace{2.5pc}
\end{enumerate}
Here, ``$\simeq$" means that there is a homogeneous homotopy equivalence of chain complexes of graded matrix factorizations between the two sides that preserves the $\zed_2$-grading, the quantum grading and the homological grading.

The above relations remain true if we reverse the orientation of the MOY graphs on both sides or reverse the orientation of $\mathbb{R}^2$.

For a MOY graph $\Gamma$, denote by $H_N^j(\Gamma)$ the homogeneous part of $H_N(\Gamma)$ of quantum degree $j$. Then 
\[
\sum_j q^j \dim_\C H_N^j(\Gamma) = \left\langle \Gamma \right\rangle_N.
\]
In other words, the graded dimension of $H_N(\Gamma)$ is equal to $\left\langle \Gamma \right\rangle_N$.
\end{theorem}

\begin{theorem}\cite{Wu-color}\label{thm-MOY-knotted-invariance}
\begin{enumerate}
  \item $\hat{C}_N(\setlength{\unitlength}{1pt}
\begin{picture}(40,20)(-20,20)

\put(0,0){\line(0,1){8}}

\put(0,12){\vector(0,1){8}}

\put(0,20){\vector(1,1){20}}

\put(0,20){\vector(-1,1){20}}

\put(-20,10){\vector(1,0){40}}

\put(-13,35){\tiny{$_m$}}

\put(10,35){\tiny{$_l$}}

\put(3,2){\tiny{$_{m+l}$}}

\put(10,13){\tiny{$_n$}}

\end{picture}) \simeq \hat{C}_N(\setlength{\unitlength}{1pt}
\begin{picture}(40,20)(-20,20)

\put(0,0){\vector(0,1){20}}

\put(0,20){\line(1,1){8}}

\put(0,20){\line(-1,1){8}}

\put(12,32){\vector(1,1){8}}

\put(-12,32){\vector(-1,1){8}}

\put(-20,30){\vector(1,0){40}}

\put(-13,35){\tiny{$_m$}}

\put(10,35){\tiny{$_l$}}

\put(3,2){\tiny{$_{m+l}$}}

\put(12,26){\tiny{$_n$}}

\end{picture})\vspace{1.5pc}$. We call the difference between the two knotted MOY graphs here a fork sliding. The homotopy equivalence remains true if we reverse the orientation of one or both strands involved, or if the horizontal strand is under the vertex instead of above it. See the full statement in \cite[Theorem 12.1]{Wu-color}.
  \item For a knotted MOY graph $D$, the homotopy type of $\hat{C}_N(D)$, with its three gradings, is invariant under regular Reidemeister moves.
  \item $\hat{C}_N(\setlength{\unitlength}{.75pt}
\begin{picture}(30,20)(-10,7)
\put(-10,0){\line(1,1){12}}

\put(-2,12){\line(-1,1){8}}

\qbezier(2,12)(10,20)(10,10)

\qbezier(2,8)(10,0)(10,10)

\put(12,12){\tiny{$m$}}

\end{picture}) = \hat{C}_N(\setlength{\unitlength}{.75pt}
\begin{picture}(20,20)(-10,7)
\qbezier(-10,0)(10,10)(-10,20)

\put(3,12){\tiny{$m$}}
\end{picture})\left\langle m\right\rangle \| m \| \{q^{-m(N+1-m)}\}$
  \item For a knotted MOY graph $D$, define the homology $\hat{H}_N(D)$ of $\hat{C}_N(D)$ as in \cite[Subsection 1.2]{Wu-color}. Then the graded Euler characteristic of $\hat{H}_N(D)$ is equal to $\left\langle D \right\rangle_N$.
\end{enumerate}
\end{theorem}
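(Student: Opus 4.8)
The plan is to establish all four parts by the standard bootstrap for Khovanov--Rozansky-type homology: given a knotted MOY graph $D$, resolve every crossing into the complex of MOY graphs prescribed by the skein formulas \eqref{MOY-skein-general-+} and \eqref{MOY-skein-general--}, assemble the total complex $\hat{C}_N(D)$ of graded matrix factorizations, and simplify it with two tools --- the MOY decompositions of Theorem \ref{thm-MOY-calculus}, which split the matrix factorization of a MOY graph into a direct sum of matrix factorizations of simpler MOY graphs with explicit $q$-binomial shifts, and Gaussian elimination, which deletes an acyclic direct summand of a chain complex. I would treat the parts in the order (1), (2), (3), (4): the RIII case of (2) rests on the fork slides of (1), while (3) and (4) are short once (1)--(2) are in hand.

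Part (1), fork sliding, is the linchpin and I would do it first. On the left-hand side the $n$-colored strand crosses over the thick edge colored $m+l$ below the trivalent vertex; on the right-hand side it crosses over the $m$- and $l$-colored strands above the vertex. I would expand $\hat{C}_N$ of each side as a complex of MOY graphs, decompose the regions around the crossings using the square/box relations (5)--(7) of Theorem \ref{thm-MOY-calculus}, and check that both complexes become supported on the same indexed family of matrix factorizations of simpler MOY graphs. Off-diagonal differential components between summands of unequal quantum degree are then forced to be the Gaussian-eliminable ones, and after those eliminations the two complexes coincide term by term, giving the homotopy equivalence. The orientation variants --- reversing one or both strands, or placing the horizontal strand under the vertex --- follow from the ``reverse the orientation of the MOY graphs / of $\mathbb{R}^2$'' symmetries recorded at the end of Theorem \ref{thm-MOY-calculus}, together with the crossing-sign symmetry relating \eqref{MOY-skein-general-+} and \eqref{MOY-skein-general--}.

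Part (2): the regular RII move goes exactly as part (1) --- resolve the two crossings, decompose via Theorem \ref{thm-MOY-calculus}(5)--(7), Gaussian-eliminate, and recognize the identity tangle complex; since only framed invariance is asserted, one must check that no shift appears in either the quantum or the homological direction. RIII is the genuinely hard case: after resolving all three crossings and applying the decomposition relations, one is reduced to sliding a strand past a triple point, which is accomplished by combining the fork slides of (1) with RII in the customary way. The $\zed_2$-grading is preserved automatically, since every homotopy equivalence produced is assembled from $\zed_2$-homogeneous morphisms of matrix factorizations. For part (3), resolving the self-crossing of the curl on the $m$-colored strand via \eqref{MOY-skein-general-+} and collapsing the resulting loops on the strand by the circle-on-a-strand relations (3)/(4) of Theorem \ref{thm-MOY-calculus} yields, after summing the binomial contributions against the signs and powers of \eqref{MOY-skein-general-+}, a single-monomial shift --- precisely $\left\langle m\right\rangle\|m\|\{q^{-m(N+1-m)}\}$. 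Part (4) is the most elementary: compute the graded Euler characteristic of $\hat{C}_N(D)$ resolution by resolution --- each complete resolution is a MOY graph $\Gamma$ whose complex is concentrated in one homological degree and contributes $\sum_j q^j\dim_\C H_N^j(\Gamma) = \left\langle\Gamma\right\rangle_N$ by the last assertion of Theorem \ref{thm-MOY-calculus}, and the resolutions enter with exactly the signs and $q$-powers of \eqref{MOY-skein-general-+}, \eqref{MOY-skein-general--}; hence the graded Euler characteristic of $\hat{H}_N(D)$, which agrees with that of $\hat{C}_N(D)$, equals $\left\langle D\right\rangle_N$ by Definition \ref{def-MOY-graph-poly}.

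The main obstacle is, predictably, the bookkeeping in part (1) and in the RIII case of part (2): the decomposition relations generate a large collection of MOY-graph summands, and one must verify that every off-diagonal differential component either vanishes for degree reasons or is an isomorphism that cancels under Gaussian elimination. The way to keep this manageable is to organize the decompositions of the two sides with a common indexing of summands, so that the two complexes become literally equal before any homotopy is invoked; the argument is then lengthy but not subtle.
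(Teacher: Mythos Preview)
The paper does not prove this theorem at all: it is stated with the attribution \cite{Wu-color} and no argument is given in the present paper. So there is nothing here to compare your proposal against; the ``paper's own proof'' is simply a citation to the original reference, where the full arguments (fork sliding in \cite[Theorem 12.1]{Wu-color}, Reidemeister invariance in \cite[Theorems 13.1--14.1]{Wu-color}, the RI shift, and the Euler-characteristic identification in \cite[Theorem 14.7]{Wu-color}) are carried out.

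That said, your sketch is the correct high-level shape of what the cited reference does: resolve crossings into complexes of MOY graphs, apply the MOY decomposition relations to split terms, and cancel acyclic subcomplexes via Gaussian elimination. Two caveats. First, in the colored setting the RIII and fork-sliding arguments are substantially more intricate than your paragraph suggests; in \cite{Wu-color} they are not handled by a single pass of ``organize summands with a common indexing so the complexes become literally equal,'' but rather by an inductive scheme on colors together with explicit morphism computations to identify which differential components are isomorphisms --- degree considerations alone do not pin down all the off-diagonal maps. Second, your derivation of part (3) is too glib: the resolutions of the curl do not collapse to a single monomial shift merely by relations (3)/(4) of Theorem \ref{thm-MOY-calculus}; one needs the full square/associativity relations and a careful cancellation to see the complex is homotopic to a single term with the stated shift. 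These are not fatal to your outline, but they are exactly the places where the real work in \cite{Wu-color} lies.
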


\begin{remark}\label{homology-grading-conventions}
In the above theorems,
\begin{enumerate}[1.]
	\item ``$\left\langle \ast \right\rangle$" means shifting the $\zed_2$-grading by $\ast$. (See for example \cite[Subsection 2.3]{Wu-color}.)
	\item ``$\|\ast\|$" means shifting the homological grading up by $\ast$. (See for example \cite[Definition 2.33]{Wu-color}.)
	\item ``$\{F(q)\}$" means shifting the quantum grading up by $F(q)$. (See for example \cite[Subsection 2.1]{Wu-color}.)
	\item Our normalization of the quantum integers is
\[
[j] := \frac{q^j-q^{-j}}{q-q^{-1}}, 
\]
\[
[j]! :=  [1] \cdot [2] \cdots [j],
\]
\[
\qb{j}{k} := \frac{[j]!}{[k]!\cdot [j-k]!}.
\]
\end{enumerate}
\end{remark}

From the definition of $\hat{C}_N(D)$ in \cite[Definitions 11.4 and 11.16]{Wu-color}, we have the following simple lemma.

\begin{lemma}\cite[Lemma 7.3]{Wu-color-ras}\label{lemma-l-N-crossings} \vspace{-1pc}
\begin{equation}\label{eq-l-N-+}
\hat{C}_N (\setlength{\unitlength}{1pt}
\begin{picture}(40,40)(-20,0)

\put(-20,-20){\vector(1,1){40}}

\put(20,-20){\line(-1,1){15}}

\put(-5,5){\vector(-1,1){15}}

\put(-11,15){\tiny{$_l$}}

\put(8,15){\tiny{$_N$}}

\end{picture}) \cong \hat{C}_N (\setlength{\unitlength}{.5pt}
\begin{picture}(85,45)(-40,45)

\put(-20,0){\vector(0,1){45}}

\put(-20,45){\vector(0,1){45}}

\put(20,0){\vector(0,1){45}}

\put(20,45){\vector(0,1){45}}

\put(-20,45){\vector(1,0){40}}

\put(-35,20){\tiny{$_N$}}

\put(25,20){\tiny{$_{l}$}}

\put(-32,65){\tiny{$_l$}}

\put(25,65){\tiny{$_N$}}

\put(-13,38){\tiny{$_{N-l}$}}

\end{picture})\|l\|\{q^{-l}\}, 
\end{equation}
\begin{equation}\label{eq-l-N--}
\hat{C}_N (\setlength{\unitlength}{1pt}
\begin{picture}(40,40)(-20,0)

\put(20,-20){\vector(-1,1){40}}

\put(-20,-20){\line(1,1){15}}

\put(5,5){\vector(1,1){15}}

\put(-11,15){\tiny{$_l$}}

\put(8,15){\tiny{$_N$}}

\end{picture}) \cong \hat{C}_N ()\|-l\|\{q^{l}\}, \vspace{1.5pc}
\end{equation}
Consequently, \vspace{-1pc}
\begin{equation}\label{eq-l-N--to+}
\hat{C}_N () \cong \hat{C}_N ()\|2l\|\{q^{-2l}\}, 
\end{equation}
\begin{equation}\label{eq-l-N-+to-}
\hat{C}_N () \cong \hat{C}_N ()\|-2l\|\{q^{2l}\}. \vspace{1.5pc}
\end{equation}
\end{lemma}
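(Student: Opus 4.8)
The plan is to read the statement off directly from the construction of the colored crossing complex in \cite[Definitions 11.4 and 11.16]{Wu-color}: when one of the two strands at a crossing is colored by $N$, all but one of the MOY-graph resolutions appearing in that complex carry an edge of color exceeding $N$, hence are contractible, so the complex collapses to a single term.

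In detail, for the positive crossing of \eqref{eq-l-N-+} --- the $N$-colored strand running over the $l$-colored strand --- \cite[Definition 11.16]{Wu-color} presents $\hat C_N$ of this crossing as a bounded complex of graded matrix factorizations whose chain objects are the $\hat C_N(\Gamma_k)$, where $\Gamma_0,\dots,\Gamma_l$ are the MOY resolutions occurring in the skein sum \eqref{MOY-skein-general-+} with $m=l$ and $n=N$ (so $k$ runs from $\max\{0,l-N\}=0$ to $l$), each in a prescribed homological degree and with a prescribed quantum shift. In $\Gamma_k$ the edge running along the left strand between the two trivalent vertices is colored $n+k=N+k$, so for every $k\geq 1$ the graph $\Gamma_k$ has an edge of color $>N$; by the conventions of \cite{Wu-color} (compare Definition \ref{def-MOY-graph-poly}, where $\langle\Gamma_k\rangle_N=0$) the matrix factorization $C_N(\Gamma_k)$ is then contractible. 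Deleting these contractible chain objects one at a time --- the standard reduction (Gaussian elimination) for complexes of matrix factorizations --- makes $\hat C_N$ of the crossing homotopy equivalent to the single remaining object $\hat C_N(\Gamma_0)$, with the shifts it carried. For $k=0$ the horizontal edge between the bottom vertices is colored $0$ (hence absent) and the one between the top vertices is colored $N-l$, so $\Gamma_0$ is exactly the ladder MOY graph on the right-hand side of \eqref{eq-l-N-+}. Reading off the shifts from \cite[Definition 11.16]{Wu-color} gives $\|l\|\{q^{-l}\}$; as a check, on graded Euler characteristics (Theorem \ref{thm-MOY-knotted-invariance}(4)) this term contributes $(-1)^l q^{-l}\langle\Gamma_0\rangle_N$, which is what \eqref{MOY-skein-general-+} yields after the same cancellation.

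Equation \eqref{eq-l-N--} follows identically from \eqref{MOY-skein-general--}: the negative-crossing resolutions are the mirror configurations and satisfy the same color constraints, so again only $\Gamma_0$ survives, now carrying the shift $\|-l\|\{q^{l}\}$. Finally, writing $X^{\pm}$ for the two crossings of \eqref{eq-l-N-+} and \eqref{eq-l-N--}, equations \eqref{eq-l-N--to+} and \eqref{eq-l-N-+to-} are formal consequences: \eqref{eq-l-N-+} gives $\hat C_N(\Gamma_0)\simeq\hat C_N(X^+)\|-l\|\{q^{l}\}$, which substituted into \eqref{eq-l-N--} yields $\hat C_N(X^-)\simeq\hat C_N(X^+)\|-2l\|\{q^{2l}\}$, and the reverse substitution gives $\hat C_N(X^+)\simeq\hat C_N(X^-)\|2l\|\{q^{-2l}\}$. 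The only real obstacle is bookkeeping: the several normalizations entering \cite[Definitions 11.4 and 11.16]{Wu-color} must be tracked carefully enough to land on exactly $\|l\|\{q^{-l}\}$ and $\|-l\|\{q^{l}\}$. Conceptually, the lemma is just the observation that a colored crossing involving an $N$-colored strand has no higher resolutions --- every resolution but the oriented ladder one carries an over-colored edge and therefore vanishes.
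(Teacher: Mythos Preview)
Your proposal is correct and is exactly the argument the paper has in mind: the paper gives no proof beyond the sentence ``From the definition of $\hat{C}_N(D)$ in \cite[Definitions 11.4 and 11.16]{Wu-color}, we have the following simple lemma,'' together with the citation to \cite[Lemma 7.3]{Wu-color-ras}. You have simply unpacked that sentence --- all resolutions in the crossing complex except $\Gamma_0$ carry an edge of color $>N$ and hence vanish, leaving a one-term complex with the stated shifts --- which is precisely how the lemma follows from the definitions.
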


We will also need the following lemma.

\begin{lemma}\label{lemma-twisted-forks} 
$~$\vspace{-1pc}
\begin{equation}\label{eq-twisted-forks-+}
\hat{C}_N(\setlength{\unitlength}{1pt}
\begin{picture}(40,40)(-20,20)

\put(0,0){\vector(0,1){10}}

\put(5,3){\tiny{$m+n$}}

\qbezier(0,10)(-20,20)(0,30)

\put(0,30){\vector(2,1){20}}

\qbezier(0,10)(20,20)(4,28)

\put(15,32){\tiny{$n$}}

\put(-4,32){\vector(-2,1){16}}

\put(-18,32){\tiny{$m$}}

\end{picture}) \simeq \hat{C}_N(\setlength{\unitlength}{1pt}
\begin{picture}(40,40)(-20,20)

\put(0,0){\vector(0,1){20}}

\put(5,7){\tiny{$m+n$}}

\put(0,20){\vector(1,1){20}}

\put(12,25){\tiny{$n$}}

\put(0,20){\vector(-1,1){20}}

\put(-15,25){\tiny{$m$}}

\end{picture})\{q^{mn}\},
\end{equation}
\begin{equation}\label{eq-twisted-forks--}
\hat{C}_N(\setlength{\unitlength}{1pt}
\begin{picture}(40,40)(-20,20)

\put(0,0){\vector(0,1){10}}

\put(5,3){\tiny{$m+n$}}

\qbezier(0,10)(20,20)(0,30)

\put(0,30){\vector(-2,1){20}}

\qbezier(0,10)(-20,20)(-4,28)

\put(15,32){\tiny{$n$}}

\put(4,32){\vector(2,1){16}}

\put(-18,32){\tiny{$m$}}

\end{picture}) \simeq \hat{C}_N()\{q^{-mn}\}. \vspace{2pc}
\end{equation}
The above relations remain true if we reverse the orientation of the knotted MOY graphs on both sides.
\end{lemma}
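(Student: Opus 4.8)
The plan is to prove \eqref{eq-twisted-forks-+}; equation \eqref{eq-twisted-forks--} then follows from an entirely analogous computation with the crossing resolution \eqref{MOY-skein-general--} in place of \eqref{MOY-skein-general-+}, and the orientation-reversed forms follow from the corresponding orientation-reversal statement in Theorem~\ref{thm-MOY-calculus}. The left-hand side of \eqref{eq-twisted-forks-+} is the knotted MOY graph consisting of a trivalent vertex merging the colors $m$ and $n$ into $m+n$, followed (reading upward) by one crossing in which the color-$n$ strand runs over the color-$m$ strand. Using the chain-level refinement of the colored skein relation \eqref{MOY-skein-general-+} at this crossing (see \cite{Wu-color}), $\hat{C}_N$ of the twisted fork is an iterated mapping cone whose $k$-th term, for $\max\{0,m-n\}\le k\le m$, is $\hat{C}_N$ of the ladder graph appearing on the right of \eqref{MOY-skein-general-+}, equipped with homological and quantum shifts governed by the coefficient $(-1)^{m-k}q^{k-m}$.

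The next step is to precompose each ladder term with the merge vertex and simplify it by the MOY calculus of Theorem~\ref{thm-MOY-calculus}. Capping the bottom two legs of the $k$-th ladder with the merge vertex gives a planar MOY graph $\Gamma_k$; re-bracketing its tree of trivalent vertices by associativity (Theorem~\ref{thm-MOY-calculus}(2)) produces a digon with inner colors $n$ and $k$, which is removed via Theorem~\ref{thm-MOY-calculus}(3) at the cost of a quantum shift $\{\qb{n+k}{k}\}$; one further application of associativity produces a digon with inner colors $n+k-m$ and $m-k$, removed via Theorem~\ref{thm-MOY-calculus}(3) with a shift $\{\qb{n}{m-k}\}$. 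Because the bottom edge has color $m+n\le N$, every intermediate color in this reduction is at most $N$, so the relations apply, and one gets $\hat{C}_N(\Gamma_k)\simeq\hat{C}_N(\text{untwisted fork})\{\qb{n+k}{k}\qb{n}{m-k}\}$.

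Consequently $\hat{C}_N$ of the twisted fork is homotopy equivalent to a complex all of whose terms are quantum-shifted copies of $\hat{C}_N$ of the untwisted fork. Its graded Euler characteristic is $\bigl(\sum_{k=\max\{0,m-n\}}^{m}(-1)^{m-k}q^{k-m}\qb{n+k}{k}\qb{n}{m-k}\bigr)\langle\text{untwisted fork}\rangle_N$, and the Gaussian-binomial sum in parentheses equals $q^{mn}$ (a short induction on $m$ and $n$, or a direct manipulation of quantum binomials). To finish, I would run Gaussian elimination on this complex: after the identifications above, the components of the differentials between consecutive terms are split surjections, just as in the proof of Reidemeister invariance of $\hat{C}_N$ in \cite{Wu-color}, so the complex collapses to a single copy of $\hat{C}_N(\text{untwisted fork})$ in homological degree $0$ with an overall quantum shift of $q^{mn}$, which is \eqref{eq-twisted-forks-+}.

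The step I expect to be the main obstacle is this last collapse: one must verify that the differentials of the colored-crossing complex, transported through the associativity and digon-removal homotopy equivalences, really do have the maximal possible rank between consecutive ladder contributions, which forces one to track the explicit chain maps through those equivalences. A cleaner but more laborious alternative that sidesteps the general colored-crossing complex is to induct on $\min\{m,n\}$, decomposing one leg of the fork as a merge with a color-$1$ edge (associativity) so as to reduce the crossing in question to crossings involving a color-$1$ strand, which are directly controlled by Theorem~\ref{thm-MOY-calculus}(5) together with Lemma~\ref{lemma-l-N-crossings}.
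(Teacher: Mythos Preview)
Your main approach has a genuine gap at exactly the point you flag. After simplifying each resolution term $\Gamma_k$ to a shifted copy of the untwisted fork, you must show that the transported differentials in the resulting complex have maximal rank so that Gaussian elimination collapses everything to a single term. This is not automatic: the MOY-calculus homotopy equivalences (associativity, digon removal) are not canonical, and pushing the crossing-complex differentials through them requires explicit control of chain maps that you have not supplied. The analogy with the Reidemeister proofs in \cite{Wu-color} is only suggestive; there the relevant surjectivity is verified by hand in each case, and there is no general principle forcing collapse here. Without that verification, your Euler-characteristic identity tells you what the answer would be \emph{if} the complex collapsed, not that it does.

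Your alternative inductive sketch is closer in spirit to what actually works, but the specific tools you name are off: Lemma~\ref{lemma-l-N-crossings} concerns crossings with a strand colored $N$, not $1$, and Theorem~\ref{thm-MOY-calculus}(5) is a planar relation that does not by itself untwist a fork. The paper's proof inducts on $n$ (not $\min\{m,n\}$), takes the base case $n=1$ from \cite[Proposition~6.1]{Yonezawa3}, and in the inductive step splits the color-$(n{+}1)$ leg into an $(n,1)$ bubble (introducing a factor $[n{+}1]$), then uses \emph{fork sliding} (Theorem~\ref{thm-MOY-knotted-invariance}(1)) together with associativity to separate the single crossing into two crossings to which the base case and the hypothesis apply; the extraneous $[n{+}1]$ is cancelled at the end via \cite[Proposition~3.20]{Wu-color}. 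The advantage of this route over yours is that every step is a pre-established chain-level homotopy equivalence, so no differential-chasing through simplifications is ever needed.
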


\begin{proof}
We prove \eqref{eq-twisted-forks-+} only. The proof of \eqref{eq-twisted-forks--} is similar and left to the reader. 

To prove \eqref{eq-twisted-forks-+}, we induce on $n$. When $n=1$, \eqref{eq-twisted-forks-+} is proved in \cite[Proposition 6.1]{Yonezawa3}. Assume \eqref{eq-twisted-forks-+} is true for $n$. By Theorems \ref{thm-MOY-calculus}, \ref{thm-MOY-knotted-invariance} and the induction hypothesis, we have \vspace{-1pc}
\begin{eqnarray*}
&& \hat{C}_N(\setlength{\unitlength}{1pt}
\begin{picture}(40,40)(-20,20)

\put(0,0){\vector(0,1){10}}

\put(5,3){\tiny{$m+n+1$}}

\qbezier(0,10)(-20,20)(0,30)

\put(0,30){\vector(2,1){20}}

\qbezier(0,10)(20,20)(4,28)

\put(15,32){\tiny{$n+1$}}

\put(-4,32){\vector(-2,1){16}}

\put(-18,32){\tiny{$m$}}

\end{picture})\{[n+1]\} \simeq \hat{C}_N(\input{MOY-vertex-twisted-induction-2}) \simeq \hat{C}_N(\input{MOY-vertex-twisted-induction-3}) \\
&& \\
& \simeq & \hat{C}_N(\input{MOY-vertex-twisted-induction-4}) \simeq \hat{C}_N(\input{MOY-vertex-twisted-induction-5})\{q^m\} \simeq \hat{C}_N(\input{MOY-vertex-twisted-induction-6})\{q^m\} \\
&& \\
& \simeq & \hat{C}_N(\input{MOY-vertex-twisted-induction-7})\{q^{m(n+1)}\} \simeq \hat{C}_N(\input{MOY-vertex-twisted-induction-8})\{q^{m(n+1)}\}   \\
& \simeq & \hat{C}_N(\setlength{\unitlength}{1pt}
\begin{picture}(40,40)(-20,20)

\put(0,0){\vector(0,1){20}}

\put(5,7){\tiny{$m+n+1$}}

\put(0,20){\vector(1,1){20}}

\put(14,30){\tiny{$n+1$}}

\put(0,20){\vector(-1,1){20}}

\put(-15,25){\tiny{$m$}}

\end{picture})\{q^{m(n+1)}[n+1]\}. 
\end{eqnarray*} \vspace{1pc}

\noindent By \cite[Proposition 3.20]{Wu-color}, the above implies that \vspace{-1pc}
\[
\hat{C}_N() \simeq  \hat{C}_N() \{q^{m(n+1)}\}. \vspace{1.5pc}
\] 
This completes the induction and proves \eqref{eq-twisted-forks-+}.
\end{proof}

From \cite[Theorems 1.3 and 14.7]{Wu-color}, we know that the $\zed_2$-grading of $C_N$ and $\hat{C}_N$ is always pure and does not carry any significant information. So we do not keep track of this $\zed_2$-grading in the remainder of this paper. The next is the main theorem of this subsection.

\begin{theorem}\label{thm-oc-reverse-homology}
Let $\Gamma$ be a MOY graph and $\Delta$ a simple circuit of $\Gamma$. Denote by $\Gamma'$ the MOY graph obtained from $\Gamma$ by reversing the orientation and the color of edges along $\Delta$. Then, up to an overall shift of the $\zed_2$-grading, we have
\begin{equation}\label{eq-oc-reverse-homology}
C_N(\Gamma) \simeq C_N(\Gamma').
\end{equation}
\end{theorem}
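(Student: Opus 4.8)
The plan is to exhibit the homotopy equivalence \eqref{eq-oc-reverse-homology} directly, by a finite sequence of moves from the MOY calculus recalled in Theorems \ref{thm-MOY-calculus} and \ref{thm-MOY-knotted-invariance} and in Lemmas \ref{lemma-l-N-crossings}, \ref{lemma-twisted-forks}; the polynomial identity of Theorem \ref{thm-oc-reverse} will enter only at the very end, as a bookkeeping device for the grading shifts. First I would localize the problem. Since the reversal $\Delta\leadsto\Delta'$ leaves the colors \emph{and} the orientations of the branch edges meeting $\Delta$ unchanged, only a neighborhood of $\Delta$ is affected: fix a regular-neighborhood annulus $A\subset\mathbb{R}^2$ of $\Delta$ so that $\Gamma$ and $\Gamma'$ agree outside $A$ and meet $\partial A$ in the same colored points. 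Because $C_N(\cdot)$ is assembled locally from the vertices (as in \cite{Wu-color}), it suffices to construct a homogeneous homotopy equivalence between the factorizations of $\Gamma\cap A$ and $\Gamma'\cap A$ fixing this boundary data.

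The engine of the construction is an auxiliary $N$-colored companion of $\Delta$. Using Theorem \ref{thm-MOY-calculus}(1) with $m=N$ (so $\hat C_N(\bigcirc_N)\simeq\C$, as $\qb{N}{N}=1$), one adjoins inside $A$ a circle $\delta$ colored $N$ running parallel and close to $\Delta$, without changing $C_N(\Gamma\cap A)$; equivalently one sprouts such an $N$-colored segment locally via Theorem \ref{thm-MOY-calculus}(4), whose shift $\qb{N-m}{n}$ collapses to $1$ exactly when the two colors sum to $N$. One then travels once around $\Delta$, processing it edge by edge and vertex by vertex: along an edge colored $k$ lying beside an $N$-colored arc of $\delta$, the digon relations Theorem \ref{thm-MOY-calculus}(3)--(4), together with the fork slides of Theorem \ref{thm-MOY-knotted-invariance}(1) and the untwisting of Lemma \ref{lemma-twisted-forks}, interchange the $k$-arc with the $N$-arc and leave behind an edge colored $N-k$ with reversed orientation; at a vertex of $\Delta$ the attached branch edge is carried across $\delta$ by a pair of crossings which are then removed by the $l$-$N$ crossing Lemma \ref{lemma-l-N-crossings}, and the vertex is flipped. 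After one full circuit, $\Delta$ has become $\Delta'$ and $\delta$ is again a plain $N$-colored circle, which is discarded by Theorem \ref{thm-MOY-calculus}(1). Tallying the accumulated quantum and homological shifts, the total shift relating $C_N(\Gamma\cap A)$ to $C_N(\Gamma'\cap A)$ is a single $q$-power times a homological shift; but Theorem \ref{thm-oc-reverse} already gives $\langle\Gamma\rangle_N=\langle\Gamma'\rangle_N$ and $H_N$ of a MOY graph sits in homological degree $0$, so this shift is trivial except possibly on the $\zed_2$-grading, which is \eqref{eq-oc-reverse-homology}.

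The main obstacle is the middle step: pinning down, case by case in the local type of an edge or vertex of $\Delta$ and in the orientations present, the explicit chain of moves from Theorems \ref{thm-MOY-calculus}, \ref{thm-MOY-knotted-invariance} and Lemmas \ref{lemma-l-N-crossings}, \ref{lemma-twisted-forks} that slides the $N$-colored companion past it, and then verifying that these local manipulations glue consistently all the way around $\Delta$ --- in particular that the twists created when $\delta$ is dragged past the branch vertices are precisely those absorbed by Lemma \ref{lemma-twisted-forks}. As with the proof of Theorem \ref{thm-Jaeger-formula-graph}, this amounts to a straightforward but lengthy tabulation, so I would give full details for one representative configuration (say the one of Figure \ref{fig-MOY-vertex-change}) and leave the remaining cases to the reader; one should also check that the grading tally in the last step genuinely reproves the relevant special case of Theorem \ref{thm-oc-reverse} rather than using it circularly.
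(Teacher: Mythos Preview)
Your proposal is correct and is essentially the same argument as the paper's proof: both introduce an auxiliary $N$-colored companion to $\Delta$, use the same toolkit (Theorems \ref{thm-MOY-calculus}, \ref{thm-MOY-knotted-invariance} and Lemmas \ref{lemma-l-N-crossings}, \ref{lemma-twisted-forks}) to swap each edge of $\Delta$ from color $k$ to $N-k$ with reversed orientation while sliding the branch edges across, and then dispose of the leftover $N$-colored unknot via $\qb{N}{N}=1$; both then invoke Theorem \ref{thm-oc-reverse} to conclude that the accumulated quantum shift vanishes. The only organizational difference is that you adjoin the full $N$-circle at the outset and then process around $\Delta$, whereas the paper works vertex-by-vertex first (Step One), then edge-by-edge (Step Two), discovering the $N$-colored unknot as a byproduct to be removed at the end (Step Three); and your worry about circularity is unnecessary, since Theorem \ref{thm-oc-reverse} was proved directly from the state sum and is independent of the homology.
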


\begin{proof}
We construct the homotopy equivalence in \eqref{eq-oc-reverse-homology} in three steps.

\emph{Step One: Modifying vertices.} Let $v$ be a vertex in $\Delta$. We demonstrate in Figure \ref{oc-reverse-vertex-fig} how to modify $v$ into its corresponding vertex $v'$ in $\Gamma'$ using homogeneous homotopy equivalence. Here, we assume that the half edges $e$ and $e_1$ belong to $\Delta$.\footnote{As mentioned in the proof of Theorem \ref{thm-oc-reverse}, depending on the type of $v$ (splitting or merging) and the choice of the two edges belonging to $\Delta$, there are four possible local configurations of $\Delta$ near $v$. (See Figure \ref{rotation-numbers-oc-reverse-index-fig} below.) Figure \ref{oc-reverse-vertex-fig} covers only one of these four possible local configurations of $\Delta$ near $v$. But the other three possibilities are obtained from this one type by a reversal of orientation or horizontal flipping or both. We leave the construction for the other three cases to the reader.}

\begin{figure}[ht]
\vspace{-1pc}
\[
\xymatrix{
\input{oc-reverse-vertex-0}  \ar@{~>}[rr] && \input{oc-reverse-vertex-1} \ar@{~>}[rr] && \input{oc-reverse-vertex-2} \ar@{~>}[lllld]\\
\input{oc-reverse-vertex-3} \ar@{~>}[rr] && \input{oc-reverse-vertex-4}
}
\]
\caption{}\label{oc-reverse-vertex-fig}

\end{figure}

Note that:
\begin{enumerate}
	\item By Theorems \ref{thm-MOY-calculus}, \ref{thm-MOY-knotted-invariance} and Lemmas \ref{lemma-l-N-crossings}, \ref{lemma-twisted-forks}, each change in Figure \ref{oc-reverse-vertex-fig} induces a homogeneous homotopy equivalence.
	\item The upper right vertex in the last step in Figure \ref{oc-reverse-vertex-fig} is identical to the vertex $v'$ in $\Gamma$ corresponding to $v$. 
\end{enumerate}

\emph{Step Two: Modifying edges.} After applying Step One to every vertex along $\Delta$, each edge $e$ along $\Delta$ becomes one of the two configurations in the second row in Figure \ref{oc-reverse-edge-fig}. We further modify these two configurations as in Figure \ref{oc-reverse-edge-fig}. 

\begin{figure}[ht]
\vspace{-1pc}
\[
\xymatrix{
&& \input{h-vector-m-} \ar@{-->}[drr]^{\text{Case 2}} \ar@{-->}[dll]_{\text{Case 1}} && \\
\input{oc-reverse-edge-1} \ar@{~>}[d] && \text{or} && \input{oc-reverse-edge-2} \ar@{~>}[d] \\
\input{oc-reverse-edge-3}  &&  && \input{oc-reverse-edge-4} \ar@{~>}[d] \\
&&&&  \input{oc-reverse-edge-5}
}
\]
\caption{}\label{oc-reverse-edge-fig}

\end{figure}

Note that:

\begin{enumerate}
	\item By Theorem \ref{thm-MOY-calculus} and Lemmas \ref{lemma-l-N-crossings}, each change made to these to configurations in Figure \ref{oc-reverse-edge-fig} induces a homogeneous homotopy equivalence.
	\item At every crossing, the branch colored by $N$ is on top. 
\end{enumerate}

\emph{Step Three: Removing the unknot.} After applying Step Two to every edge along $\Delta$, we get a knotted MOY graph $D$ consisting of $\Gamma'$ and an unknot colored by $N$ that is everywhere above $\Gamma'$. We can move this unknot away from $\Gamma'$ using regular Reidemeister moves and fork sliding (given in Part (1) of Theorem \ref{thm-MOY-knotted-invariance}) and obtain a MOY graph $\widetilde{\Gamma}$. By Theorem \ref{thm-MOY-knotted-invariance}, these moves induce a homogeneous homotopy equivalence. By Part (1) of Theorem \ref{thm-MOY-calculus}, we know that removing this unknot from $\widetilde{\Gamma}$ induces a homogeneous homotopy equivalence.

Putting \textit{Steps One--Three} together, we get a homogeneous homotopy equivalence\footnote{Strictly speaking, there are two notions of homotopy equivalence involved here. That is, homotopy equivalence of matrix factorizations and homotopy equivalence of complexes of matrix factorizations. But it is easy to see that, for $C_N(\Gamma)$ and $C_N(\Gamma')$, these two notions are equivalent.} from $C_N(\Gamma)$ to $C_N(\Gamma')$. It remains to check that this homotopy equivalence preserves the quantum grading. But, by Theorem \ref{thm-MOY-calculus}, the graded dimensions of the homology of $C_N(\Gamma)$ and $C_N(\Gamma')$ are equal to $\left\langle \Gamma\right\rangle_N$ and $\left\langle \Gamma'\right\rangle_N$, which are equal by Theorem \ref{thm-oc-reverse}. So any homotopy equivalence from $C_N(\Gamma)$ to $C_N(\Gamma')$ must preserve the quantum grading. This completes the proof.
\end{proof}

Using similar techniques, we can prove that reversing the orientation and the color of a component of a link colored by elements of $\{0,1,\dots,N\}$ only changes the $\mathfrak{sl}(N)$ link homology by a grading shift.

Let $L$ be an oriented framed link in $S^3$ colored by elements of $\{0,1,\dots,N\}$. Denote by $\mathcal{K}$ the set of components of $L$ and by $\mathsf{c}: \mathcal{K} \rightarrow \{0,1,\dots,N\}$ the color function of $L$. That is, for any component $K$ of $L$, the color of $K$ is $\mathsf{c}(K) \in \{0,1,\dots,N\}$. Furthermore, for any component $K$ of $L$, denote by $w(K)$ the writhe of $K$ and, for any two components $K, ~K'$ of $L$, denote by $l(K,K')$ the linking number of $K, ~K'$.

\begin{theorem}\label{thm-oc-reverse-homology-link}
Suppose $K$ is a component of $L$ and the colored framed oriented link $L'$ is obtained from $L$ by reversing the orientation and the color of $K$. Then, up to an overall shift of the $\zed_2$-grading, 
\begin{equation}\label{eq-oc-reverse-homology-link}
\hat{C}_N(L') \simeq \hat{C}_N(L) ~\| s \|~ \{ q^{-s}\},
\end{equation}
where
\[
s = (N-2\mathsf{c}(K))w(K) - 2\sum_{K' \in \mathcal{K}\setminus \{K\}} \mathsf{c}(K') l(K,K').
\]
In particular, 
\begin{equation}\label{eq-oc-reverse-poly-link}
\left\langle L' \right\rangle_N =  (-1)^{N w(K)} \cdot q^{-s} \cdot \left\langle L \right\rangle_N.
\end{equation}
\end{theorem}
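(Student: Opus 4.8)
The plan is to adapt, to the link setting, the three-step construction used to prove Theorem~\ref{thm-oc-reverse-homology}. Fix a diagram $D$ of $L$ and let $D'$ be the corresponding diagram of $L'$. A link component carries no trivalent vertices, so the role played there by \emph{Step One} (modifying vertices) is taken over by introducing, near each crossing of $D$ lying on $K$, a small $N$-colored local configuration; \emph{Step Two} joins these configurations along the edges of $K$ and at the same time reverses the color and orientation of $K$, turning $D$ into $D'\cup\widehat{K}$, where $\widehat{K}$ is an $N$-colored copy of $K$, parallel with respect to the blackboard framing and lying everywhere above $D'$; \emph{Step Three} removes $\widehat{K}$, applying Lemma~\ref{lemma-l-N-crossings} at each of its crossings (where the branch colored $N$ is on top) and then Theorems~\ref{thm-MOY-calculus} and~\ref{thm-MOY-knotted-invariance}. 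By Theorems~\ref{thm-MOY-calculus} and~\ref{thm-MOY-knotted-invariance} and Lemmas~\ref{lemma-l-N-crossings} and~\ref{lemma-twisted-forks}, every elementary move in this construction is a homogeneous homotopy equivalence, up to an overall shift of the $\zed_2$-grading which we disregard; hence
\[
\hat{C}_N(L') \simeq \hat{C}_N(L)\,\|a\|\,\{q^{b}\}
\]
for some integers $a$ and $b$, a priori depending on $D$.

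Next I would show that $a$ and $b$ localize. The construction alters $D$ only inside a regular neighborhood of $K$, and $D$ and $D'$ agree outside that neighborhood, so the homotopy equivalence above decomposes as a tensor product of local homotopy equivalences indexed by the crossings of $D$ lying on $K$, the contribution of every other crossing being trivial. Thus $a=\sum_x a_x$ and $b=\sum_x b_x$, the sum running over the crossings $x$ of $D$ on $K$, with $a_x$ and $b_x$ determined by the local type of $x$. It remains to evaluate these in the two relevant cases. If $x$ is a self-crossing of $K$ (its sign $\eps_x$ is unchanged when $K$ is reversed), then specializing the local move to a Reidemeister~(I) kink and invoking Theorem~\ref{thm-MOY-knotted-invariance}(3) gives $a_x = \eps_x\bigl(N-2\mathsf{c}(K)\bigr)$ and $b_x = -\eps_x\bigl(N-2\mathsf{c}(K)\bigr)$. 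If $x$ is a crossing of $K$ with some other component $K'$ (its sign $\eps_x$ is negated when $K$ is reversed), then applying Lemma~\ref{lemma-l-N-crossings} to the branch of $\widehat{K}$ passing over $K'$ gives $a_x = -\eps_x\mathsf{c}(K')$ and $b_x = \eps_x\mathsf{c}(K')$. Summing, and recalling that $w(K)$ is the sum of the $\eps_x$ over the self-crossings of $K$ and that $2\,l(K,K')$ is the sum of the $\eps_x$ over the crossings between $K$ and $K'$, we obtain $a=s$ and $b=-s$, which is~\eqref{eq-oc-reverse-homology-link}.

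The identity~\eqref{eq-oc-reverse-poly-link} then follows by taking graded Euler characteristics in~\eqref{eq-oc-reverse-homology-link} and using Theorem~\ref{thm-MOY-knotted-invariance}(4): one gets $\left\langle L'\right\rangle_N = (-1)^{s}q^{-s}\left\langle L\right\rangle_N$, and $(-1)^{s}=(-1)^{Nw(K)}$ since $s\equiv (N-2\mathsf{c}(K))w(K)\equiv Nw(K)\pmod 2$.

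I expect the main obstacle to be the first two steps: writing down the local moves explicitly (there are several local configurations to handle --- a self-crossing versus a mixed crossing, each with a choice of which branch lies on top and each choice of sign), checking that each is a homogeneous homotopy equivalence with the asserted grading shift, and making the localization of $a$ and $b$ precise. Exactly as with the proof of Theorem~\ref{thm-Jaeger-formula-graph} and the three-step argument in the proof of Theorem~\ref{thm-oc-reverse-homology}, this comes down to a lengthy but essentially routine tabulation.
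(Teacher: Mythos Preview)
Your approach is essentially the paper's: produce an $N$-colored parallel copy $\widehat{K}$ of $K$ lying everywhere above $D'$ and then remove it while tracking the grading shifts via Lemma~\ref{lemma-l-N-crossings}. The paper's execution of your Steps One and Two is more direct than your crossing-by-crossing plan: it creates a single $N$-colored bubble at one smooth point of $K$ and slides one vertex all the way around $K$ using fork sliding and Reidemeister~II moves (both grading-preserving), so that the entire shift $s$ is concentrated in the removal step and no localization at individual crossings of $D$ is needed.
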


\begin{figure}[ht]
\vspace{-1pc}
\[
\xymatrix{
\input{h-vector-m-} \ar@{~>}[rr] && \input{h-vector-m-bubble}  \ar@{~>}[rr] && \input{oc-reverse-edge-2} \ar@{~>}[dllll] \\
\input{oc-reverse-edge-4} \ar@{~>}[rr] && \input{oc-reverse-edge-5} &&
}
\]
\caption{}\label{oc-reverse-homology-link-fig1}

\end{figure}

\begin{proof}
Suppose the color of $K$ is $m$. In a small segment of $K$, create a ``bubble" as in the first step in Figure \ref{oc-reverse-homology-link-fig1}. Then, using fork sliding (Part (1) of Theorem \ref{thm-MOY-knotted-invariance}) and Reidemeister moves of type (II), we can push the left vertex of this bubble along $K$ until it is back in the same small segment of $K$. This is shown in step two in Figure \ref{oc-reverse-homology-link-fig1}. The last two steps in Figure \ref{oc-reverse-homology-link-fig1} are local and self-explanatory.  The end result of all these changes is a link $L_1$ consisting of $L'$ and an extra component $\widetilde{K}$ colored by $N$ that is obtained by slightly pushing $K$ in the direction of its framing. (So $\widetilde{K}$ is isotopic to $K$.) By Theorems \ref{thm-MOY-calculus}, \ref{thm-MOY-knotted-invariance} and Lemma \ref{lemma-l-N-crossings}, each step in Figure \ref{oc-reverse-homology-link-fig1} induces a homogeneous homotopy equivalence that preserves both the quantum grading and the homological grading. So 
\[
\hat{C}_N(L_1) \simeq \hat{C}_N(L).
\]

By switching the upper- and lower-branches at crossings, we can unlink $\widetilde{K}$ from every component of $L'$. From relations \eqref{eq-l-N--to+} and \eqref{eq-l-N-+to-} in Lemma \ref{lemma-l-N-crossings}, we know that unlinking $\widetilde{K}$ from a component $K'$ of $L'$ shifts the homological grading by $- 2 \mathsf{c}(K') l(K,K')$ and the quantum grading by $2 \mathsf{c}(K') l(K,K')$. Note that:
\begin{itemize}
	\item If $K'$ is the component of $L'$ obtained by reversing the orientation and the color of $K$, then $\mathsf{c}(K') = N- \mathsf{c}(K)$ and $l(K,K') = -w(K)$.
	\item If $K'$ is any other component of $L'$, then $K'$ is also a component of $L$. More precisely, $K' \in \mathcal{K}\setminus \{K\}$.
\end{itemize}
Thus, unlinking $\widetilde{K}$ from $L'$ shifts the homological grading by $$2(N-\mathsf{c}(K))w(K) - 2\sum_{K' \in \mathcal{K}\setminus \{K\}} \mathsf{c}(K') l(K,K') = Nw(K) + s$$ and the quantum grading by $$-2(N-\mathsf{c}(K))w(K) + 2\sum_{K' \in \mathcal{K}\setminus \{K\}} \mathsf{c}(K') l(K,K') = -Nw(K) - s.$$
In other words, we have
\[
\hat{C}_N(L' \sqcup \widetilde{K}) \simeq \hat{C}_N(L_1) ~\| Nw(K) + s \|~ \{ q^{-Nw(K) - s} \},
\]
where $L' \sqcup \widetilde{K}$ is $L'$ plus a copy of $\widetilde{K}$ that is unlinked to $L'$.

Next, using \eqref{eq-l-N-+} and \eqref{eq-l-N--} in Lemma \ref{lemma-l-N-crossings}, we can change $\widetilde{K}$ (which is now not linked to $L'$) into an unlink $U$ with Seifert framing (which is not linked to $L'$) and get 
\[
\hat{C}_N ( U ) \simeq \hat{C}_N (\widetilde{K}) \| -Nw(K) \| \{ q^{Nw(K)}\}.
\]
Putting the above together, we get
\begin{equation}\label{eq-oc-reverse-homology-link-proof-1}
\hat{C}_N(L' \sqcup U) \simeq \hat{C}_N(L) ~\| s \|~ \{ q^{-s}\}.
\end{equation}
Finally, by Part (1) of Theorem \ref{thm-MOY-calculus}, we have
\begin{equation}\label{eq-oc-reverse-homology-link-proof-2}
\hat{C}_N(L') \simeq \hat{C}_N(L' \sqcup U).
\end{equation}
Homotopy equivalence \eqref{eq-oc-reverse-homology-link} follows from \eqref{eq-oc-reverse-homology-link-proof-1} and \eqref{eq-oc-reverse-homology-link-proof-2}.
\end{proof}

For a knotted MOY graph, we also have a notion of simple circuits. For a knotted MOY graph $D$, a subgraph $\Delta$ of $D$ is called a simple circuit if
\begin{enumerate}[(i)]
	\item $\Delta$ is a diagram of a knot;
	\item the orientations of all edges of $\Delta$ coincide with the same orientation of this knot. 
\end{enumerate}

Combining the tricks used in the proofs of Theorems \ref{thm-oc-reverse-homology} and \ref{thm-oc-reverse-homology-link}, one can prove the following corollary. We leave the proof to the reader.

\begin{corollary}\label{cor-oc-reverse-homology-general}
Let $D$ be a knotted MOY graph and $\Delta$ a simple circuit of $D$. Denote by $D'$ the knotted MOY graph obtained from $D$ by reversing the orientation and the color of edges along $D$. Then, up to an overall shift of the $\zed_2$-, quantum and homological gradings, $\hat{C}_N(D)$ is homotopic to $\hat{C}_N(D')$. 

In particular, $\left\langle D \right\rangle_N$ and $\left\langle D' \right\rangle_N$ differ from each other only by a factor of the form $\pm q^k$.
\end{corollary}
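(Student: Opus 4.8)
The plan is to merge the local-to-global strategy of the proof of Theorem~\ref{thm-oc-reverse-homology} with the crossing-handling technique from the proof of Theorem~\ref{thm-oc-reverse-homology-link}. As in \emph{Step One} there, I would cut $D$ open at interior points of the edges of $\Delta$ and work one vertex at a time: for each vertex $v$ of $D$ lying on $\Delta$, the same chain of homogeneous homotopy equivalences (assembled from Theorems~\ref{thm-MOY-calculus}, \ref{thm-MOY-knotted-invariance} and Lemmas~\ref{lemma-l-N-crossings}, \ref{lemma-twisted-forks}) turns $v$ into the corresponding vertex $v'$ of $D'$ at the cost of introducing a short $N$-colored arc running along $\Delta$. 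Applying \emph{Step Two} then splices these arcs into a single strand colored by $N$ parallel to $\Delta$, lying everywhere above the rest of the diagram. The one genuinely new feature compared with Theorem~\ref{thm-oc-reverse-homology} is that $\Delta$ may now pass through crossings of $D$, so the $N$-colored parallel strand has to be pushed through each such crossing as well; this is precisely governed by Lemma~\ref{lemma-l-N-crossings}, whose relations \eqref{eq-l-N-+} and \eqref{eq-l-N--} slide the $N$-colored branch over or under a crossing with an explicit homological shift and compensating quantum shift depending on the color of the other branch.

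At the end of Steps One and Two we have a knotted MOY graph that is homotopy equivalent (up to grading shifts) to $D'$ together with an auxiliary circle $\widetilde{\Delta}$ colored by $N$, namely a slight push-off of $\Delta$ in the framing direction, lying everywhere above $D'$. \emph{Step Three} then repeats the final part of the proof of Theorem~\ref{thm-oc-reverse-homology-link}: using regular Reidemeister moves, fork sliding (Part~(1) of Theorem~\ref{thm-MOY-knotted-invariance}) and crossing switches via \eqref{eq-l-N--to+} and \eqref{eq-l-N-+to-}, unlink $\widetilde{\Delta}$ from all other edge-strands of $D'$ and from itself, incurring homological and quantum shifts controlled by the writhe of $\Delta$ and its linking numbers with the other strands; normalize the framing of $\widetilde{\Delta}$ with \eqref{eq-l-N-+} and \eqref{eq-l-N--}; and finally delete the resulting $N$-colored unknot, which by Part~(1) of Theorem~\ref{thm-MOY-calculus} is free since $\qb{N}{N}=1$. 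Composing Steps One--Three gives the homotopy equivalence of $\hat{C}_N(D)$ and $\hat{C}_N(D')$ up to an overall shift of the $\zed_2$-, quantum, and homological gradings. The ``in particular'' part then follows by taking graded Euler characteristics and invoking Part~(4) of Theorem~\ref{thm-MOY-knotted-invariance}, which turns the quantum shift into a factor $q^k$ and the homological (and $\zed_2$) shift into the sign $\pm$.

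I expect the main obstacle to be bookkeeping rather than conceptual. First, as in the footnote to Theorem~\ref{thm-oc-reverse-homology}, one must run Steps One and Two through the four local configurations of $\Delta$ at a vertex (splitting versus merging, times the choice of which two incident edges belong to $\Delta$) and the two sidedness cases at an edge, each obtained from the displayed case by a reversal of orientation or a horizontal flip; these are routine but numerous. Second, and more delicately, one has to keep an honest ledger of the homological and quantum shifts accumulated while dragging the $N$-colored strand past every crossing of $D$ that $\Delta$ meets, including the self-crossings of $\Delta$, in order to confirm that the net effect is of the clean form predicted by the link case --- essentially the shift $s$ of Theorem~\ref{thm-oc-reverse-homology-link} with $\mathsf{c}(K)$ replaced by the colors encountered along $\Delta$. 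Since Theorem~\ref{thm-oc-reverse} already yields the equality of $\left\langle D \right\rangle_N$ and $\left\langle D' \right\rangle_N$ up to $\pm q^k$ when $D$ has no crossings, the whole burden of the argument is to check that these shifts behave correctly in the presence of crossings.
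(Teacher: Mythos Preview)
Your proposal is correct and follows precisely the approach the paper indicates: the paper does not give a detailed proof of this corollary but simply states that it follows by ``combining the tricks used in the proofs of Theorems~\ref{thm-oc-reverse-homology} and~\ref{thm-oc-reverse-homology-link},'' which is exactly the merger you describe. Your write-up in fact supplies more detail than the paper itself, correctly identifying that the only new ingredient beyond Theorem~\ref{thm-oc-reverse-homology} is pushing the auxiliary $N$-colored strand through the crossings of $D$ via Lemma~\ref{lemma-l-N-crossings}, and then unlinking and removing it as in Theorem~\ref{thm-oc-reverse-homology-link}.
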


\section{An Explicit $\mathfrak{so}(6)$ Kauffman Homology}

Theorem \ref{thm-oc-reverse-homology-link} implies that the $N$-colored $\mathfrak{sl}(2N)$ link homology is essentially an invariant of unoriented links. If $N=1$, this homology is the well known Khovanov homology \cite{K1}. In this section, we shed some light on the $2$-colored $\mathfrak{sl}(4)$ link homology. Specifically, we use results from the first two sections to verify that, up to normalization, the $2$-colored $\mathfrak{sl}(4)$ Reshetikhin-Turaev link polynomial is equal to the $\mathfrak{so}(6)$ Kauffman polynomial and, therefore, up to normalization, the $2$-colored $\mathfrak{sl}(4)$ link homology categorifies the $\mathfrak{so}(6)$ Kauffman polynomial. We do so by comparing the Jaeger Formula for the $\mathfrak{so}(6)$ KV polynomial (equation \eqref{eq-Jaeger-formula-N-graph} with $N=3$) to the composition product of the MOY polynomial associated to the embedding $\mathfrak{sl}(1)\times\mathfrak{sl}(3)\hookrightarrow \mathfrak{sl}(4)$. 

\begin{remark}\label{remark-approaches}
There is an alternative approach to the above result. Basically, one can apply Corollary \ref{cor-oc-reverse-homology-general} to the $\mathfrak{sl}(4)$ MOY polynomial of MOY resolutions of $2$-colored link diagrams and keep track of the shifting of the quantum grading while doing so. This would allow one to show that the $2$-colored $\mathfrak{sl}(4)$ Reshetikhin-Turaev link polynomial satisfies the skein relation \eqref{Kauffman-skein} for the $\mathfrak{so}(6)$ Kauffman polynomial. We leave it to the reader to figure out the details of this approach.

Since the coincidence of the $\mathfrak{so}(6)$ Jaeger Formula and the $\mathfrak{sl}(1)\times\mathfrak{sl}(3)\hookrightarrow \mathfrak{sl}(4)$ composition product is itself interesting, we choose to use this coincidence in our proof. 
\end{remark}

\subsection{Renormalizing the $N$-colored $\mathfrak{sl}(2N)$ link homology} We start by renormalizing the $N$-colored $\mathfrak{sl}(2N)$ link homology to make it independent of the orientation.

\begin{definition}\label{def-2N-homology-renormalized}
Let $L$ be an oriented framed link that is colored entirely by $N$. Assume the writhe of $L$ is $w(L)$. We define
\begin{eqnarray}
\label{eq-renormal-homology} \widetilde{C}_{2N}(L) & = & \hat{C}_{2N}(L) \|-\frac{N}{2}w(L)\| \{q^{\frac{N}{2}w(L)}\}, \\
\label{eq-renormal-polynomial} \widetilde{R}_{2N}(L) & = & (-q)^{\frac{N}{2}w(L)} \left\langle L \right\rangle_{2N}.
\end{eqnarray}
Denote by $\widetilde{H}_{2N}(L)$ the homology of $\widetilde{C}_{2N}(L)$.

Note that, if $N$ is odd, then $\frac{N}{2}w(L) \in \frac{1}{2}\zed$. In this case, $\widetilde{C}_{2N}(L)$ and $\widetilde{H}_{2N}(L)$ are $(\frac{1}{2}\zed)\oplus(\frac{1}{2}\zed)$-graded.
\end{definition}

\begin{lemma}\label{lemma-independence-orientation}
The homotopy type of $\widetilde{C}_{2N}(L)$, with its quantum and homological gradings, is independent of the orientation of $L$.
Consequently, $\widetilde{R}_{2N}(L)$ does not depend on the orientation of $L$.
\end{lemma}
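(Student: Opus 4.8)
The plan is to deduce everything from Theorem~\ref{thm-oc-reverse-homology-link}, specialized to the case at hand. The key point is that $L$ is colored entirely by $N$, and in the $\mathfrak{sl}(2N)$ theory the reversal of color sends $N$ to $2N-N=N$; hence reversing the orientation of a component $K$ of $L$ yields the very same link with the orientation of $K$ flipped and all colors still equal to $N$. Since any two orientations of $L$ differ by reversing the orientations of some subset of its components, and such reversals can be performed one component at a time (at each stage the link is still colored entirely by $N$), it suffices to show that flipping the orientation of a single component $K$ leaves the homotopy type of $\widetilde{C}_{2N}(L)$, with its quantum and homological gradings, unchanged.

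So fix a component $K$, let $L'$ be $L$ with the orientation of $K$ reversed, and put $\ell := \sum_{K'\in\mathcal{K}\setminus\{K\}} l(K,K')$. First I would record two numerical facts. Specializing the grading shift $s$ of Theorem~\ref{thm-oc-reverse-homology-link} (with $N$ replaced by $2N$ and every color equal to $N$) gives $s=(2N-2N)w(K)-2N\ell=-2N\ell$, so that $\hat{C}_{2N}(L')\simeq \hat{C}_{2N}(L)\,\| -2N\ell \|\,\{q^{2N\ell}\}$. On the other hand, reversing the orientation of $K$ flips the sign of precisely the mixed crossings between $K$ and the other components while leaving all self-crossings untouched; using the standard decomposition $w(L)=\sum_{K''}w(K'')+2\sum_{\{K'',K'''\}}l(K'',K''')$ of the writhe into self- and mixed-crossing contributions, this yields $w(L')=w(L)-4\ell$.

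Now I would plug these into Definition~\ref{def-2N-homology-renormalized}. The homological shift of $\widetilde{C}_{2N}(L')$ relative to $\hat{C}_{2N}(L)$ is $-2N\ell-\tfrac{N}{2}\bigl(w(L)-4\ell\bigr)=-\tfrac{N}{2}w(L)$, and the quantum shift is $q^{2N\ell}\cdot q^{\frac{N}{2}(w(L)-4\ell)}=q^{\frac{N}{2}w(L)}$; both coincide with the shifts defining $\widetilde{C}_{2N}(L)$, so $\widetilde{C}_{2N}(L')\simeq\widetilde{C}_{2N}(L)$ (up to the $\zed_2$-grading, which we have agreed to ignore). Iterating over the components whose orientation must be changed proves the first assertion. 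For the consequence, the same cancellation applies with \eqref{eq-oc-reverse-poly-link} in place of \eqref{eq-oc-reverse-homology-link}: since $2Nw(K)$ is even one has $\langle L'\rangle_{2N}=q^{2N\ell}\langle L\rangle_{2N}$, and $(-q)^{\frac{N}{2}(w(L)-4\ell)}q^{2N\ell}=(-q)^{\frac{N}{2}w(L)}$ because $(-q)^{-2N\ell}=q^{-2N\ell}$; hence $\widetilde{R}_{2N}(L')=\widetilde{R}_{2N}(L)$. Alternatively, this last statement follows immediately from the first by passing to graded Euler characteristics and invoking Part~(4) of Theorem~\ref{thm-MOY-knotted-invariance}.

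I do not expect a genuine obstacle here. The only delicate point is the bookkeeping of the half-integer gradings that appear when $N$ and $w(L)$ are both odd; but the move changes $w(L)$ by a multiple of $4$, so its parity --- and hence the half-integer part of all the shifts --- is the same for $L$ and $L'$ and cancels out, which is exactly what the argument requires.
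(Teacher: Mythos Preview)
Your proposal is correct and is precisely the detailed verification behind the paper's one-line proof, which simply says the lemma follows easily from Theorem~\ref{thm-oc-reverse-homology-link}. You have spelled out exactly the intended computation: specializing that theorem to the all-$N$-colored case in the $\mathfrak{sl}(2N)$ theory and checking that the grading shift $s=-2N\ell$ is cancelled by the change $w(L')=w(L)-4\ell$ in the renormalization of Definition~\ref{def-2N-homology-renormalized}.
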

\begin{proof}
This follows easily from Theorem \ref{thm-oc-reverse-homology-link}.
\end{proof}

\subsection{The composition product} Now we review the composition product established in \cite{Wu-color-MFW}.

\begin{definition}\label{def-MOY-label}
Let $\Gamma$ be a MOY graph. Denote by $\mathsf{c}$ its color function. That is, for every edge $e$ of $\Gamma$, the color of $e$ is $\mathsf{c}(e)$. A labeling $\mathsf{f}$ of $\Gamma$ is a MOY coloring of the underlying oriented trivalent graph of $\Gamma$ such that $\mathsf{f}(e)\leq \mathsf{c}(e)$ for every edge $e$ of $\Gamma$.

Denote by $\mathcal{L}(\Gamma)$ the set of all labellings of $\Gamma$. For every $\mathsf{f} \in \mathcal{L}(\Gamma)$, denote by $\Gamma_{\mathsf{f}}$ the MOY graph obtained by re-coloring the underlying oriented trivalent graph of $\Gamma$ using $\mathsf{f}$.

For every $\mathsf{f} \in \mathcal{L}(\Gamma)$, define a function $\bar{\mathsf{f}}$ on $E(\Gamma)$ by $\bar{\mathsf{f}}(e)= \mathsf{c}(e)- \mathsf{f}(e)$ for every edge $e$ of $\Gamma$. It is easy to see that $\bar{\mathsf{f}}\in \mathcal{L}(\Gamma)$.

Let $v$ be a vertex of $\Gamma$ of either type in Figure \ref{fig-MOY-vertex}. (Note that, in either case, $e_1$ is to the left of $e_2$ when one looks in the direction of $e$.) For every $\mathsf{f} \in \mathcal{L}(\Gamma)$, define
\[
[v|\Gamma|\mathsf{f}] = \frac{1}{2} (\mathsf{f}(e_1)\bar{\mathsf{f}}(e_2) - \bar{\mathsf{f}}(e_1)\mathsf{f}(e_2)).
\]
\end{definition}

The following is the composition product established in \cite{Wu-color-MFW}.

\begin{theorem}\cite[Theorem 1.7]{Wu-color-MFW}\label{THM-composition-product}
Let $\Gamma$ be a MOY graph. For positive integers $M$, $N$ and $\mathsf{f} \in \mathcal{L}(\Gamma)$, define
\[
\sigma_{M,N}(\Gamma,\mathsf{f}) = M \cdot \mathrm{rot}(\Gamma_{\bar{\mathsf{f}}}) - N \cdot \mathrm{rot}(\Gamma_{\mathsf{f}}) +  \sum_{v\in V(\Gamma)} [v|\Gamma|\mathsf{f}],
\] 
where the rotation number $\rot$ is defined in \eqref{eq-rot-gamma}. Then
\begin{equation}\label{eq-composition-product}
\left\langle \Gamma \right\rangle_{M+N} = \sum_{\mathsf{f} \in \mathcal{L}(\Gamma)} q^{\sigma_{M,N}(\Gamma,\mathsf{f})} \cdot \left\langle \Gamma_{\mathsf{f}} \right\rangle_M \cdot \left\langle \Gamma_{\bar{\mathsf{f}}} \right\rangle_N.
\end{equation}
\end{theorem}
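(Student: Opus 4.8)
The plan is to prove \eqref{eq-composition-product} by a direct manipulation of the state sum \eqref{MOY-bracket-def}, realizing the right-hand side as the effect of splitting the ``alphabet'' $\mathcal{N}=\{2k-(M+N)+1\mid k=0,\dots,M+N-1\}$ of the $\mathfrak{sl}(M+N)$ theory into two order-consecutive blocks: the $M$ smallest elements, identified with the $\mathfrak{sl}(M)$ alphabet $\mathcal{N}_M$ via the order-preserving shift $a\mapsto a+N$, and the $N$ largest, identified with the $\mathfrak{sl}(N)$ alphabet $\mathcal{N}_N$ via $a\mapsto a-M$. Call these $\mathcal{N}^{\mathrm{bot}}$ and $\mathcal{N}^{\mathrm{top}}$; the numerical fact driving everything is that every element of $\mathcal{N}^{\mathrm{bot}}$ is strictly smaller than every element of $\mathcal{N}^{\mathrm{top}}$.

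\textbf{Step 1 (the bijection of states).} Given $\varphi\in\mathcal{S}_{M+N}(\Gamma)$, set $\varphi^{\mathrm{bot}}(e)=\varphi(e)\cap\mathcal{N}^{\mathrm{bot}}$, $\varphi^{\mathrm{top}}(e)=\varphi(e)\cap\mathcal{N}^{\mathrm{top}}$, and $\mathsf{f}(e)=\#\varphi^{\mathrm{bot}}(e)$. The vertex condition $\varphi(e)=\varphi(e_1)\cup\varphi(e_2)$ with $\varphi(e_1)\cap\varphi(e_2)=\emptyset$ splits into the same condition for each block, so $\mathsf{f}(e)=\mathsf{f}(e_1)+\mathsf{f}(e_2)$; hence $\mathsf{f}\in\mathcal{L}(\Gamma)$, and after the two shifts $\varphi^{\mathrm{bot}}$ is a state of $\Gamma_{\mathsf{f}}$ in $\mathcal{S}_M$ and $\varphi^{\mathrm{top}}$ a state of $\Gamma_{\bar{\mathsf{f}}}$ in $\mathcal{S}_N$. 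Conversely, a triple $(\mathsf{f},\psi,\chi)$ with $\mathsf{f}\in\mathcal{L}(\Gamma)$, $\psi\in\mathcal{S}_M(\Gamma_{\mathsf{f}})$ and $\chi\in\mathcal{S}_N(\Gamma_{\bar{\mathsf{f}}})$ reconstructs $\varphi$ edgewise as the union of the image of $\psi(e)$ in $\mathcal{N}^{\mathrm{bot}}$ and of $\chi(e)$ in $\mathcal{N}^{\mathrm{top}}$; these assignments are mutually inverse. The out-of-range cases are consistent: if some $\mathsf{c}(e)>M+N$ then $\left\langle\Gamma\right\rangle_{M+N}=0$, and for every $\mathsf{f}\in\mathcal{L}(\Gamma)$ one has $\mathsf{f}(e)>M$ or $\bar{\mathsf{f}}(e)>N$ on that edge, so each summand on the right vanishes, mirroring the absence of states with an over-full block.

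\textbf{Step 2 (additivity of the exponent).} Fix $\varphi$ and its triple $(\mathsf{f},\varphi^{\mathrm{bot}},\varphi^{\mathrm{top}})$. At a vertex $v$ as in Figure \ref{fig-MOY-vertex}, breaking $\varphi(e_1)\times\varphi(e_2)$ into its four block-pieces and using $\mathcal{N}^{\mathrm{bot}}<\mathcal{N}^{\mathrm{top}}$ gives, from \eqref{eq-def-pi},
\[
\pi(\varphi(e_1),\varphi(e_2))=\pi(\varphi^{\mathrm{bot}}(e_1),\varphi^{\mathrm{bot}}(e_2))+\pi(\varphi^{\mathrm{top}}(e_1),\varphi^{\mathrm{top}}(e_2))+\bar{\mathsf{f}}(e_1)\mathsf{f}(e_2),
\]
since the $\mathcal{N}^{\mathrm{top}}\times\mathcal{N}^{\mathrm{bot}}$ part contributes every pair and the $\mathcal{N}^{\mathrm{bot}}\times\mathcal{N}^{\mathrm{top}}$ part none. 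Expanding $\mathsf{c}(e_i)=\mathsf{f}(e_i)+\bar{\mathsf{f}}(e_i)$ in \eqref{eq-weight-vertex} and recombining, the surviving term is $\frac12(\mathsf{f}(e_1)\bar{\mathsf{f}}(e_2)-\bar{\mathsf{f}}(e_1)\mathsf{f}(e_2))=[v|\Gamma|\mathsf{f}]$, so (writing the first two terms for $v$ regarded in $\Gamma_{\mathsf{f}}$, resp.\ $\Gamma_{\bar{\mathsf{f}}}$)
\[
\mathrm{wt}(v;\varphi)=\mathrm{wt}(v;\varphi^{\mathrm{bot}})+\mathrm{wt}(v;\varphi^{\mathrm{top}})+[v|\Gamma|\mathsf{f}].
\]
For the rotation number, the circle collection $\mathcal{C}_\varphi$ of \eqref{eq-rot-state} is the disjoint union of the circles labelled in $\mathcal{N}^{\mathrm{bot}}$ — which as plane curves are exactly $\mathcal{C}_{\varphi^{\mathrm{bot}}}$ for $\Gamma_{\mathsf{f}}$ — and those labelled in $\mathcal{N}^{\mathrm{top}}$, which are $\mathcal{C}_{\varphi^{\mathrm{top}}}$ for $\Gamma_{\bar{\mathsf{f}}}$; restriction changes no individual rotation number, and by \eqref{eq-rot-gamma} the bottom circles have total rotation $\mathrm{rot}(\Gamma_{\mathsf{f}})$ and the top ones $\mathrm{rot}(\Gamma_{\bar{\mathsf{f}}})$. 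Since an element of $\mathcal{N}^{\mathrm{bot}}$ equals its $\mathcal{N}_M$-label minus $N$ and one of $\mathcal{N}^{\mathrm{top}}$ equals its $\mathcal{N}_N$-label plus $M$, \eqref{eq-rot-state} yields
\[
\mathrm{rot}(\varphi)=\mathrm{rot}(\varphi^{\mathrm{bot}})+\mathrm{rot}(\varphi^{\mathrm{top}})+M\,\mathrm{rot}(\Gamma_{\bar{\mathsf{f}}})-N\,\mathrm{rot}(\Gamma_{\mathsf{f}}).
\]
Summing the vertex weights over $V(\Gamma)$ and adding the last display shows $\sum_{v}\mathrm{wt}(v;\varphi)+\mathrm{rot}(\varphi)$ equals $\sigma_{M,N}(\Gamma,\mathsf{f})$ plus the corresponding quantity for $\varphi^{\mathrm{bot}}$ in $\Gamma_{\mathsf{f}}$ plus that for $\varphi^{\mathrm{top}}$ in $\Gamma_{\bar{\mathsf{f}}}$.

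\textbf{Step 3 (assembling the sum).} Feeding Step 2 into \eqref{MOY-bracket-def} and grouping the sum over $\varphi\in\mathcal{S}_{M+N}(\Gamma)$ by the induced $\mathsf{f}\in\mathcal{L}(\Gamma)$, the factor $q^{\sigma_{M,N}(\Gamma,\mathsf{f})}$ pulls out and the bijection of Step 1 factors the remaining sum over $(\varphi^{\mathrm{bot}},\varphi^{\mathrm{top}})$ as $\left\langle\Gamma_{\mathsf{f}}\right\rangle_M\cdot\left\langle\Gamma_{\bar{\mathsf{f}}}\right\rangle_N$; this is \eqref{eq-composition-product}. The argument is essentially bookkeeping, and the one delicate point is Step 2: pinning down the two order-preserving shifts (hence the signs on $M\,\mathrm{rot}(\Gamma_{\bar{\mathsf{f}}})$ and $N\,\mathrm{rot}(\Gamma_{\mathsf{f}})$) and the single surviving cross term $\bar{\mathsf{f}}(e_1)\mathsf{f}(e_2)$, so that the residue of the weight computation is exactly $[v|\Gamma|\mathsf{f}]$ and the residue of the rotation computation is exactly the rotation part of $\sigma_{M,N}$. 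This is where all of Definition \ref{def-MOY-label} is consumed, and it is the only real obstacle.
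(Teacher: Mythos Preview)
The paper does not actually prove this theorem: it is imported verbatim from \cite{Wu-color-MFW} and only \emph{used} here (in the proof of Theorem~\ref{thm-mostly-2-colored-MOY-to-4-valent}). So there is no ``paper's own proof'' to compare against.

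That said, your argument is correct and is the natural direct proof from the state-sum definition \eqref{MOY-bracket-def}. The key computations check: the cross term in $\pi$ is exactly $\bar{\mathsf{f}}(e_1)\mathsf{f}(e_2)$ because $\mathcal{N}^{\mathrm{top}}>\mathcal{N}^{\mathrm{bot}}$, and combining this with the expansion of $\tfrac{1}{2}\mathsf{c}(e_1)\mathsf{c}(e_2)$ leaves precisely $[v|\Gamma|\mathsf{f}]$; the rotation computation is right because the order-preserving bijections $\mathcal{N}^{\mathrm{bot}}\to\mathcal{N}_M$ and $\mathcal{N}^{\mathrm{top}}\to\mathcal{N}_N$ are the shifts $a\mapsto a+N$ and $a\mapsto a-M$, and $\sum_{C\ \mathrm{bot}}\mathrm{rot}(C)=\mathrm{rot}(\Gamma_{\mathsf{f}})$, $\sum_{C\ \mathrm{top}}\mathrm{rot}(C)=\mathrm{rot}(\Gamma_{\bar{\mathsf{f}}})$ by \eqref{eq-rot-gamma}. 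One small point worth making explicit is that $\pi$ is invariant under a common additive shift of both arguments, so the vertex weights $\mathrm{wt}(v;\varphi^{\mathrm{bot}})$ and $\mathrm{wt}(v;\varphi^{\mathrm{top}})$ are unchanged when passing from $\mathcal{N}^{\mathrm{bot}},\mathcal{N}^{\mathrm{top}}$ to $\mathcal{N}_M,\mathcal{N}_N$; you use this implicitly. The out-of-range discussion in Step~1 is also correct. This is almost certainly the same argument as in \cite{Wu-color-MFW}, since the theorem is tailor-made for exactly this block decomposition of the alphabet.
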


\begin{remark}
The composition product \eqref{eq-composition-product} can be viewed as induced by the embedding $\mathfrak{su}(M)\times\mathfrak{su}(N)\hookrightarrow \mathfrak{su}(M+N)$. The Jaeger Formula \eqref{eq-Jaeger-formula-N} and \eqref{eq-Jaeger-formula-N-graph} can be viewed as induced by the embedding $\mathfrak{su}(N)\hookrightarrow \mathfrak{so}(2N)$. In \cite{Chen-Reshetikhin}, Chen and Reshetikhin presented an extensive study of formulas of the (uncolored) HOMFLY-PT and Kauffman polynomials induced by these and other embeddings.
\end{remark}

\subsection{The $\mathfrak{so}(6)$ KV polynomial and the $\mathfrak{sl}(4)$ MOY polynomial} In this section, we prove that the $\mathfrak{so}(6)$ KV polynomial of a planar $4$-valent graph is equal to the $\mathfrak{sl}(4)$ MOY polynomial of ``mostly $2$-colored" MOY graphs.

\begin{figure}[ht]
\vspace{-1pc}
\[
\xymatrix{
 \input{wide-edge-2} &&  \input{box} \\
 \text{Type one } && \text{Type two}
}
\]
\caption{}\label{mostly-2-colored-MOY-local-figure}

\end{figure}

\begin{definition}\label{def-mostly-2-colored-MOY}
We call a MOY graph mostly $2$-colored if all of its edges are colored by $2$ except in local configurations of the two types given in Figure \ref{mostly-2-colored-MOY-local-figure}. 

Note that all vertices of a mostly $2$-colored MOY graph are contained in such local configurations.
\end{definition}

\begin{figure}[ht]
\vspace{-1pc}
\[
\xymatrix{
 \input{wide-edge-2} \ar@{~>}[rr] &&  \input{edges-tb} \\
 \input{box} \ar@{~>}[rr] && \input{vertex-unoriented-2} \\
 \Gamma && G(\Gamma)
}
\]
\caption{}\label{mostly-2-colored-MOY-to-4-valent-figure}

\end{figure}

The following is the main theorem of this subsection.

\begin{theorem}\label{thm-mostly-2-colored-MOY-to-4-valent}
Given a mostly $2$-colored MOY graph $\Gamma$, we remove the $4$-colored edge in every type one local configuration, shrink the square in every type two configuration to a vertex, remove color and orientation, and smooth out all the vertices of valence $2$. (See Figure \ref{mostly-2-colored-MOY-to-4-valent-figure}.) This gives an unoriented $4$-valent graph $G(\Gamma)$ embedded in $\mathbb{R}^2$. Then
\begin{equation}\label{eq-mostly-2-colored-MOY-to-4-valent}
\left\langle \Gamma \right\rangle_4 = P_6(G(\Gamma)).
\end{equation}
\end{theorem}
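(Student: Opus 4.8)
The plan is to expand both sides into explicit sums and match them. On the left, apply the composition product of Theorem \ref{THM-composition-product} with $M=1$ and $N=3$:
\[
\langle \Gamma \rangle_4 = \sum_{\mathsf{f} \in \mathcal{L}(\Gamma)} q^{\sigma_{1,3}(\Gamma,\mathsf{f})} \, \langle \Gamma_{\mathsf{f}} \rangle_1 \, \langle \Gamma_{\bar{\mathsf{f}}} \rangle_3 .
\]
A term survives only when $\langle \Gamma_{\mathsf{f}} \rangle_1 \neq 0$ and $\langle \Gamma_{\bar{\mathsf{f}}} \rangle_3 \neq 0$. The first condition forces $\mathsf{f}(e) \in \{0,1\}$ on every edge of $\Gamma$, and then, since $\mathcal{N}=\{0\}$ for $\mathfrak{sl}(1)$, the unique state of $\Gamma_{\mathsf{f}}$ has every vertex weight and rotation number equal to $0$, so $\langle \Gamma_{\mathsf{f}} \rangle_1 = 1$. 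The second condition requires $\bar{\mathsf{f}}(e)=\mathsf{c}(e)-\mathsf{f}(e)\le 3$, which among the edges of a mostly $2$-colored graph is binding only on the $4$-colored edge of each type one configuration, forcing $\mathsf{f}=1$ there. Calling the surviving labelings \emph{admissible}, we get $\langle \Gamma \rangle_4 = \sum_{\mathsf{f}\ \mathrm{admissible}} q^{\sigma_{1,3}(\Gamma,\mathsf{f})} \langle \Gamma_{\bar{\mathsf{f}}} \rangle_3$.

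On the right, apply the graph Jaeger formula \eqref{eq-Jaeger-formula-N-graph} with $N=3$ to $P_6(G(\Gamma))$. Since $G(\Gamma)$ is a planar embedded $4$-valent graph it has no crossings: every balanced edge orientation is admissible, the resolution weight $[G(\Gamma)_\varrho,\varsigma]=q^{\#L(\varsigma)-\#R(\varsigma)}$ is a Laurent monomial with no sign, and $\langle G(\Gamma)_{\varrho,\varsigma}\rangle_3 = R_3(G(\Gamma)_{\varrho,\varsigma})$ by \eqref{eq-MOY-HOMFLY} (there are no crossings to switch). Hence
\[
P_6(G(\Gamma)) = \sum_{\varrho \in \mathcal{O}(G(\Gamma))}\ \sum_{\varsigma \in \Sigma(G(\Gamma)_\varrho)} q^{-2\rot(G(\Gamma)_{\varrho,\varsigma}) + \#L(\varsigma) - \#R(\varsigma)} \, \langle G(\Gamma)_{\varrho,\varsigma}\rangle_3 .
\]

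It then remains to match the two sums. Given an admissible $\mathsf{f}$, the union of edges of $\Gamma$ on which $\mathsf{f}=1$ runs in a prescribed way through the interior of each type one and type two square; projected to $G(\Gamma)$, it tells us at each $4$-valent vertex which pairs of incident edges are connected, which records an edge orientation and (where that orientation is non-crossing-like) a resolution. One then computes $\langle \Gamma_{\bar{\mathsf{f}}}\rangle_3$ by reversing orientation and color along the simple color-$2$ circuits of $\Gamma_{\bar{\mathsf{f}}}$ (Theorem \ref{thm-oc-reverse}, which is cost-free) and contracting the leftover color-$3$ and color-$2$ local pieces inside the former type one and type two squares using the digon, square and associativity relations of Theorem \ref{thm-MOY-calculus}; the outcome is a monomial times $\langle G(\Gamma)_{\varrho,\varsigma}\rangle_3$, summed over the pair(s) $(\varrho,\varsigma)$ attached to $\mathsf{f}$. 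Finally one verifies the numerical identity $\sigma_{1,3}(\Gamma,\mathsf{f}) + (\text{monomial exponent}) = -2\rot(G(\Gamma)_{\varrho,\varsigma}) + \#L(\varsigma) - \#R(\varsigma)$, configuration by configuration, treating the rotation-number parts with the Gauss--Bonnet decomposition already used in the proof of Theorem \ref{thm-oc-reverse} and matching the $[v|\Gamma|\mathsf{f}]$ contributions against the $L/R$ weights. Summing over $\mathsf{f}$ then yields \eqref{eq-mostly-2-colored-MOY-to-4-valent}.

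I expect the last step to be the main obstacle: the explicit local reduction of $\Gamma_{\bar{\mathsf{f}}}$ across the type one and type two squares --- in particular the color-$3$ edge is ``full'' for $\mathfrak{sl}(3)$ and behaves quite differently from an ordinary merge-and-split, so the $\mathfrak{sl}(3)$ relations must be applied with care and the resulting terms organized correctly against the Jaeger expansion (the local matching is not naively one-to-one) --- together with the exact bookkeeping of quantum exponents (the $\sigma_{1,3}$ terms, the rotation numbers, the $L/R$ weights, and the shifts picked up in the reduction). This comes down to enumerating the finitely many admissible colorings on each configuration type with their induced orientations and resolutions; it is lengthy but sign-free, since every ingredient (Theorems \ref{thm-oc-reverse} and \ref{thm-MOY-calculus}, and the crossingless Jaeger weights) contributes only powers of $q$, so no sign discrepancy can arise.
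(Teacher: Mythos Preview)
Your proposal is correct and follows essentially the same route as the paper: expand $\langle\Gamma\rangle_4$ via the $M=1$, $N=3$ composition product, expand $P_6(G(\Gamma))$ via the graph Jaeger formula, pass from $\Gamma_{\bar{\mathsf{f}}}$ to an oriented $4$-valent picture by reversing orientation and color along simple circuits (Theorem~\ref{thm-oc-reverse}), and then verify the exponent identity by a local case check. The paper organizes the last step exactly as you anticipate, via an explicit table (Figures~\ref{local-weights-type-1-fig} and~\ref{local-weights-type-2-fig}) and a dedicated rotation-number lemma (Lemma~\ref{lemma-rotation-numbers-oc-reverse}); two small points worth noting are that the circuits $\Delta_{\mathsf{f}}$ along which one reverses are not purely color-$2$ (they pick up specified internal edges of the type one and type two squares), and that the one-to-many case you flag is handled by the single $\mathfrak{sl}(3)$ square relation~\eqref{eq-MOY-3-3} rather than the general MOY calculus.
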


Note that the $\mathfrak{so}(6)$ Jaeger formula expresses $P_6(G(\Gamma))$ as a state sum of $\mathfrak{sl}(3)$ MOY polynomials and the $\mathfrak{sl}(1)\times\mathfrak{sl}(3)\hookrightarrow \mathfrak{sl}(4)$ composition product expresses $\left\langle \Gamma \right\rangle_4$ as a state sum of $\mathfrak{sl}(3)$ MOY polynomials. We prove equation \eqref{eq-mostly-2-colored-MOY-to-4-valent} by showing that these two state sums are essentially the same. Several notions of rotation numbers are involved in these two formulas. We need Lemma \ref{lemma-rotation-numbers-oc-reverse} below to track the rotation numbers of MOY graphs.

\begin{figure}[ht]
\vspace{-1pc}
\[
\xymatrix{
v && v' && d(v \leadsto v') \\
\input{MOY-vertex-2-no-edge-names} && \input{MOY-vertex-4-no-edge-names}   &&  \frac{n}{2} \\
\input{MOY-vertex-2-no-edge-names} && \input{MOY-vertex-6-no-edge-names}   &&  -\frac{m}{2} \\
\input{MOY-vertex-1-no-edge-names} && \input{MOY-vertex-3-no-edge-names}   &&  \frac{n}{2} \\
\input{MOY-vertex-1-no-edge-names} && \input{MOY-vertex-5-no-edge-names}   &&  -\frac{m}{2} 
}
\]
\caption{}\label{rotation-numbers-oc-reverse-index-fig}

\end{figure}

\begin{lemma}\label{lemma-rotation-numbers-oc-reverse}
Let $\Gamma$ be a MOY graph and $\Delta$ a simple circuit of $\Gamma$. Denote by $\Gamma'$ the MOY graph obtained from $\Gamma$ by reversing the orientation and the color of edges (with respect to $N$) along $\Delta$. Recall that the rotation number of a MOY graph is defined in equation \eqref{eq-rot-gamma}. We view $\Delta$ as an uncolored oriented circle embedded $\mathbb{R}^2$ and define $\rot\Delta$ to be the usual rotation number of this circle. Then 
\begin{equation}\label{eq-rotation-numbers-oc-reverse}
\rot \Gamma' = \rot \Gamma -N \rot \Delta + \sum_v d(v \leadsto v'),
\end{equation}
where $v$ runs through all vertices of $\Delta$, and $d(v\leadsto v')$ is defined in Figure \ref{rotation-numbers-oc-reverse-index-fig}.
\end{lemma}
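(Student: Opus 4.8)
The plan is to reduce the computation of $\rot\Gamma'$ to a purely local, vertex-by-vertex bookkeeping, using the localization of the rotation number already developed in the proof of Theorem~\ref{thm-oc-reverse}. Recall from equations~\eqref{rot-def-local-v}, \eqref{rot-def-local-v-prime} and \eqref{eq-rot-sum} that $\rot$ of any state $\varphi$ decomposes as a sum over vertices of local contributions $\rot(v;\varphi)$; but here we want the rotation number of the MOY graph itself, $\rot\Gamma=\sum_{C\in\mathcal{C}_\varphi}\rot(C)$, which by \eqref{eq-rot-gamma} is independent of $\varphi$. First I would observe that $\rot\Gamma$ admits an analogous local decomposition $\rot\Gamma=\sum_{v}r(v)$, where $r(v)$ is obtained from the formula for $\rot(v;\varphi)$ by replacing each $\sum\varphi(e)$ by $\mathsf{c}(e)$ (equivalently, by running the Gauss--Bonnet argument on the collection of circles $\mathcal{C}_\varphi$ with each circle counted once rather than weighted by its element of $\mathcal{N}$). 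Concretely, for a splitting vertex $\hat v$ with incoming edge $e$ of color $m+n$ and outgoing edges $e_1$ (color $m$, left) and $e_2$ (color $n$, right),
\[
r(\hat v) = \frac{1}{2\pi}\int_e \kappa\,ds\cdot(m+n) + \frac{1}{2\pi}\Bigl(\hat\alpha+\int_{e_1}\kappa\,ds\Bigr)m + \frac{1}{2\pi}\Bigl(\hat\beta+\int_{e_2}\kappa\,ds\Bigr)n,
\]
with the analogous formula (using $\alpha,\beta$) for a merging vertex $v$; summing over all vertices and invoking Gauss--Bonnet gives $\sum_v r(v)=\rot\Gamma$.

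Next I would compare $r(v)$ with $r(v')$ for each vertex $v$ of $\Delta$. If $v\notin\Delta$ then $v=v'$ and $r(v)=r(v')$, so these contribute nothing to $\rot\Gamma'-\rot\Gamma$. If $v\in\Delta$, then exactly two of the three half-edges at $v$ change orientation and have their colors replaced by $N-(\,\cdot\,)$, and we are in one of the four local configurations of Figure~\ref{rotation-numbers-oc-reverse-index-fig}. The computation of $r(v')-r(v)$ in each case is exactly the same curvature-and-angle manipulation already carried out in the proof of Theorem~\ref{thm-oc-reverse} for $\rot(v';\varphi')-\rot(v;\varphi)$ (equations leading to \eqref{rotation-local-change}), \emph{except} that: (a) each $\sum\varphi(e)$ becomes $\mathsf{c}(e)$; and (b) when a color $m$ is replaced by $N-m$ the contribution changes by an extra $\frac{1}{2\pi}(\text{angle}+\int\kappa\,ds)\cdot(N-2m)$ term, not merely by a sign on $\sum\varphi(e)$ as in the state-sum case where $\sum(\mathcal{N}\setminus A)=-\sum A$. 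Carrying out this manipulation in the representative case of Figure~\ref{fig-MOY-vertex-change} and using the angle identity $\alpha-\beta+\gamma=\pi$ (already noted in that proof) gives $r(v')-r(v) = \tfrac{n}{2} - \tfrac{1}{2\pi}\bigl(\text{total turning of }\Delta\text{ at }v\bigr)\cdot N$ plus the genuinely local piece; summing the local pieces over all vertices of $\Delta$ yields $\sum_v d(v\leadsto v')$ and summing the total-turning pieces yields, by Gauss--Bonnet applied to the embedded circle $\Delta$, exactly $-N\rot\Delta$. I would then verify that the remaining three configurations in Figure~\ref{rotation-numbers-oc-reverse-index-fig} produce the indices $-\tfrac{m}{2}$, $\tfrac{n}{2}$, $-\tfrac{m}{2}$ recorded there, each obtained from the first by a reversal of orientation or a horizontal flip (or both), which swaps the roles of $m$ and $n$ and/or changes a sign, precisely as in the footnote to the proof of Theorem~\ref{thm-oc-reverse-homology}.

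Assembling the pieces, $\rot\Gamma' - \rot\Gamma = \sum_{v\in\Delta}\bigl(r(v')-r(v)\bigr) = -N\rot\Delta + \sum_{v\in\Delta} d(v\leadsto v')$, which is \eqref{eq-rotation-numbers-oc-reverse}. The main obstacle I anticipate is purely organizational rather than conceptual: correctly splitting the difference $r(v')-r(v)$ into the part that is ``local'' (depending only on the combinatorics at $v$, which becomes $d(v\leadsto v')$) and the part that is a total-turning term along $\Delta$ (which telescopes via Gauss--Bonnet into $-N\rot\Delta$), while keeping the orientation conventions on the directed angles $\alpha,\beta,\gamma$ and on $\kappa$ consistent with those fixed in the proof of Theorem~\ref{thm-oc-reverse}. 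Once the representative case is done carefully, the other three cases are symmetry reductions and I would simply indicate the symmetry rather than recompute. One should also double-check that the $4$-colored and $0$-colored degenerate edges (colors $0$ or $N$) contribute consistently, but since $\sum\mathcal{N}=0$ forces $\mathsf{c}=0$ and $\mathsf{c}=N$ edges to carry rotation-number weight $0$ and $N\rot$ of the circle respectively, these fit the same formula with no special treatment.
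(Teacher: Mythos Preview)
Your proposal is correct and follows essentially the same approach as the paper: the paper likewise localizes $\rot\Gamma$ by replacing $\sum\varphi(e)$ with $\mathsf{c}(e)$ in the formulas from the proof of Theorem~\ref{thm-oc-reverse}, introduces an explicit local turning contribution $\rot_\Delta(v)$ along $\Delta$, and proves the pointwise identity $\rot(v')=\rot(v)-N\rot_\Delta(v)+d(v\leadsto v')$ in the representative case of Figure~\ref{fig-MOY-vertex-change} using $\alpha-\beta+\gamma=\pi$. Your only slight imprecision is the phrase ``$\tfrac{n}{2}\ldots$ plus the genuinely local piece,'' since $\tfrac{n}{2}$ \emph{is} the local piece $d(v\leadsto v')$ in that case; otherwise your outline matches the paper's proof.
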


\begin{proof}
We prove Lemma \ref{lemma-rotation-numbers-oc-reverse} using a localization of the rotation number similar to that used in the proof of Theorem \ref{thm-oc-reverse}.

Cut each edge of $\Gamma$ at one point in its interior. This divides $\Gamma$ into a collection of neighborhoods of its vertices, each of which is a vertex with three adjacent half-edges. (See Figure \ref{fig-MOY-vertex-angles}, where $e$, $e_1$ and $e_2$ are the three half-edges.)

For a vertex of $\Gamma$, if it is of the form $v$ in Figure \ref{fig-MOY-vertex-angles}, we denote by $\alpha$ the directed angle from $e_1$ to $e$ and by $\beta$ the directed angle from $e_2$ to $e$. We define
\begin{equation}\label{gamma-rot-def-local-v}  
\rot(v) = \frac{m+n}{2\pi} \int_{e}\kappa ds  +\frac{m}{2\pi}\left(\alpha + \int_{e_1}\kappa ds\right)   + \frac{n}{2\pi}\left(\beta + \int_{e_2}\kappa ds\right),
\end{equation}
where $\kappa$ is the signed curvature of a plane curve.

If the vertex is of the form $\hat{v}$ in Figure \ref{fig-MOY-vertex-angles}, we denote by $\hat{\alpha}$ the directed angle from $e$ to $e_1$ and by $\hat{\beta}$ the directed angle from $e$ to $e_2$. We define
\begin{equation}\label{gamma-rot-def-local-v-prime}  
\rot(\hat{v}) = \frac{m+n}{2\pi}\int_{e}\kappa ds + \frac{m}{2\pi} \left(\hat{\alpha}+\int_{e_1}\kappa ds\right) + \frac{n}{2\pi}\left(\hat{\beta}+\int_{e_2}\kappa\right) ds . 
\end{equation}

By the Gauss-Bonnet Theorem, one can easily see that 
\begin{equation} \label{eq-gamma-rot-sum}
\rot(\Gamma) = \sum_{v \in V(\Gamma)} \rot(v).
\end{equation}

\begin{figure}[ht]
\vspace{-1pc}
\[
\xymatrix{
\input{Delta-vertex}
}
\]
\caption{}\label{delta-rot-local-fig}

\end{figure}

For a vertex $v$ of $\Delta$, denote by $e_1$ and $e_2$ the two half-edges incident at $v$ belonging to $\Delta$. Assume that $e_1$ points into $v$, $e_2$ points out of $v$, and the directed angle from $e_1$ to $e_2$ is $\theta$. (See Figure \ref{delta-rot-local-fig}.) Define
\begin{equation}\label{eq-delta-rot-local}
\rot_\Delta (v) = \frac{1}{2\pi}\left(\int_{e_1}\kappa ds +\theta + \int_{e_2}\kappa ds\right).
\end{equation}
By the Gauss-Bonnet Theorem, we know $\rot \Delta = \sum_v \rot_\Delta (v)$, where $v$ runs through all vertices of $\Delta$.

For a vertex $v$ of $\Gamma$ contained in $\Delta$, denote by $v'$ the vertex of $\Gamma'$ corresponding to $v$. We claim
\begin{equation}\label{eq-local-rot-change}
\rot(v') = \rot(v)- N \rot_\Delta (v)  + d(v \leadsto v').
\end{equation}
Clearly, the lemma follows from \eqref{eq-local-rot-change}.

To prove \eqref{eq-local-rot-change}, one needs to check that it is true for all four cases listed in Figure \ref{rotation-numbers-oc-reverse-index-fig}. Since the proofs in all four cases are very similar, we only check the first case here and leave the other three to the reader.

In the first case, $v$ and $v'$ are depicted in Figure \ref{fig-MOY-vertex-change}. As before, denote by $\alpha$ the directed angle from $e_1$ to $e$, by $\beta$ the directed angle from $e_2$ to $e$ and by $\gamma$ the directed angle from $e_2'$ to $e_1'$. Then
\begin{eqnarray*}
\rot(v') & =  & \frac{N-m}{2 \pi} \int_{e_1'}\kappa ds + \frac{N-m-n}{2\pi} \left(-\alpha + \int_{e'}\kappa ds\right) + \frac{n}{2\pi} \left(\gamma + \int_{e_2'}\kappa ds \right) \\
& = & \frac{m}{2\pi} \left(\alpha + \int_{e_1}\kappa ds \right) + \frac{n}{2\pi} \left(\beta + \int_{e_2}\kappa ds \right) + \frac{m+n}{2\pi} \left(\int_{e}\kappa ds\right) \\
&& - \frac{N}{2\pi} \left( \int_{e_1}\kappa ds + \alpha + \int_{e}\kappa ds\right) + \frac{\alpha + \gamma -\beta}{2\pi} n \\
& = & \rot(v)- N \rot_\Delta (v)  + \frac{n}{2},
\end{eqnarray*}
where, in the last step, we used the fact that $\alpha + \gamma -\beta = \pi$. This proves \eqref{eq-local-rot-change} in the first case in Figure \ref{rotation-numbers-oc-reverse-index-fig}.
\end{proof}

Now we are ready to prove Theorem \ref{thm-mostly-2-colored-MOY-to-4-valent}.

\begin{proof}[Proof of Theorem \ref{thm-mostly-2-colored-MOY-to-4-valent}]
By the composition product \eqref{eq-composition-product}, we know that
\[
\left\langle \Gamma \right\rangle_4 = \sum_{\mathsf{f} \in \mathcal{L}(\Gamma)} q^{\sigma_{1,3}(\Gamma,\mathsf{f})} \cdot \left\langle \Gamma_{\mathsf{f}} \right\rangle_1 \cdot \left\langle \Gamma_{\bar{\mathsf{f}}} \right\rangle_3,
\]
where
\begin{equation}\label{eq-composition-product-1+3-power-label}
\sigma_{1,3}(\Gamma,\mathsf{f}) = \mathrm{rot}(\Gamma_{\bar{\mathsf{f}}}) - 3 \cdot \mathrm{rot}(\Gamma_{\mathsf{f}}) +  \sum_{v\in V(\Gamma)} [v|\Gamma|\mathsf{f}].
\end{equation}
Note that, in order for the product $\left\langle \Gamma_{\mathsf{f}} \right\rangle_1 \cdot \left\langle \Gamma_{\bar{\mathsf{f}}} \right\rangle_3$ to be non-zero, we must have $\mathsf{f}(e)=0,1$ and $0\leq\bar{\mathsf{f}}(e) \leq 3$ for all edges of $\Gamma$. Define 
\[
\mathcal{L}_{\neq0}(\Gamma) = \{\mathsf{f} \in  \mathcal{L}(\Gamma)~|~ \mathsf{f}(e)=0,1,~ 0\leq\bar{\mathsf{f}}(e) \leq 3 ~\forall e \in E(\Gamma)\}.
\]
Then
\begin{equation}\label{eq-composition-product-1+3}
\left\langle \Gamma \right\rangle_4 = \sum_{\mathsf{f} \in \mathcal{L}_{\neq0}(\Gamma)} q^{\sigma_{1,3}(\Gamma,\mathsf{f})}  \cdot \left\langle \Gamma_{\bar{\mathsf{f}}} \right\rangle_3,
\end{equation}
where we used the fact that $\left\langle \Gamma_{\mathsf{f}} \right\rangle_1=1$ for all $\mathsf{f} \in  \mathcal{L}_{\neq0}(\Gamma)$.

Denote by $E_2(\Gamma)$ the set of edges of $\Gamma$ colored by $2$ and by $E(G(\Gamma))$ the set of edges of $G(\Gamma)$. Then there is a surjective function $g:E_2(\Gamma) \rightarrow E(G(\Gamma))$ such that $g(e)$ is the edge of $G(\Gamma)$ ``containing" $e$ for every $e\in E_2(\Gamma)$.

Note that, for any $\mathsf{f} \in  \mathcal{L}_{\neq0}(\Gamma)$, $\Gamma_{\mathsf{f}}$ is a collection of pairwise disjoint embedded circles colored by $1$ (after erasing edges colored by $0$.) All possible intersections of $\Gamma_{\mathsf{f}}$ with type one and type two local configurations (defined in Figure \ref{mostly-2-colored-MOY-local-figure}) are described in Figures \ref{local-weights-type-1-fig} and \ref{local-weights-type-2-fig} in Appendix \ref{app-figures}, where edges belonging to $\Gamma_{\mathsf{f}}$ are traced out by \textcolor{BrickRed}{red} paths.

For any $\mathsf{f} \in  \mathcal{L}_{\neq0}(\Gamma)$, we define an edge orientation $\varrho_{\mathsf{f}}$ of $G(\Gamma)$ such that, for all $e \in E_2(\Gamma)$,
\[
\varrho_{\mathsf{f}}(g(e)) = \begin{cases}
\text{the orientation of } e & \text{if } \mathsf{f}(e) =1, \\
\text{the opposite of the orientation of } e & \text{if } \mathsf{f}(e) =0.
\end{cases}
\]
It is easy to check that $\varrho_{\mathsf{f}}(g(e))$ is a well define balanced edge orientation of $G(\Gamma)$ and the mapping $\mathsf{f} \mapsto \varrho_{\mathsf{f}}$ is a surjection $\mathcal{L}_{\neq0}(\Gamma) \rightarrow \mathcal{O}(G(\Gamma))$.

Given a labeling $\mathsf{f} \in  \mathcal{L}_{\neq0}(\Gamma)$, we define a subgraph $\Delta_{\mathsf{f}}$ of $\Gamma$ (and therefore of $\Gamma_{\bar{\mathsf{f}}}$) such that
\begin{itemize}
	\item if $e \in E_2(\Gamma)$, then $e$ is in $\Delta_{\mathsf{f}}$ if and only if $\mathsf{f}(e)=0$;
	\item edges of $\Delta_{\mathsf{f}}$ of other colors (which are all contained in local configurations of type one or two) are traced out by \textcolor{SkyBlue}{blue} paths in Figures \ref{local-weights-type-1-fig} and \ref{local-weights-type-2-fig} in Appendix \ref{app-figures}.
\end{itemize}
It is easy to see that $\Delta_{\mathsf{f}}$ is a union of pairwise disjoint simple circuits of $\Gamma$ (and therefore of $\Gamma_{\bar{\mathsf{f}}}$.) Reversing the orientation and the color (with respect to $3$) of edges of $\Gamma_{\bar{\mathsf{f}}}$ along $\Delta_{\mathsf{f}}$, we get a MOY graph $\Gamma_{\bar{\mathsf{f}}}'$. By Theorem \ref{thm-oc-reverse}, we have
\begin{equation}\label{eq-Gamma-bar-Gamma-bar-prime}
\left\langle \Gamma_{\bar{\mathsf{f}}}\right\rangle_3 = \left\langle  \Gamma_{\bar{\mathsf{f}}}'  \right\rangle_3.
\end{equation}
In $\Gamma_{\bar{\mathsf{f}}}'$, delete all edges colored by $0$, contract all edge colored by $2$ and smooth out all vertices of valence $2$. This gives a partial resolution $G(\Gamma)_{\varrho_{\mathsf{f}},\varsigma_{\mathsf{f}}}$ of $G(\Gamma)_{\varrho_{\mathsf{f}}}$, where $\varsigma_{\mathsf{f}}$ resolves all vertices of $G(\Gamma)_{\varrho_{\mathsf{f}}}$ except those corresponding to the last case in Figure \ref{local-weights-type-2-fig}. In this last case, we need to further choose the resolution. Note that, by \cite[Lemma 2.4]{MOY}, we have
\begin{equation}\label{eq-MOY-3-3}
\left\langle \input{box-ll-rllr-2-slides} \right\rangle_3 = \left\langle \setlength{\unitlength}{1.75pt}
\begin{picture}(20,10)(-10,7)
\qbezier(-10,0)(0,10)(-10,20)

\qbezier(10,0)(0,10)(10,20)

\put(-10,20){\vector(-1,1){0}}

\put(10,0){\vector(1,-1){0}}

\put(-5,15){\tiny{$1$}}

\put(3,3){\tiny{$1$}}

\end{picture} \right\rangle_3 + \left\langle \setlength{\unitlength}{1.75pt}
\begin{picture}(20,10)(-10,7)
\qbezier(-10,0)(0,10)(10,0)

\qbezier(-10,20)(0,10)(10,20)

\put(-10,20){\vector(-1,1){0}}

\put(10,0){\vector(1,-1){0}}

\put(-5,12){\tiny{$1$}}

\put(3,6){\tiny{$1$}}

\end{picture} \right\rangle_3. \vspace{1pc}
\end{equation}

For $\mathsf{f} \in  \mathcal{L}_{\neq0}(\Gamma)$ and $\varsigma \in \Sigma(G(\Gamma)_{\varrho_{\mathsf{f}}})$, we say that $\varsigma$ is compatible with $\mathsf{f}$ if, on all the vertices that are resolved by $\varsigma_{\mathsf{f}}$, $\varsigma$ agrees with $\varsigma_{\mathsf{f}}$. Denote by $\Sigma_{\mathsf{f}}(G(\Gamma)_{\varrho_{\mathsf{f}}})$ the set of resolutions of $G(\Gamma)_{\varrho_{\mathsf{f}}}$ that are compatible with $\mathsf{f}$. Then the mapping $(\mathsf{f}, \varsigma) \mapsto (\varrho_{\mathsf{f}},\varsigma)$ gives a bijection 
\begin{equation}\label{bijection-labeling-orientation}
\{(\mathsf{f}, \varsigma)~|~ \mathsf{f} \in  \mathcal{L}_{\neq0}(\Gamma),~ \varsigma \in \Sigma_{\mathsf{f}}(G(\Gamma)_{\varrho_{\mathsf{f}}})\} \rightarrow \{(\varrho,\varsigma) ~|~ \varrho \in \mathcal{O}(G(\Gamma)),~ \varsigma \in \Sigma (G(\Gamma)_\varrho\}.
\end{equation}

Let $C$ be a local configuration (of type one or type two in Figure \ref{mostly-2-colored-MOY-local-figure}) in $\Gamma$. For $\mathsf{f} \in  \mathcal{L}_{\neq0}(\Gamma)$  and $\varsigma \in \Sigma_{\mathsf{f}}(G(\Gamma)_{\varrho_{\mathsf{f}}})$, we define two indices $t_{\mathsf{f},\varsigma}(C)$ and $r_{\mathsf{f},\varsigma}(C)$. The values of these two indices are given in Figures \ref{local-weights-type-1-fig} and \ref{local-weights-type-2-fig} in Appendix \ref{app-figures}. It is straightforward to check that
\begin{eqnarray}
\label{eq-rot-change-Gamma-prime-G} \rot (G(\Gamma)_{\varrho_{\mathsf{f}},\varsigma}) & = & \rot (\Gamma_{\bar{\mathsf{f}}}') + \sum_C t_{\mathsf{f},\varsigma}(C), \\
\label{eq-rot-change-rot-f-G} \rot(G(\Gamma)_{\varrho_{\mathsf{f}},\varsigma}) & = & \rot (\Gamma_{\mathsf{f}}) - \rot (\Delta_{\mathsf{f}}) + \sum_C r_{\mathsf{f},\varsigma}(C),
\end{eqnarray}
where $C$ runs through all local configurations of type one or two in $\Gamma$.

Combining the above and Lemma \ref{lemma-rotation-numbers-oc-reverse}, we have that, for any $\mathsf{f} \in  \mathcal{L}_{\neq0}(\Gamma)$  and $\varsigma \in \Sigma_{\mathsf{f}}(G(\Gamma)_{\varrho_{\mathsf{f}}})$,
\begin{eqnarray}
\label{eq-sigma-13-simplified-1} && \sigma_{1,3}(\Gamma,\mathsf{f}) \\
& = & \mathrm{rot}(\Gamma_{\bar{\mathsf{f}}}) - 3  \mathrm{rot}(\Gamma_{\mathsf{f}}) +  \sum_{v\in V(\Gamma)} [v|\Gamma|\mathsf{f}] \nonumber \\
& = & \mathrm{rot}(\Gamma_{\bar{\mathsf{f}}}') - 3  (\mathrm{rot}(\Gamma_{\mathsf{f}})- 3 \rot (\Delta_{\mathsf{f}})) + \sum_{v\in V(\Gamma)} ([v|\Gamma|\mathsf{f}] -d(v\leadsto v')) \nonumber \\
& = & -2\rot (G(\Gamma)_{\varrho_{\mathsf{f}},\varsigma}) + \sum_C (3r_{\mathsf{f},\varsigma}(C)- t_{\mathsf{f},\varsigma}(C)) +  \sum_{v\in V(\Gamma)} ([v|\Gamma|\mathsf{f}]-d(v\leadsto v')) \nonumber \\
& = & -2\rot (G(\Gamma)_{\varrho_{\mathsf{f}},\varsigma}) +\sum_C \left( 3r_{\mathsf{f},\varsigma}(C)- t_{\mathsf{f},\varsigma}(C) + \sum_{v\in V(C)} ([v|\Gamma|\mathsf{f}]-d(v\leadsto v')) \right), \nonumber
\end{eqnarray}
where $C$ runs through all local configurations of type one or two in $\Gamma$, $V(C)$ is the set of vertices of $C$, and we use the convention that $d(v\leadsto v')=0$ if the vertex $v$ is unchanged in $\Gamma_{\bar{\mathsf{f}}} \leadsto \Gamma_{\bar{\mathsf{f}}}'$. The values of $r_{\mathsf{f},\varsigma}(C)$, $t_{\mathsf{f},\varsigma}(C)$, $\sum_{v\in V(C)} [v|\Gamma|\mathsf{f}]$ and $\sum_{v\in V(C)}d(v\leadsto v'))$ are recorded in Figures \ref{local-weights-type-1-fig} and \ref{local-weights-type-2-fig} in Appendix \ref{app-figures}. One can verify case by case that
\begin{eqnarray}
\label{eq-sigma-13-simplified-2} 
&& 3r_{\mathsf{f},\varsigma}(C)- t_{\mathsf{f},\varsigma}(C) + \sum_{v\in V(C)} ([v|\Gamma|\mathsf{f}]-d(v\leadsto v')) \\
& = & \begin{cases}
1 & \text{if } C \text{ is of type two and } \varsigma \text{ applies } L  \\
 & \text{to the corresponding vertex in } G(\Gamma)_{\varrho_{\mathsf{f}}}, \\
 & \\
-1 & \text{if } C \text{ is of type two and } \varsigma \text{ applies } R \\
& \text{to the corresponding vertex in } G(\Gamma)_{\varrho_{\mathsf{f}}}, \\
& \\
0 & \text{otherwise.}
\end{cases} \nonumber
\end{eqnarray}

Equations \eqref{eq-sigma-13-simplified-1} and \eqref{eq-sigma-13-simplified-2} imply that, for any $\mathsf{f} \in  \mathcal{L}_{\neq0}(\Gamma)$ and $\varsigma \in \Sigma_{\mathsf{f}}(G(\Gamma)_{\varrho_{\mathsf{f}}})$,
\begin{equation}\label{CP-Jaeger-rot-weight-match}
q^{\sigma_{1,3}(\Gamma,\mathsf{f})} = q^{-2\rot (G(\Gamma)_{\varrho_{\mathsf{f}},\varsigma})} \cdot [G(\Gamma)_{\varrho_{\mathsf{f}}},\varsigma].
\end{equation}

In view of bijection \eqref{bijection-labeling-orientation}, it follows from \eqref{eq-MOY-HOMFLY}, \eqref{eq-composition-product-1+3}, \eqref{eq-Gamma-bar-Gamma-bar-prime}, \eqref{eq-MOY-3-3} and \eqref{CP-Jaeger-rot-weight-match} that
\begin{eqnarray}
\label{Jaeger-CP-MOY-4-2} \left\langle \Gamma \right\rangle_4 & = & \sum_{\mathsf{f} \in \mathcal{L}_{\neq0}(\Gamma)} \sum_{\varsigma \in \Sigma_{\mathsf{f}}(G(\Gamma)_{\varrho_{\mathsf{f}}})} q^{-2\rot (G(\Gamma)_{\varrho_{\mathsf{f}},\varsigma})} \cdot [G(\Gamma)_{\varrho_{\mathsf{f}}},\varsigma]  \cdot R_3( G(\Gamma)_{\varrho_{\mathsf{f}},\varsigma}) \\
&= & \sum_{\varrho \in \mathcal{O}(G(\Gamma))} \sum_{\varsigma \in \Sigma(G(\Gamma)_{\varrho})} q^{-2\rot (G(\Gamma)_{\varrho,\varsigma})} \cdot [G(\Gamma)_{\varrho},\varsigma]  \cdot R_3( G(\Gamma)_{\varrho,\varsigma}). \nonumber
\end{eqnarray}
Comparing the right hand side of \eqref{Jaeger-CP-MOY-4-2} to the Jaeger Formula \eqref{eq-Jaeger-formula-N-graph} in Theorem \ref{thm-Jaeger-formula-graph}, we get $\left\langle \Gamma \right\rangle_4 = P_6(G(\Gamma))$.
\end{proof}

\subsection{An explicit $\mathfrak{so}(6)$ Kauffman homology} Webster \cite{Webster1,Webster2} has categorified, for any simple complex Lie algebra $\mathfrak{g}$, the quantum $\mathfrak{g}$ invariant for links colored by any finite dimensional representations of $\mathfrak{g}$. But his categorification is very abstract. For applications in knot theory, it would help if we have categorifications that are concrete and explicit. For quantum $\mathfrak{sl}(N)$ link invariants, examples of such categorifications can be found in \cite{K1,KR1,Wu-color}. We know much less about explicit categorifications of quantum $\mathfrak{so}(N)$ link invariants. Khovanov and Rozansky \cite{KR3} proposed a categorification of the $\mathfrak{so}(2N)$ Kauffman polynomial. But its invariance under Reidemeister move (III) is still open. They did however point out that the $\mathfrak{so}(4)$ version of their homology is isomorphic to the tensor square of the Khovanov homology \cite{K1} and is therefore a link invariant. More recently, Cooper, Hogancamp and Krushkal \cite{Cooper-Hogancamp-Krushkal} gave an explicit categorification of the $\mathfrak{so}(3)$ Kauffman polynomial.

Theorem \ref{thm-mostly-2-colored-MOY-to-4-valent} allows us to give an explicit categorification of the $\mathfrak{so}(6)$ Kauffman polynomial. More precisely, we have the following theorem.

\begin{theorem}\label{thm-2-4-MOY-6-Kauffman}
Let $L$ be an unoriented framed link in $S^3$. Fix an orientation $\varrho$ of $L$, color all components of $L$ by $2$ and denote the resulted colored oriented framed link $L_\varrho^{(2)}$. Then
\begin{equation}\label{eq-2-4-MOY-6-Kauffman}
\widetilde{R}_{4}(L_\varrho^{(2)}) = (-1)^m P_6(\overline{L}),
\end{equation}
where the polynomial $\widetilde{R}_{4}$ is defined in Definition \ref{def-2N-homology-renormalized}, $m$ is the ($\zed_2$-) number of crossings in $L$, and $\overline{L}$ is the mirror image of $L$, that is, $L$ with the upper- and lower- branches switched at every crossing.

Consequently, the renormalized $\mathfrak{sl}(4)$ homology $\widetilde{H}_{4}(L_\varrho^{(2)})$ categorifies $(-1)^m P_6(\overline{L})$.
\end{theorem}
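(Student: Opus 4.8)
The plan is to reduce to the planar case already settled in Theorem \ref{thm-mostly-2-colored-MOY-to-4-valent} by resolving all crossings on both sides of \eqref{eq-2-4-MOY-6-Kauffman}. On the left, I would apply the $\mathfrak{sl}(4)$ skein relations \eqref{MOY-skein-general-+} and \eqref{MOY-skein-general--} at every crossing of $L_\varrho^{(2)}$. Since both strands at each crossing are colored by $2$, each crossing resolves into a $\zed$-linear combination of three mostly $2$-colored MOY graphs, indexed by $k\in\{0,1,2\}$: $k=0$ gives the oriented smoothing, $k=1$ produces a type two local configuration, and $k=2$ produces a type one local configuration. Iterating over all crossings expresses $\langle L_\varrho^{(2)}\rangle_4$ as a state sum over all choices $\varsigma$ of a value $k_c$ at each crossing $c$, each summand being a monomial times $\langle\Gamma_\varsigma\rangle_4$ with $\Gamma_\varsigma$ mostly $2$-colored. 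By Theorem \ref{thm-mostly-2-colored-MOY-to-4-valent}, $\langle\Gamma_\varsigma\rangle_4=P_6(G(\Gamma_\varsigma))$, and by inspection $G(\Gamma_\varsigma)$ is exactly the planar $4$-valent graph obtained from $L$ by replacing each crossing $c$ by (respectively) its oriented smoothing, a rigid $4$-valent vertex, or its disoriented smoothing, according to $k_c$.

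Next I would expand the right-hand side along the same combinatorial pattern. Since $P_6$ is a specialization of $P$, it satisfies the Kauffman skein relation \eqref{Kauffman-skein}, so the Kauffman--Vogel relation \eqref{Kauffman-skein-vertex} lets me write, at each crossing of $\overline L$, the value of $P_6$ at that crossing as a $\zed[q^{\pm1}]$-combination of $P_6$ of its two smoothings and of the rigid vertex obtained by collapsing it. Applying this at every crossing expresses $P_6(\overline L)$ as a state sum over the same index set $\varsigma$, with summands monomials times $P_6$ of the same planar $4$-valent graphs $G(\Gamma_\varsigma)$. The theorem then reduces to a local comparison of coefficients at a single crossing: for a positive crossing the MOY coefficient $(-1)^{2-k}q^{k-2}$ of the $k$-th resolution should agree, up to a uniform factor $q^{-1}$, with the coefficient of the matching term in the Kauffman--Vogel expansion of $P_6$ of the mirrored (negative) crossing, and symmetrically for a negative crossing with uniform factor $q$; in particular the sign $-1$ on the $k=1$ term matches the sign on the vertex term of \eqref{Kauffman-skein-vertex}, and there are no other signs. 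Collecting these monomials over all crossings yields $\langle L_\varrho^{(2)}\rangle_4=q^{-w(L)}P_6(\overline L)$, whence, using Definition \ref{def-2N-homology-renormalized} and $w(L)\equiv m\pmod 2$, $\widetilde R_4(L_\varrho^{(2)})=(-q)^{w(L)}\langle L_\varrho^{(2)}\rangle_4=(-1)^{w(L)}P_6(\overline L)=(-1)^m P_6(\overline L)$. The base of the induction on the number of crossings is the identity $\langle\bigcirc_2\rangle_4=\qb{4}{2}=[5]+1=P_6(\bigcirc)$ together with the fact that both $\langle\cdot\rangle_4$ and $P_6$ are multiplicative under disjoint union.

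For the homology statement I would argue that, by Definition \ref{def-2N-homology-renormalized}, $\widetilde C_4(L_\varrho^{(2)})$ is a $\|\cdot\|\{\cdot\}$-shift of $\hat C_4(L_\varrho^{(2)})$, whose homology has graded Euler characteristic $\langle L_\varrho^{(2)}\rangle_4$ by Part (4) of Theorem \ref{thm-MOY-knotted-invariance}; the shifts $\|-w(L)\|\{q^{w(L)}\}$ multiply the graded Euler characteristic by $(-q)^{w(L)}$, so $\widetilde H_4(L_\varrho^{(2)})$ has graded Euler characteristic $\widetilde R_4(L_\varrho^{(2)})=(-1)^m P_6(\overline L)$. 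By Lemma \ref{lemma-independence-orientation} this homology is independent of $\varrho$, hence an invariant of the unoriented framed link, and by Theorem \ref{thm-MOY-knotted-invariance} it is invariant under regular Reidemeister moves; therefore it categorifies $(-1)^m P_6(\overline L)$.

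I expect the main obstacle to be the bookkeeping in the local coefficient comparison: one must pin down which smoothing of an oriented $2$-colored crossing is the $k=0$ versus the $k=2$ MOY resolution, decide which of the two crossings in \eqref{Kauffman-skein-vertex} carries which power of $q$, track the effect of passing to the mirror $\overline L$ (which interchanges positive and negative crossings, hence \eqref{MOY-skein-general-+} with \eqref{MOY-skein-general--}), and confirm that the accumulated local monomial is precisely $q^{-w(L)}$, so that it is cancelled by the $q^{w(L)}$ inside $(-q)^{w(L)}$ and only the sign $(-1)^m$ survives. None of this is conceptually difficult, but it is where sign and grading errors would most easily enter.
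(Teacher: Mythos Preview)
Your proposal is correct and follows essentially the same approach as the paper: both resolve all crossings via the $\mathfrak{sl}(4)$ MOY skein relations on the left and the Kauffman--Vogel skein relation on the right, match the resulting planar graphs via $\Gamma\mapsto G(\Gamma)$, and invoke Theorem~\ref{thm-mostly-2-colored-MOY-to-4-valent} term by term. The only cosmetic difference is that the paper works directly with $\widetilde R_4$ (so each crossing contributes a clean factor of $-1$, yielding $(-1)^m$ immediately), whereas you work with the unnormalized bracket $\langle\,\cdot\,\rangle_4$, accumulate $q^{-w(L)}$, and then use $w(L)\equiv m\pmod 2$ after multiplying by $(-q)^{w(L)}$; both computations are correct and equivalent.
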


\begin{figure}[ht]
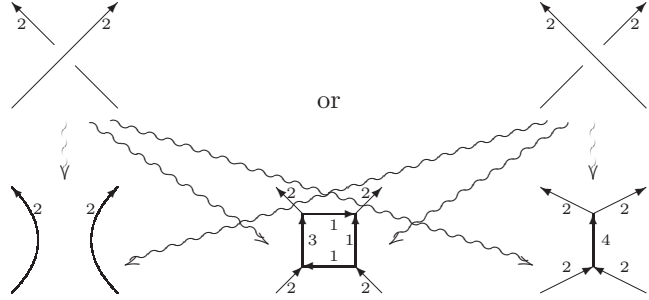

\vspace{-1pc}
\[
\xymatrix{
 \input{crossing-22+} \ar@{~>}[d] \ar@{~>}[drr] \ar@{~>}[drrrr]&& \text{or}  &&  \input{crossing-22-} \ar@{~>}[d] \ar@{~>}[dll] \ar@{~>}[dllll] \\
 \input{regular-smoothing} && \input{box} && \input{wide-edge-2} 
}
\]
\caption{MOY resolutions}\label{MOY-resolutions-figure}

\end{figure}

\begin{figure}[ht]
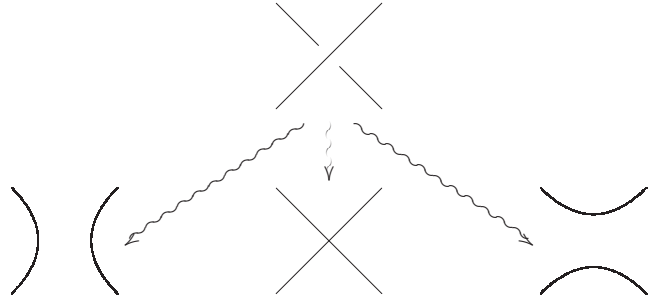

\vspace{-1pc}
\[
\xymatrix{
  &&  \input{crossing-unoriented} \ar@{~>}[d] \ar@{~>}[drr] \ar@{~>}[dll] &&   \\
 \input{regular-smoothing-v} && \input{vertex-unoriented-2} && \input{regular-smoothing-h}
}
\]
\caption{KV resolutions}\label{KV-resolutions-figure}

\end{figure}

\begin{proof}
Replace every crossing of $L_\varrho^{(2)}$ by one of the three local configurations in Figure \ref{MOY-resolutions-figure}. We call the result a MOY resolution of $L_\varrho^{(2)}$ and denote by $\mathcal{MOY}(L_\varrho^{(2)})$ the set of MOY resolutions of $L_\varrho^{(2)}$. Replace every crossing of $L$ by one of the three local configurations in Figure \ref{KV-resolutions-figure}. We call the result a KV resolution of $L$ and denote by $\mathcal{KV}(L)$ the set of KV resolutions of $L$. Note that the mapping $\Gamma \mapsto G(\Gamma)$ defined in Theorem \ref{thm-mostly-2-colored-MOY-to-4-valent} gives a bijection $\mathcal{MOY}(L_\varrho^{(2)}) \rightarrow \mathcal{KV}(L)$.

Compare the skein relations  \vspace{-1pc}
\begin{eqnarray*}
\widetilde{R}_{4} (\setlength{\unitlength}{2pt}
\begin{picture}(20,20)(-10,7)

\put(-10,0){\vector(1,1){20}}

\put(10,0){\line(-1,1){8}}

\put(-2,12){\vector(-1,1){8}}

\put(7,15){\tiny{$2$}}

\put(-9,15){\tiny{$2$}}

\end{picture})  & = & -q^{-1} \widetilde{R}_{4} (\setlength{\unitlength}{2pt}
\begin{picture}(20,20)(-10,7)

\qbezier(-10,0)(0,10)(-10,20)

\qbezier(10,0)(0,10)(10,20)

\put(-10,20){\vector(-1,1){0}}

\put(10,20){\vector(1,1){0}}

\put(4,15){\tiny{$2$}}

\put(-6,15){\tiny{$2$}}

\end{picture}) + \widetilde{R}_{4} (\input{box-slides}) - q \widetilde{R}_{4} (\setlength{\unitlength}{2pt}
\begin{picture}(20,20)(-10,7)
\put(0,15){\vector(2,1){10}}

\put(-10,0){\vector(2,1){10}}

\put(0,15){\vector(-2,1){10}}

\put(10,0){\vector(-2,1){10}}

\put(0,5){\vector(0,1){10}}

\put(1.5,9){\tiny{$4$}}

\put(5,15){\tiny{$2$}}

\put(5,4){\tiny{$2$}}

\put(-6,15){\tiny{$2$}}

\put(-6,4){\tiny{$2$}}

\end{picture}), \\
\widetilde{R}_{4} (\setlength{\unitlength}{2pt}
\begin{picture}(20,20)(-10,7)

\put(-10,0){\vector(1,1){20}}

\put(2,8){\vector(1,-1){8}}

\put(-2,12){\line(-1,1){8}}

\put(7,15){\tiny{$2$}}

\put(-9,15){\tiny{$2$}}

\end{picture}) & = & -q^{-1} \widetilde{R}_{4} (\setlength{\unitlength}{2pt}
\begin{picture}(20,20)(-10,7)
\put(-10,0){\vector(1,2){5}}

\put(-10,20){\vector(1,-2){5}}

\put(-5,10){\vector(1,0){10}}

\put(5,10){\vector(1,2){5}}

\put(5,10){\vector(1,-2){5}}

\put(0,11){\tiny{$4$}}

\put(5,15){\tiny{$2$}}

\put(5,4){\tiny{$2$}}

\put(-6,15){\tiny{$2$}}

\put(-6,4){\tiny{$2$}}

\end{picture}) + \widetilde{R}_{4} (\input{box-slides-r}) - q \widetilde{R}_{4} (\setlength{\unitlength}{2pt}
\begin{picture}(20,20)(-10,7)

\qbezier(-10,0)(0,10)(10,0)

\qbezier(-10,20)(0,10)(10,20)

\put(10,20){\vector(1,1){0}}

\put(10,0){\vector(1,-1){0}}

\put(4,12){\tiny{$2$}}

\put(4,5){\tiny{$2$}}

\end{picture}), \\
P_6 (\setlength{\unitlength}{2pt}
\begin{picture}(20,20)(-10,7)

\put(10,0){\line(-1,1){20}}

\put(-10,0){\line(1,1){8}}

\put(2,12){\line(1,1){8}}

\end{picture}) & = & q^{-1}P_6(\setlength{\unitlength}{2pt}
\begin{picture}(20,20)(-10,7)

\qbezier(-10,0)(0,10)(-10,20)

\qbezier(10,0)(0,10)(10,20)

\end{picture}) - P_6(\setlength{\unitlength}{2pt}
\begin{picture}(20,20)(-10,7)
\put(0,10){\line(1,1){10}}

\put(-10,0){\line(1,1){10}}

\put(0,10){\line(-1,1){10}}

\put(10,0){\line(-1,1){10}}

\end{picture}) + qP_6(\setlength{\unitlength}{2pt}
\begin{picture}(20,20)(-10,7)

\qbezier(-10,0)(0,10)(10,0)

\qbezier(-10,20)(0,10)(10,20)

\end{picture}).
\end{eqnarray*}

\vspace{1pc}

\noindent It is easy to see that equation \eqref{eq-2-4-MOY-6-Kauffman} follows from equation \eqref{eq-mostly-2-colored-MOY-to-4-valent} in Theorem \ref{thm-mostly-2-colored-MOY-to-4-valent}.
\end{proof}

\begin{question}
Is $\widetilde{H}_{4}(L_\varrho^{(2)})$ isomorphic to the $\mathfrak{so}(6)$ version of the homology defined in \cite{KR3}?
\end{question}

\appendix

\section{Figures Used in the Proof of Theorem \ref{thm-mostly-2-colored-MOY-to-4-valent}}\label{app-figures}

In Figures \ref{local-weights-type-1-fig} and \ref{local-weights-type-2-fig}, edges along \textcolor{BrickRed}{red paths} belong to $\Gamma_{\mathsf{f}}$ and edges along \textcolor{SkyBlue}{blue paths} belong to $\Delta_{\mathsf{f}}$.

\begin{figure}[ht]
\[
\xymatrix{
\Gamma \ar@{~>}[rr]^{\sum [v|\Gamma|\mathsf{f}]} && \Gamma_{\bar{\mathsf{f}}} \ar@{~>}[rr]^{\sum d(v\leadsto v')} && \Gamma_{\bar{\mathsf{f}}}' \ar@{~>}[rr]^{(t_{\mathsf{f},\varsigma},r_{\mathsf{f},\varsigma})} && G(\Gamma)_{\varrho_{\mathsf{f}},\varsigma} \\
\input{wide-edge-2-ll} \ar@{~>}[rr]^{2} && \input{wide-edge-2-ll-1} \ar@{~>}[rr]^{-1} && \input{wide-edge-2-ll-2} \ar@{~>}[rr]^{(0,-1)} && \input{wide-edge-2-ll-3} \\
\input{wide-edge-2-rr} \ar@{~>}[rr]^{-2} && \input{wide-edge-2-rr-1} \ar@{~>}[rr]^{1} && \input{wide-edge-2-rr-2} \ar@{~>}[rr]^{(0,1)} && \input{wide-edge-2-rr-3} \\
\input{wide-edge-2-lr} \ar@{~>}[rr]^{0} && \input{wide-edge-2-lr-1} \ar@{~>}[rr]^{0} && \input{wide-edge-2-lr-2} \ar@{~>}[rr]^{(0,0)} && \input{wide-edge-2-lr-3} \\
\input{wide-edge-2-rl} \ar@{~>}[rr]^{0} && \input{wide-edge-2-rl-1} \ar@{~>}[rr]^{0} && \input{wide-edge-2-rl-2} \ar@{~>}[rr]^{(0,0)} && \input{wide-edge-2-rl-3}
}
\]
\caption{}\label{local-weights-type-1-fig}

\end{figure}

\begin{figure}[ht]
\[
\xymatrix{
\Gamma \ar@{~>}[rr]^{\sum [v|\Gamma|\mathsf{f}]} && \Gamma_{\bar{\mathsf{f}}} \ar@{~>}[rr]^{\sum d(v\leadsto v')} && \Gamma_{\bar{\mathsf{f}}}' \ar@{~>}[rr]^{(t_{\mathsf{f},\varsigma},r_{\mathsf{f},\varsigma})} && G(\Gamma)_{\varrho_{\mathsf{f}},\varsigma} \\
\input{box-llrr-null} \ar@{~>}[rr]^{0} && \input{box-llrr-null-1} \ar@{~>}[rr]^{0} && \input{box-llrr-null-2} \ar@{~>}[rr]^{(0,0)} && \input{box-llrr-null-3} \\
\input{box-null-llrr} \ar@{~>}[rr]^{0} && \input{box-null-llrr-1} \ar@{~>}[rr]^{0} && \input{box-null-llrr-2} \ar@{~>}[rr]^{(0,0)} && \input{box-null-llrr-3} \\
\input{box-lr-rl} \ar@{~>}[rr]^{0} && \input{box-lr-rl-1} \ar@{~>}[rr]^{0} && \input{box-lr-rl-2} \ar@{~>}[rr]^{(0,0)} && \input{box-lr-rl-3} \\
\input{box-rl-lr} \ar@{~>}[rr]^{0} && \input{box-rl-lr-1} \ar@{~>}[rr]^{0} && \input{box-rl-lr-2} \ar@{~>}[rr]^{(0,0)} && \input{box-rl-lr-3} \\
\input{box-rr-ll} \ar@{~>}[rr]^{-1} && \input{box-rr-ll-1} \ar@{~>}[rr]^{1} && \input{box-rr-ll-2} \ar@{~>}[rr]^{(0,1)} && \input{box-rr-ll-3} \\
\input{box-rllr-ll} \ar@{~>}[rr]^{-1} && \input{box-rllr-ll-1} \ar@{~>}[rr]^{0} && \input{box-rllr-ll-2} \ar@{~>}[rr]^{(0,0)} && \input{box-rllr-ll-3} \\
\input{box-ll-rllr} \ar@{~>}[rr]^{1} && \input{box-ll-rllr-1} \ar@{~>}[rr]^{0} && \input{box-ll-rllr-2} \ar@{~>}[rr]^{(0,0)} \ar@{~>}[drr]^{(-1,-1)}  && \input{box-ll-rllr-3-left} \\
&&&&&& \input{box-ll-rllr-3-right}
}
\]
\caption{}\label{local-weights-type-2-fig}

\end{figure}

\end{document}